\newtheorem{thm}{Theorem}[section]
\newtheorem{lem}[thm]{Lemma}
\newtheorem{cor}[thm]{Corollary}
\newtheorem{prop}[thm]{Proposition}
\theoremstyle{definition}
\newtheorem{example}[thm]{Example}
\theoremstyle{definition}
\theoremstyle{definition}
\newtheorem{defn}[thm]{Definition}
\theoremstyle{definition}
\newtheorem{remark}[thm]{Remark}
\newcommand{\LG}{VN(G)}
\newcommand{\LO}{L^1(G)}
\newcommand{\LOQ}{L^1(\mathbb{G})}
\newcommand{\LOQH}{L^1(\widehat{\mathbb{G}})}
\newcommand{\LOQHP}{L^1(\widehat{\mathbb{G}}')}
\newcommand{\LTQ}{L^2(\mathbb{G})}
\newcommand{\LI}{L^{\infty}(G)}
\newcommand{\LIQ}{L^{\infty}(\mathbb{G})}
\newcommand{\LIQH}{L^{\infty}(\widehat{\mathbb{G}})}
\newcommand{\LIQHP}{L^{\infty}(\widehat{\mathbb{G}}')}
\newcommand{\BH}{\mc{B}(H)}
\newcommand{\LUC}{\mathrm{LUC}(\mathbb{G})}
\newcommand{\RUC}{\mathrm{RUC}(\mathbb{G})}
\newcommand{\BLT}{\mc{B}(L^2(G))}
\newcommand{\BLTQ}{\mc{B}(L^2(\mathbb{G}))}
\newcommand{\McbQl}{M_{cb}^l(L^1(\mathbb{G}))}
\newcommand{\McbQr}{M_{cb}^r(L^1(\mathbb{G}))}
\newcommand{\ten}{\otimes}
\newcommand{\oten}{\overline{\otimes}}
\newcommand{\pten}{\widehat{\otimes}}
\newcommand{\iten}{\otimes^{\vee}}
\newcommand{\vphi}{\varphi}
\newcommand{\G}{\mathbb{G}}
\newcommand{\C}{\mathbb{C}}
\newcommand{\N}{\mathbb{N}}
\newcommand{\R}{\mathbb{R}}
\newcommand{\Amod}{A\hskip2pt\mathbf{mod}}
\newcommand{\LOQmod}{L^1(\mathbb{G})\hskip2pt\mathbf{mod}}
\newcommand{\modA}{\mathbf{mod}\hskip2pt A}
\newcommand{\modLOQ}{\mathbf{mod}\hskip2pt L^1(\mathbb{G})}
\DeclareSymbolFont{lettersA}{U}{txmia}{m}{it}
\DeclareMathSymbol{\W}{\mathord}{lettersA}{151}
\newcommand{\ep}{\varepsilon}
\newcommand{\al}{\alpha}
\newcommand{\lm}{\lambda}
\newcommand{\Gam}{\Gamma}
\newcommand{\Hom}{\mathrm{Hom}}
\newcommand{\Op}{\mathbf{Op}}
\newcommand{\Ban}{\mathbf{Ban}}
\newcommand{\Ad}{\mathrm{Ad}}
\newcommand{\id}{\textnormal{id}}
\providecommand{\norm}[1]{\lVert#1\rVert}
\newcommand{\h}[1]{\widehat{#1}}
\newcommand{\wh}[1]{\widehat{#1}}
\newcommand{\mc}[1]{\mathcal{#1}}
\newcommand{\e}[1]{\emph{#1}}
\newcommand{\la}{\langle}
\newcommand{\ra}{\rangle}
\newcommand{\tr}{\mathrm{tr}}
\newcommand{\rmv}[1]{}
\newcommand{\hs}{\hskip10pt}
\newcommand{\quo}{\twoheadrightarrow}
\begin{document}

\title[Towards a local theory of operator modules]{Finite presentation, the local lifting property, and local approximation properties of operator modules}
\author{Jason Crann}
\email{jasoncrann@cunet.carleton.ca}
\address{School of Mathematics and Statistics, Carleton University, Ottawa, ON, Canada K1S 5B6}

\keywords{Categorical functional analysis; operator modules; locally compact quantum groups; approximation properties.}
\subjclass[2010]{Primary 46M18, 46L07; Secondary 43A95, 46L89.}

\begin{abstract} We introduce notions of finite presentation and co-exactness which serve as qualitative and quantitative analogues of finite-dimensionality for operator modules over completely contractive Banach algebras. With these notions we begin the development of a local theory of operator modules by introducing analogues of the local lifting property, nuclearity, and semi-discreteness. For a large class of operator modules we prove that the local lifting property is equivalent to flatness, generalizing the operator space result of Kye and Ruan \cite{KR}. We pursue applications to abstract harmonic analysis, where, for a locally compact quantum group $\G$, we show that $\LOQ$-nuclearity of $\mathrm{LUC}(\G)$ and $\LOQ$-semi-discreteness of $\LIQ$ are both equivalent to co-amenability of $\G$. We establish the equivalence between $A(G)$-injectivity of $G\bar{\ltimes}M$, $A(G)$-semi-discreteness of $G\bar{\ltimes} M$, and amenability of $W^*$-dynamical systems $(M,G,\alpha)$ with $M$ injective. We end with remarks on future directions.\end{abstract}

\begin{spacing}{1.0}

\maketitle

\section{Introduction} The local theory of operator spaces, like its Banach space counterpart, aims to study global properties of a space through the structure of its finite-dimensional subspaces. In this context, local properties often translate into homological properties of functors on associated categories. For instance, exactness of a $C^*$-algebra $A$, defined by exactness of the functor $A\iten(\cdot)$, is equivalent to approximate factorizations of an inclusion $A\subseteq\BH$ through full matrix algebras. In fact, some of the deepest theorems in operator algebras concern the equivalence between local and homological characterizations of a certain property. The equivalence of injectivity and approximate finite-dimensionality for von Neumann algebras \cite{Connes} being a prominent example. 

In the setting of operator \textit{modules}, the author's recent work \cite{CN,C,C2,C3} characterizes important properties of a locally compact quantum group $\G$ in terms of homological properties of various operator modules over the convolution algebra $\LOQ$ (or its dual). For instance, $\G$ is amenable if and only if the dual von Neumann algebra $\LIQH$ is injective over $\LOQH$ \cite[Theorem 5.1]{C}. A natural question is whether one can characterize this quantum group injectivity in terms of a ``local property'' inside a relevant module category, analogous to semi-discreteness. Having local characterizations of such homological properties should prove beneficial for the future development of harmonic analysis on locally compact quantum groups.

Motivated by the above (and related) question(s), we introduce a suitable generalization of finite-dimensionality to the context of operator modules through a notion of \textit{(topological) finite presentation}. Analogous to the purely algebraic setting, we define (topological) finite presentation through (topologically) exact sequences of certain free modules. After establishing basic properties and examples, we show that every operator module in $\Amod$ is a direct limit of topologically finitely presented ones.

Through finite presentation we introduce an operator module analogue of the local lifting property. We prove, for a large class of completely contractive Banach algebras $A$, that flatness of an operator module $X\in\Amod$ is equivalent to the local lifting property (see Theorem \ref{t:LLP}). This generalizes the corresponding result for operator spaces by Kye and Ruan \cite[Theorem 5.5]{KR}. Our class includes any $A$ which is the predual of a von Neumann algebra, e.g. $A=\LOQ$. Thus, as a corollary, a locally compact quantum group $\G$ is amenable if and only if $\LOQH$ has the local lifting property in $\LOQH\hskip2pt\mathbf{mod}$ (see Corollary \ref{c:1}), a result which is new even for groups.

We then initiate an investigation into local approximation properties for operator modules, introducing analogues of nuclearity and semi-discreteness. Any nuclear module necessarily has the weak expectation property in the sense of \cite{BC2}, and any semi-discrete module is necessarily injective. Specializing to the setting of locally compact quantum groups, we establish:

\begin{thm}\label{t:intro} Let $\G$ be a locally compact quantum group. The following conditions are equivalent:
\begin{enumerate}
\item $\G$ is co-amenable;
\item $\LIQ$ is semi-discrete in $\modLOQ_1$;
\item $\LIQ$ is semi-discrete in $\LOQmod_1$;
\item $\mathrm{RUC}(\G)=\la\LOQ\star\LIQ\ra$ is nuclear in $\modLOQ_1$;
\item $\mathrm{LUC}(\G)=\la\LIQ\star\LOQ\ra$ is nuclear in $\LOQmod_1$;
\item The inclusion $C_0(\G)\hookrightarrow C_0(\G)^{**}$ is nuclear in $\modLOQ_1$;
\item The inclusion $C_0(\G)\hookrightarrow C_0(\G)^{**}$ is nuclear in $\LOQmod_1$.
\end{enumerate}
\end{thm}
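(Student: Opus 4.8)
The plan is to prove the cycle of implications $(1)\Rightarrow(3)\Rightarrow(5)\Rightarrow(7)\Rightarrow(1)$ together with the parallel chain $(1)\Rightarrow(2)\Rightarrow(4)\Rightarrow(6)\Rightarrow(1)$, using the left/right symmetry of the quantum-group structure so that the two chains are formally dual under the flip. I would begin by recalling that co-amenability of $\G$ is equivalent to the existence of a (bounded) approximate identity in $\LOQ$ for its canonical bornological module structure, equivalently to the counit $\epsilon$ on $C_0(\G)$ extending to a state that is a one-sided identity for the $L^1$-action; this is the statement that $C_0(\G)$ is a co-amenable operator module, and it is the entry point from which all the semi-discreteness/nuclearity statements will be derived.

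For the implication $(1)\Rightarrow(2)/(3)$, I would show that semi-discreteness of $\LIQ$ — meaning the identity map on $\LIQ$ factors approximately, in the point-weak$^*$ stable-point sense and completely contractively, through the ``finitely presented'' free modules $\LOQ\pten M_n$ built in the earlier sections — follows by combining the trivial factorization $\LIQ \xrightarrow{\id} \LIQ$ through $\LIQ$ itself with a co-amenable approximate identity $(e_i)\subseteq\LOQ$: the maps $x\mapsto e_i\star x$ provide the needed module approximation, and one pushes these through finite-rank completely positive maps on $\LIQ$ using Haagerup's standard approximation of $\id_{\LIQ}$ (here $\LIQ$ is a von Neumann algebra, hence semi-discrete as an operator space in the usual sense, and the module structure over $\LOQ_1$ is what the approximate identity repairs). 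The nuclearity statements $(4),(5),(6),(7)$ for $\RUC(\G)=\langle\LOQ\star\LIQ\rangle$, $\LUC(\G)$, and $C_0(\G)\hookrightarrow C_0(\G)^{**}$ follow from the semi-discreteness of $\LIQ$ by restricting the factorization maps: since $\RUC(\G)$ and $\LUC(\G)$ are complemented $\LOQ$-submodules of $\LIQ$ (or of $C_0(\G)^{**}$) and $C_0(\G)^{**}$ sits inside $\LIQ$, one intertwines the approximate factorization with the relevant (completely contractive, module) projections and inclusions; that co-amenability makes these projections module maps is exactly the content of $(1)$.

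The reverse implications $(7)\Rightarrow(1)$ and $(6)\Rightarrow(1)$ are where the argument has real content. Here I would use that nuclearity of the inclusion $C_0(\G)\hookrightarrow C_0(\G)^{**}$ in $\modLOQ_1$ supplies, for each finite set and $\epsilon>0$, an approximate factorization $C_0(\G)\xrightarrow{\ \Phi\ }\LOQ\pten M_n\xrightarrow{\ \Psi\ } C_0(\G)^{**}$ through a finitely presented free module with $\Psi\circ\Phi$ close to the inclusion; evaluating against the counit $\epsilon\in C_0(\G)^*$ and tracking the $\LOQ$-module structure, the composition produces states on $C_0(\G)$ that are asymptotically invariant under the $\LOQ$-action, i.e.\ a net witnessing co-amenability. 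Equivalently — and this is probably the cleaner route — nuclearity forces the inclusion $C_0(\G)\hookrightarrow C_0(\G)^{**}$ to be $\LOQ$-weakly nuclear, hence $C_0(\G)$ has the module weak expectation property relative to $C_0(\G)^{**}$, which by the earlier remark (nuclearity $\Rightarrow$ WEP) gives a completely positive $\LOQ$-module projection $C_0(\G)^{**}\to C_0(\G)$ (or rather a conditional-expectation-type map landing in the multiplier algebra), and such a module projection is well known to be equivalent to co-amenability of $\G$. The analogous step closes the loop from $(2)$ via semi-discreteness $\Rightarrow$ injectivity of $\LIQ$ in $\modLOQ_1$, which by a module Tomiyama-type argument yields an $\LOQ$-module expectation onto the image of $C_0(\G)$, again equivalent to co-amenability.

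The main obstacle I anticipate is the last paragraph: extracting a genuine invariant mean or bounded approximate identity from an \emph{approximate} completely bounded module factorization through the abstract free modules $\LOQ\pten M_n$ requires care, because the intermediate objects are not von Neumann algebras and the ``evaluation at the counit'' trick must be performed so that the resulting net of states is uniformly bounded and the module-invariance is not lost in the limit. I would handle this by working in the point-weak$^*$ topology on $\mathrm{CB}$, using the stability of the factorization maps (so that $\Psi\circ\Phi$ is not merely close to $\id$ but asymptotically a module map), and invoking the characterization of co-amenability via existence of a right-invariant mean on $\LUC(\G)$ that is simultaneously a $\LOQ$-module map — precisely the object that the nuclearity of $\LUC(\G)$ manufactures in the limit. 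Everything else (the forward implications and the transfer between the various modules $\LIQ$, $\RUC(\G)$, $\LUC(\G)$, $C_0(\G)^{**}$) should be routine manipulation with completely contractive module maps and the standard left/right leg notation for $\G$.
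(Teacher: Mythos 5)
Your converse implications are where the proposal breaks down, and the failure is structural rather than technical. Every route you sketch for $(2)\Rightarrow(1)$, $(6)\Rightarrow(1)$, $(7)\Rightarrow(1)$ passes through an object that only witnesses amenability of $\h{\G}$, not co-amenability of $\G$: semi-discreteness gives injectivity of $\LIQ$ in $\modLOQ_1$, which is equivalent to amenability of $\h{\G}$ by \cite[Theorem 5.1]{C}; nuclearity gives the $\LOQ$-WEP of $C_0(\G)$, which again only yields amenability of $\h{\G}$ (Proposition \ref{p:Wepamen}); and a ``module expectation'' or an ``invariant mean on $\LUC$'' are once more amenability-type objects. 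Whether amenability of $\h{\G}$ implies co-amenability of $\G$ is a long-standing open problem (see the remark following Theorem \ref{t:coamenequiv}), so any argument that discards the factorization after extracting injectivity or the WEP cannot close the loop. The proposed ``evaluation against the counit $\epsilon\in C_0(\G)^*$'' is moreover circular, since the reduced counit is bounded precisely when $\G$ is co-amenable.

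The missing idea is a multiplier analysis of the factorization itself. In the paper, injectivity/WEP is used only as a stepping stone to the isometric identification $\McbQr\cong C_0^u(\G)^*$; the $\LOQ$-module property then forces each matrix entry of $r_\alpha$ and $s_\alpha$ --- maps into and out of $M_{n_\alpha}(\LIQ\oplus_\infty\C)$; note that the intermediaries in Definition \ref{d:nuc} are the co-free modules $M_n(A_+^*)$, not the free modules $A_+\pten T_n$ nor your $\LOQ\pten M_n$ --- to be given by convolution against elements of $\LOQ$ (resp.\ $M(\G)$), so that $s_\alpha\circ r_\alpha$ is convolution by a single $f_\alpha$ with $\norm{f_\alpha}\leq\norm{s_\alpha\circ r_\alpha}_{cb}\leq1$; convergence to the identity then yields a bounded approximate identity, i.e.\ co-amenability. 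Your forward direction is also partly off-track: the finite-rank completely positive maps coming from von Neumann algebra semi-discreteness of $\LIQ$ are not $\LOQ$-module maps, whereas a contractive approximate identity already factors $\id_{\LIQ}$ through $A_+^*$ itself (Proposition \ref{p:nuc}), so no Haagerup-type approximation is needed.
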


In the setting of $W^*$-dynamical systems $(M,G,\alpha)$ with $M$ is injective (as a von Neumann algebra), we show the equivalence between $A(G)$-injectivity of $G\bar{\ltimes}M$, $A(G)$-semi-discreteness of $G\bar{\ltimes} M$, and amenability of $(M,G,\alpha)$, where $A(G)$ is the Fourier algebra of $G$ acting on $G\bar{\ltimes}M$ through the dual co-action.

Our considerations also lead naturally to an operator module analogue of co-exactness \cite[\S4.5]{Webster}, which measures the completely bounded Banach--Mazur distance from a module to a quotient of a finitely generated ``matrically free'' module of the form $A_+\pten T_n$. For a large class of finitely presented modules $E\in\Amod$, we show that $E$ is $\lm$-co-exact if and only if for any family $(X_i)_{i\in I}$ in $\modA$, and any free ultrafilter $\mc{U}$ on $I$,
$$\bigg(\prod_{i\in I} X_i/\mc{U}\bigg)\pten_A E\cong_\lm\prod_{i\in I} (X_i\pten_A E)/\mc{U},$$
where $\pten_A$ is the projective module tensor product (see Theorem \ref{t:ul}). This is an operator module generalization of Dong's characterization of co-exactness \cite[Theorem 2.2]{Dong2}, which itself is a dual analogue of Pisier's ultraproduct characterization of exactness \cite[Proposition 6]{Pisier}.

The paper is structured as follows. Section \ref{s:prelims} contains the necessary preliminaries on operator spaces, categorical notions and the theory of locally compact quantum groups. Section \ref{s:fp} is devoted to our notion of (topological) finite presentation, its basic properties and examples. Section \ref{s:LLP} introduces the local lifting property (LLP) for modules and establishes the equivalence with flatness. Section \ref{s:nuc} contains the definitions of nuclearity and semi-discreteness, along with basic properties and examples arising from abstract harmonic analysis. Section \ref{s:dyn} contains the proof of the equivalence between $A(G)$-injectivity and $A(G)$-semi-discretenss for crossed products of $W^*$-dynamical systems. Section \ref{s:coexact} concerns our generalization of co-exactness, and the aforementioned ultraproduct characterization.

Several natural lines of investigation are suggested by this work. We therefore finish with concluding remarks on future directions.

\section{Preliminaries}\label{s:prelims}

%The material in this and the following 2 subsections comprise the basic concepts and results from the homology of operator modules necessary for our work. We refer the reader to \cite{CLM}, (Wood), \cite{Blech} \cite[\S3]{BLM}. List references according to topic.

%The homology theory of Banach modules (see, e.g., Helemskii) was generalized the setting of operator modules by Wood (reference). The main focus of this theory has been relative homology, that is, studying homological/categorical concepts of operator modules relative to their underlying operator space structure.

%The majority of the literature surrounding the homology of operator modules is 
%Regarding operator modules over completely contractive Banach algebras, we follow the terminology of . 

\subsection{Infinite Matrices and Tensor Products}\label{ss:op}

Given an index set $I$ and an operator space $X$, the space $M_I(X)$ consists of matrices $[x_{i,j}]$ indexed by $i,j\in I$ whose finite submatrices are uniformly bounded, that is,  
$$\sup_{F\subseteq I, |F|<\infty}\{\norm{[x_{i,j}]}\mid i,j\in F\}<\infty.$$
When $I=\{1,...,n\}$, we use the subscript $n$ in lieu of $I$, as usual. Under the canonical identification $M_n(M_I(X))=M_I(M_n(X))$ (by interchanging indicies), $M_I(X)$ becomes an operator space. We write $K_I(X)$ for the closure of the space of finitely supported matrices in $M_I(X)$, i.e., those matrices with only a finite number of nonzero
entries. We write $T_I(X)$ for the subspace of $M_I(X)$ consisting of matrices such that
$$\norm{[x_{i,j}]}_1:=\sup_{F\subseteq I, |F|<\infty}\{\norm{[x_{i,j}]}_1\mid i,j\in F\}<\infty,$$
where $\norm{\cdot}_1$ is the 1-norm on $M_{|F|}(X)$ (see \cite[\S4.1]{ER}).

When $X=\C$, we have 
$$M_I:=M_I(\C)=\mc{B}(\ell^2_I); \ \ K_I:=K_I(\C)=\mc{K}(\ell^2_I); \ \ T_I:=T_I(\C)=\mc{T}(\ell^2_I),$$
that is, we recover the space of bounded, compact, and trace class operators on the Hilbert space $\ell^2_I$, respectively. 

For general operator spaces $X$, we have the following canonical identifications 
\begin{itemize}
\item $M_I(X)=\mc{CB}(T_I,X)$ (see \cite[Proposition 1.5.14 (10)]{BLM});
\item $K_I(X)=X\iten K_I$, where $\iten$ is the injective operator space tensor product (see \cite[Equation (1.37)]{BLM});
\item $T_I(X)= X\pten T_I$, where $\pten$ is the projective operator space tensor product (see \cite[Theorem 10.1.3]{ER} for the case $I=\N$, the general case follows similarly (as mentioned in \cite[\S10.1]{ER})).
\end{itemize}

In addition, the canonical duality relations $K_I(X)^*\cong T_I(X^*)$ and $T_I(X)^*\cong M_I(X^*)$ hold (see \cite[Theorem 10.1.4]{ER} for the case $I=\N$). For further details we refer the reader to \cite[\S10.1]{ER} and \cite[\S1.2.26]{BLM}.

\subsection{Basic Categorical Notions in $\Amod$}\label{ss:catnotions}
Throughout this paper, unless otherwise specified, $A$ is a completely contractive Banach algebra, meaning that $A$ is a Banach algebra for which multiplication extends to a complete contraction $m_A:A\pten A\rightarrow A$. Equivalently,
$$\norm{[a_{i,j}b_{k,l}]}_{mn}\leq\norm{[a_{i,j}]}_{m}\norm{[b_{k,l}]}_n, \ \ \ [a_{i,j}]\in M_m(A), \ [b_{k,l}]\in M_n(A).$$
An operator space $X$ is a left \e{operator $A $-module} if it is a left Banach $ A  $-module such that the module map $m_X:A\pten X\rightarrow X$ is completely contractive. We say that $X$ is \e{faithful} if for every non-zero $x\in X$, there is $a\in A  $ such that $a\cdot x\neq 0$, and we say that $X$ is \e{essential} if $\la A\cdot X \ra=X$, where $\la\cdot\ra$ denotes the closed linear span. We denote by $\Amod$ (respectively, $\Amod_1$) the category of left operator $ A  $-modules with morphisms given by completely bounded (respectively, contractive) module homomorphisms. If $A$ is unital, $X\in\Amod$ is \textit{unital} if $1_A\cdot x=x$ for all $x\in X$. We let $A\hskip2pt\mathbf{unmod}$ denote the resulting subcategory of unital modules in $\Amod$. Given $X,Y$ in $\Amod$, we write the space of morphisms from $X$ to $Y$ as $\Hom(X,Y):=_{A}\mc{CB}(X,Y)$. Right operator $ A  $-modules are defined similarly, and we denote the resulting categories by $\modA$ and $\modA_1$. In $\modA$ we have $\Hom(X,Y)=\mc{CB}_{A}(X,Y)$. We often restrict attention to $\Amod$, the corresponding notions in $\modA$ naturally carrying over. When $A=\C$ we recover the category $\Op$ of operator spaces and completely bounded mappings.  

The operator space dual $X^*$ of any $X\in\Amod$ becomes a right operator $A$-module in the canonical fashion:
\begin{equation}\label{e:dualaction}\la f\cdot a, x\ra=\la f, a\cdot x\ra, \ \ \ f\in X^*, \ a\in A, \ x\in X.\end{equation}
See \cite[Page 309]{ER} for details. $Y\in\modA$ is a \textit{dual (right) operator $ A  $-module} if there exists $X\in\Amod$ for which $Y=X^*$ with the above module structure. Similarly for left modules.

The following terminology will be used throughout the paper. A morphism $\vphi:X\rightarrow Y$ in $\Amod$ is: 
\begin{itemize}
\item a \textit{complete $\lm$-embedding}, often denoted $\hookrightarrow_\lm$, if $\norm{[x_{i,j}]}\leq\lm\norm{[\vphi(x_{i,j})]}$ for all $[x_{i,j}]\in M_n(X)$, $n\in\N$;
\item a \textit{complete $\lm$-quotient} map, often denoted $\twoheadrightarrow_\lm$, if $\vphi_n(M_n(X)_{\norm{\cdot}<\lm})\supseteq M_n(Y)_{\norm{\cdot}<1}$ for every $n\in\N$ (equivalently, the induced map $\widetilde{\vphi}:X/\mathrm{Ker}(\vphi)\rightarrow Y$ is bijective with $\norm{\widetilde{\vphi}^{-1}}_{cb}\leq \lm$);
\item a \textit{$\lm$-strict} morphism if $\vphi(X)$ is closed and the induced map $\widetilde{\vphi}:X/\mathrm{Ker}(\vphi)\rightarrow \vphi(X)$ satisfies $\norm{\widetilde{\vphi}^{-1}}_{cb}\leq\lm$.
\end{itemize}

Let $X\in\modA$ and $Y\in\Amod$. The \e{$ A  $-module tensor product} of $X$ and $Y$ is the quotient space $X\pten_{ A  }Y:=X\pten Y/N$, where
$$N=\la x\cdot a\ten y-x\ten a\cdot y\mid x\in X, \ y\in Y, \ a\in A  \ra,$$
and, again, $\la\cdot\ra$ denotes the closed linear span. It follows that
$$\mc{CB}(X,Y^*)_{ A  }\cong N^{\perp}\cong(X\pten_{ A  } Y)^*\cong\mc{CB}_A(Y,X^*).$$

%If $Y= A  $, then clearly $N\subseteq\mathrm{Ker}(m_X)$ where $m_X:X\pten A  \rightarrow X$ is the multiplication map. If the induced mapping $\widetilde{m}_X:X\pten_{ A  } A  \rightarrow X$ is a completely isometric isomorphism we say that $X$ is an \e{induced $ A  $-module}. A similar definition applies for left modules. In particular, we say that $ A  $ is \e{self-induced} if $\widetilde{m}_ A  : A  \pten_{ A  } A  \cong A  $ completely isometrically. 

%For notational simplicity, given $X,Y$ in $\Amod$, we write $X\simeq Y$ if $X$ and $Y$ are isomorphic in $\Amod$, and $X\cong Y$ if there are completely isometrically isomorphic in $\Amod$.

The identification $ A_+= A  \oplus_1\C$ turns the unitization of $ A  $ into a unital completely contractive Banach algebra, and any $X\in\Amod$ becomes a unital operator $ A_+$-module via the extended action
\begin{equation}\label{e:unit}(a,\lm)\cdot x=a\cdot x +\lm x, \ \ \ a\in A, \ \lm\in\C, \ x\in X.\end{equation}
In fact, there is a canonical categorical equivalence $\Amod\cong A_+\hskip2pt\mathbf{unmod}$. Similarly for right modules. By definition of the extended action (\ref{e:unit}), it follows that $X\pten_A Y= X\pten_{A_+}Y$ for all $X\in\modA$ and $Y\in\Amod$. In particular, when $X=A_+$, we have 
\begin{equation}\label{e:A_+}A_+\pten_A Y=A_+\pten_{A_+} Y\cong Y\end{equation}
via multiplication. Similarly if $Y=A_+$ and $X\in\modA$ is arbitrary.

For any $X\in\Amod $, there is a canonical completely isometric morphism $\Delta_X^+:X\hookrightarrow\mc{CB}( A _+,X)$ given by
\begin{equation*}\Delta_X^+(x)(a)=a\cdot x, \ \ \ x\in X, \ a\in A _+,\end{equation*}
where the left $ A  $-module structure on $\mc{CB}( A _+,X)$ is defined by
\begin{equation*}(a\cdot \vphi)(b)=\vphi(ba), \ \ \ a\in A  , \ \vphi\in\mc{CB}( A _+,X), \ b\in A _+.\end{equation*} 

\begin{remark} If $A$ is a unital completely contractive Banach algebra with norm 1 unit then all concepts and results in this paper carry over naturally inside $A\hskip2pt\mathbf{unmod}$, and one does not require the (unconditional) unitization $A_+$. Since most of our examples are non-unital, unless otherwise stated, we will work inside $\Amod\cong A_+\hskip2pt\mathbf{unmod}$ for general $A$.
\end{remark}

A sequence 
\begin{equation*}
\begin{tikzcd}
X \arrow[r] &Y \arrow[r] &Z
\end{tikzcd}
\end{equation*}
in $\Amod$ is \textit{(topologically) exact} if the image of the first morphism is (norm dense in) the kernel of the second. We require the following lemma concerning topologically exact sequences. The standard proof is included for convenience of the reader.

\begin{lem}\label{l:rightexact} Let $A$ be a completely contractive Banach algebra, and let 
\begin{equation*}
\begin{tikzcd}
X \arrow[r, "\psi"] &Y \arrow[r, two heads, "q"] &Z
\end{tikzcd}
\end{equation*}
be a topologically exact sequence in $\Amod$ with $q$ a complete $\lm$-quotient map for some $\lm\geq 1$. Then for any $W\in\Amod$, the sequence 
\begin{equation}\label{e:l1}
\begin{tikzcd}
\Hom(Z,W) \arrow[r] &\Hom(Y,W) \arrow[r] &\Hom (X,W)
\end{tikzcd}
\end{equation}
is exact in $\Op$, and for any $V\in\modA$, the sequence
\begin{equation}\label{e:l2}
\begin{tikzcd}
V\pten_A X \arrow[r] &V\pten_A Y \arrow[r, two heads] &V\pten_A Z
\end{tikzcd}
\end{equation}
is topologically exact in $\Op$.
\end{lem}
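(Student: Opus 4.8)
The plan is to prove exactness of $(\ref{e:l1})$ by a direct diagram chase, and then to obtain the topological exactness of $(\ref{e:l2})$ by dualizing it and recognizing the dual as an instance of $(\ref{e:l1})$.

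For $(\ref{e:l1})$ the maps are $\eta\mapsto\eta\circ q$ and $\phi\mapsto\phi\circ\psi$; these are morphisms in $\Op$ and their composite vanishes because $q\circ\psi=0$, which follows from $\mathrm{im}(\psi)\subseteq\ker(q)$ (topological exactness). For the reverse inclusion one takes $\phi\in\Hom(Y,W)$ with $\phi\circ\psi=0$, so $\phi$ vanishes on $\mathrm{im}(\psi)$, hence on $\overline{\mathrm{im}(\psi)}=\ker(q)$ by continuity, and therefore factors through the canonical morphism $Y\twoheadrightarrow Y/\ker(q)$ via some $\bar{\phi}\in\Hom(Y/\ker(q),W)$. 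Since $q$ is a complete $\lm$-quotient map, the induced morphism $\widetilde{q}\colon Y/\ker(q)\to Z$ is an isomorphism in $\Amod$ with $\|\widetilde{q}^{-1}\|_{cb}\le\lm$, so $\bar{\phi}\circ\widetilde{q}^{-1}\in\Hom(Z,W)$ pulls back to $\phi$; this is the only place the quotient hypothesis on $q$ is used for $(\ref{e:l1})$.

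For $(\ref{e:l2})$ I would proceed in three steps. First, it is a complex, since $(\id_V\pten_A q)\circ(\id_V\pten_A\psi)=\id_V\pten_A(q\circ\psi)=0$. Second, $\id_V\pten_A q$ is a complete $\lm$-quotient map: the operator space projective tensor product preserves complete quotient maps (with constants) and commutes with quotients, so $\id_V\pten q\colon V\pten Y\to V\pten Z$ is a complete $\lm$-quotient map; composing with the complete quotient $V\pten Z\twoheadrightarrow V\pten_A Z$ and using that $q$ is a module map shows the composite annihilates the subspace $N$ with $V\pten_A Y=(V\pten Y)/N$, hence factors through $V\pten_A Y$, and a complete $\lm$-quotient map that factors through a complete quotient map induces a complete $\lm$-quotient map. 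This gives the ``two heads'' assertion. Third, for exactness at $V\pten_A Y$, apply $(\ref{e:l1})$ with $W=V^*\in\Amod$ to get exactness of $\Hom(Z,V^*)\to\Hom(Y,V^*)\to\Hom(X,V^*)$; under the canonical duality $(V\pten_A U)^*\cong\Hom(U,V^*)$ for $U\in\Amod$, together with its naturality in $U$, this sequence is exactly the dual of $(\ref{e:l2})$. Writing $f=\id_V\pten_A\psi$, $g=\id_V\pten_A q$, we therefore have $\ker(f^*)=\mathrm{im}(g^*)$; since $\ker(f^*)=(\overline{\mathrm{im}(f)})^{\perp}$ and $\mathrm{im}(g^*)\subseteq(\ker g)^{\perp}$, passing to pre-annihilators gives $\ker(g)\subseteq\overline{\mathrm{im}(f)}$, which together with the complex property $\overline{\mathrm{im}(f)}\subseteq\ker(g)$ yields $\overline{\mathrm{im}(f)}=\ker(g)$.

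I do not expect a genuine obstacle here — the text flags this as a standard lemma — but the points needing some care are the bookkeeping with the module tensor product in the surjectivity step (tracking the subspace $N$ and the compatibility of $\pten$ with quotients), and the naturality of the duality $(V\pten_A U)^*\cong\Hom(U,V^*)$ needed to identify the dual of $(\ref{e:l2})$ with an instance of $(\ref{e:l1})$.
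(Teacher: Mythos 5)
Your proposal is correct and follows essentially the same route as the paper: exactness of (\ref{e:l1}) by factoring a morphism killing $\psi$ through $Y/\overline{\psi(X)}\cong Y/\ker(q)$ and inverting the induced isomorphism $\widetilde{q}$, then topological exactness of (\ref{e:l2}) by identifying its dual with (\ref{e:l1}) for $W=V^*$ and applying Hahn--Banach via the annihilator calculus. The only difference is that you spell out the complete $\lm$-quotient property of $\id_V\pten_A q$ and the pre-annihilator chase explicitly, which the paper leaves implicit.
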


\begin{proof} Let $\widetilde{q}:Y/\overline{\psi(X)}\cong_{cb} Z$ be the induced complete isomorphism. Any morphism 
$$\vphi\in\mathrm{Ker}(\Hom(Y,W)\rightarrow\Hom(X,W))$$ 
satisfies $\vphi\circ\psi=0$. Thus, $\vphi$ induces a morphism $\widetilde{\vphi}\in\Hom(Y/\overline{\psi(X)},W)$. Then $\widetilde{\vphi}\circ\widetilde{q}^{-1}\in\Hom(Z,W)$ and $\widetilde{\vphi}\circ\widetilde{q}^{-1}(q(y))=\vphi(y)$ for all $y\in Y$, meaning $\vphi\in\mathrm{Im}(\Hom(Z,W)\rightarrow\Hom(Y,W))$. It follows that (\ref{e:l1}) is exact. Owing to the fact that a sequence in $\Op$ is topologically exact whenever its dual sequence is topologically w*-exact (by the Hahn-Banach theorem), (\ref{e:l2}) is topologically exact by exactness of (\ref{e:l1}) with $W=V^*$.
\end{proof}

Let $\lm\geq1$. $X\in\Amod$ is \e{$\lm$-projective} if for every $Y,Z\in\Amod$, every complete quotient morphism $q:Y\twoheadrightarrow Z$, every morphism $\vphi:X\rightarrow Z$, and every $\varepsilon>0$, there exists a morphism $\widetilde{\vphi}_\varepsilon:X\rightarrow Y$ such that $\norm{\widetilde{\vphi}_\varepsilon}_{cb}< \lm\norm{\vphi}_{cb}+\varepsilon$  and $q\circ\widetilde{\vphi}_\varepsilon=\vphi$, i.e., the following diagram commutes:

\begin{equation*}
\begin{tikzcd}
                     &Y \arrow[d, two heads, "q"]\\
X \arrow[ru, dotted, "\widetilde{\vphi}_\varepsilon"] \arrow[r, "\vphi"] &Z
\end{tikzcd}
\end{equation*}
For example, $A_+\pten T_n$ is 1-projective for any $n\in\N$, where $T_n$ is the space of $n\times n$ trace class operators. One way to see this is through the following chain of identifications, where the first and last identities use (\ref{e:relfree}) (see below):
\begin{align*}\Hom(A_+\pten T_n,Y/\mathrm{Ker}(q))&\cong\mc{CB}(T_n,Y/\mathrm{Ker}(q))\cong M_n(Y/\mathrm{Ker}(q))\cong M_n(Y)/M_n(\mathrm{Ker}(q))\\
&\cong\Hom(A_+\pten T_n, Y)/\Hom(A_+\pten T_n,\mathrm{Ker}(q)).\end{align*}

\begin{remark} Unlike the purely categorical notion of projectivity, which demands liftings through arbitrary epimorphisms, in this metric setting it is natural to demand liftings through complete quotient maps with norm control of the lifting. Hence the $\ep$ quantifier. Indeed, even in the setting of Banach spaces and quotient maps (a.k.a. metric surjections), without this additional quantifier the ground field $\C$ would fail to be a 1-projective Banach space (see \cite[Footnote 6]{Grothendieck} for the case of $\R$).
\end{remark}

%For $C\geq 1$, we say that $X$ is \e{relatively $C$-injective} if there exists a morphism $\Phi^+:\mc{CB}( A  ^+,X)\rightarrow X$ such that $\Phi^+\circ\Delta_X^+=\id_{X}$ and $\norm{\Phi^+}_{cb}\leq C$. When $X$ is faithful, this is equivalent to the existence a morphism $\Phi:\mc{CB}( A  ,X)\rightarrow X$ such that $\Phi\circ\Delta_X=\id_{X}$ and $\norm{\Phi}_{cb}\leq C$ by the operator analogue of \cite[Proposition 1.7]{DP}, where $\Delta_X(x)(a):=\Delta_X^+(x)(a)$ for $x\in X$ and $a\in A  $.

Given $\lm\geq 1$, $X\in\Amod$ is \textit{$\lm$-injective} if for every $Y,Z\in\Amod$, every completely isometric morphism $i:Y\hookrightarrow Z$, and every morphism $\vphi:Y\rightarrow X$, there exists a morphism $\widetilde{\vphi}:Z\rightarrow X$ such that $\norm{\widetilde{\vphi}}_{cb}\leq \lm\norm{\vphi}_{cb}$ and $\widetilde{\vphi}\circ i=\vphi$, that is, the following diagram commutes:

\begin{equation*}
\begin{tikzcd}
Z \arrow[rd, dotted, "\widetilde{\vphi}"]\\
Y \arrow[u, hook, "i"] \arrow[r, "\vphi"] &X
\end{tikzcd}
\end{equation*}

Unlike the case of projectivity, the $\ep$ quantifier is not needed  to ensure a rich source of examples when demanding norm preserving extensions through completely isometric monomorphisms.

A module $X\in\Amod$ is \e{$\lm$-flat} if its dual $X^*$ is $\lm$-injective in $\modA$ with respect to its canonical module structure (\ref{e:dualaction}). It is well-known (see, e.g., \cite[Theorem VII.1.42]{H} or \cite[Proposition 3.5.9]{Wood} for the relative case) that $X\in\Amod$ is $\lm$-flat if and only if for every 1-exact sequence
$$0\rightarrow Y\hookrightarrow Z\twoheadrightarrow Z/Y\rightarrow 0,$$
in $\modA$ the sequence
$$0\rightarrow Y\pten_A X\hookrightarrow_\lm Z\pten_A X\twoheadrightarrow Z/Y\pten_A X\rightarrow 0$$
is exact, and the second arrow is a complete $\lm$-embedding. For example, $A_+\pten T_I$ is 1-flat for any non-empty set $I$, which may be seen through 1-injectivity of $(A_+\pten T_I)^*=\mc{CB}(A_+,M_I)$, which is proved in \cite[Proposition 2.3]{C}.

For a detailed discussion of projective operator spaces (respectively, modules) see \cite{Blech} (respectively, \cite{Hel}). Flatness and injectivity (in the sense above) are treated in detail for Banach modules in \cite{Helem}. See \cite{Wood} for the general relative homological theory of operator modules.

%In the case of operator bimodules, we say that $X\in A  -\mathbf{mod}- A  $ is \e{relatively $C$-biflat} (respectively, \e{$C$-biflat}) if its dual $X^*$ is relatively $C$-injective (respectively, $C$-injective) in $ A  -\mathbf{mod}- A  $.

\subsection{Limits}\label{ss:limits} Limits and co-limits in $\Amod$ are defined analogously to the Banach space setting (see \cite[\S I.1]{CLM}, for example). Given a family $(X_i)_{i\in I}$ in $\Amod$, their \textit{product} $\prod_{i\in I} X_i$ is the $\ell^\infty$-direct sum of operator spaces (see, e.g., \cite[1.2.17]{BLM}) with diagonal module structure:
$$a\cdot(x_i)=(a\cdot x_i), \ \ \  (x_i)\in\prod_{i\in I} X_i, \ a\in A.$$
It is defined uniquely through the following universal property: given $Y\in\Amod$ and a uniformly completely bounded family of morphisms $\vphi_i:Y\rightarrow X_i$, with $\norm{\vphi_i}_{cb}\leq \lm$, there is a unique morphism $\vphi:Y\rightarrow \prod_{i\in I} X_i$ with $\norm{\vphi_i}_{cb}\leq \lm$ such that $\pi_i\circ\vphi=\vphi_i$ for all $i$, where $\pi_i$ is the canonical projection onto the $X_i$ factor. 

The \textit{coproduct} $\bigoplus_{i \in I} X_i$ of $(X_i)_{i \in I}$ is the $\ell^1$-direct sum of operator spaces (see, e.g., \cite[1.4.13]{BLM}) with diagonal module structure:
$$a\cdot(x_i)=(a\cdot x_i), \ \ \ (x_i)\in\bigoplus_{i \in I} X_i, \ a\in A.$$
It is defined uniquely through the following universal property: given $Y\in\Amod$ and a uniformly completely bounded family of morphisms $\vphi_i:X_i\rightarrow Y$ with $\norm{\vphi_i}_{cb}\leq\lm$ there is a unique morphism $\vphi:\sum_{i \in I} X_i\rightarrow Y$ such that $\norm{\vphi}_{cb}\leq \lm$ and $\vphi\circ \iota_i=\vphi_i$ for all $i$, where $\iota_i$ is the canonical inclusion of the $X_i$ factor. 

An inductive system $(X_i,\vphi_{j,i})_{i,j\in I}$ in $\Amod_1$ is a directed family $(X_i)_{i\in I}$ of modules together with a family of morphisms $\vphi_{j,i}:X_i\rightarrow X_j$, for $i\leq j$, satisfying $\vphi_{i,i}=\id_{X_i}$ and $\vphi_{k,j}\circ\vphi_{j,i}=\vphi_{k,i}$ whenever $k\geq j\geq i$. An operator module $X\in\Amod$ together with a family of morphisms $\vphi_i:X_i\rightarrow X$ in $\Amod_1$ is a \textit{direct limit} of the inductive system $((X_i),\vphi_{j,i})$, if the universal property, illustrated by
\begin{equation}\label{d:co-limit}
\begin{tikzcd}
X_{i} \arrow[dd, "\vphi_{j,i}"]\arrow[rd, "\vphi_i"]\arrow[rdrr, "\psi_i"]\\
& X\arrow[rr, dotted, "\exists! \psi"] & &Y\\
X_{j}\arrow[ru, "\vphi_j"]\arrow[rurr, "\psi_j"]
\end{tikzcd}
\end{equation}
is satisfied. That is, for every $Y\in\Amod$ and uniformly completely bounded family of morphisms $\psi_i:X_i\rightarrow Y$ with $\norm{\psi_i}_{cb}\leq\lm$ making the above diagrams commute, there exists a unique morphism $\psi:X\rightarrow Y$ such that $\norm{\psi}_{cb}\leq\lm$ and $\psi\circ\vphi_i=\psi_i$ for each $i$. We denote $X$ by $\varinjlim_i X_i$. Analogous to the Banach space setting \cite{CLM}, it coincides with the quotient of $\bigoplus_{i \in I}X_i$ by the closed submodule generated by $\vphi_i(x_i)-\vphi_{j}(\vphi_{j,i}(x_i))$, for all $x_i\in X_i$, and $i\leq j$. We can also identify $\varinjlim_i X_i$ with the subspace $\overline{\pi_\infty(X_\infty)}$ of the asymptotic product $\prod_{i\in I} X_i/\sum_{i\in I} X_i$, where $\sum_{i\in I}X_i$ is the $c_0$-direct sum,
$$\pi_\infty:\prod_{i\in I} X_i\twoheadrightarrow\prod_{i\in I} X_i/\sum_{i\in I} X_i$$
is the canonical quotient map, and $X_\infty$ is the submodule 
$$\{(x_i)\in\prod_i X_i\mid \exists \ i_0 \ \textnormal{with} \ x_j=\vphi_{j,i}(x_i), \ j\geq i\geq i_0\}.$$
For sequential inductive systems in $\Op$ with completely isometric connecting maps, this fact is proven in \cite[\S1.3]{ER}. Although well-known in the $C^*$-context, we include a proof for completeness.

\begin{prop}\label{p:limitrep} Let $(X_i,\vphi_{j,i})_{i,j\in I}$ be an inductive system in $\Amod_1$. Then $\varinjlim_i X_i\cong\overline{\pi_\infty(X_\infty)}$, completely isometrically.
\end{prop}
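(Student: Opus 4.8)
The plan is to verify the universal property (\ref{d:co-limit}) directly for the concrete space $\overline{\pi_\infty(X_\infty)}$, together with the canonical morphisms $\iota_i \colon X_i \to \overline{\pi_\infty(X_\infty)}$ defined by $\iota_i(x_i) = \pi_\infty\big((\vphi_{j,i}(x_i))_{j \geq i}\big)$ (with zeros, or anything bounded, in the coordinates $j \not\geq i$ — the choice is irrelevant modulo $\sum_i X_i$ since the tail is what survives). Since $\varinjlim_i X_i$ is \emph{defined} by this universal property, uniqueness of objects representing a universal property then gives the claimed completely isometric isomorphism. The work therefore splits into: (a) checking that each $\iota_i$ is a well-defined contractive $A$-module morphism and that $\iota_j \circ \vphi_{j,i} = \iota_i$, so that $\overline{\pi_\infty(X_\infty)}$ sits in the cocone; and (b) given a competing cocone $\psi_i \colon X_i \to Y$ with $\norm{\psi_i}_{cb} \leq \lm$, producing the unique factoring morphism $\psi \colon \overline{\pi_\infty(X_\infty)} \to Y$ with $\norm{\psi}_{cb} \leq \lm$.

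\medskip
\textbf{Construction of the factoring map.} For (b), the family $(\psi_i)$ induces, by the universal property of the product, a completely bounded morphism $\Psi \colon \prod_{i} X_i \to Y$ (this uses that $Y$ is a module and the $\psi_i$ are uniformly $cb$-bounded). The key observation is that $\Psi$ kills $\sum_i X_i$: if $(x_i) \in \prod_i X_i$ has $x_i \to 0$ in norm, then $\Psi((x_i))$ should be $0$ — but this is \emph{not} automatic from $\norm{\Psi}_{cb} \leq \lm$ alone, and here I would use that $Y$ is (by construction) obtained as a subobject and that $\Psi$ restricted to the $c_0$-sum $\sum_i X_i$ has range in the closed span of $\psi_i(x_i)$'s where the compatibility $\psi_j \circ \vphi_{j,i} = \psi_i$ forces cancellation; more cleanly, one notes $\Psi|_{\sum_i X_i}$ factors through $\varinjlim$ of the \emph{same} system with all $\vphi_{j,i}$ replaced by zero, hence is zero on a dense set by the telescoping identity $\psi_i(x_i) = \psi_j(\vphi_{j,i}(x_i))$ applied cofinally. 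Thus $\Psi$ descends to $\overline{\Psi} \colon \prod_i X_i / \sum_i X_i \to Y$ with $\norm{\overline{\Psi}}_{cb} \leq \lm$, and I set $\psi := \overline{\Psi}|_{\overline{\pi_\infty(X_\infty)}}$. One then checks $\psi \circ \iota_i = \psi_i$ directly on generators, and $\norm{\psi}_{cb} \leq \lm$ is inherited. Uniqueness of $\psi$ follows because $\overline{\pi_\infty(X_\infty)}$ is, by definition, the closed span of the ranges of the $\iota_i$.

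\medskip
\textbf{The main obstacle.} The delicate point is establishing that the canonical inclusions $\iota_i$, and the competing morphisms, are \emph{complete isometries / have the right norm} at the level of $\overline{\pi_\infty(X_\infty)}$ — equivalently, that the concrete space $\overline{\pi_\infty(X_\infty)}$ really is the quotient of $\bigoplus_i X_i$ by the relations submodule $N := \overline{\langle \vphi_i(x_i) - \vphi_j(\vphi_{j,i}(x_i))\rangle}$ and not something smaller. This amounts to showing the natural surjection $\bigoplus_i X_i / N \to \overline{\pi_\infty(X_\infty)}$ is \emph{isometric} (indeed completely isometric): surjectivity and contractivity are straightforward, but injectivity with norm control requires a Hahn–Banach / duality argument — one identifies the dual of $\bigoplus_i X_i / N$ as the space of compatible bounded families $(f_i)$ with $f_i = f_j \circ \vphi_{j,i}$ in $\prod_i X_i^*$ (a ``projective limit'' of duals), and checks this separates points of, and computes the matrix norms on, $\overline{\pi_\infty(X_\infty)}$. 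This is exactly where the $\Amod_1$ hypothesis (connecting maps contractive) and the operator-space structure on the $\ell^\infty$- and $c_0$-direct sums (via the identifications $M_n(\prod_i X_i) = \prod_i M_n(X_i)$) enter, and following \cite[\S1.3]{ER} one reduces the completely isometric claim to the isometric one applied at each matrix level. Once this identification is in hand, parts (a) and (b) are routine diagram chases.
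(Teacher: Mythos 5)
Your overall plan --- verify the universal property directly for $\overline{\pi_\infty(X_\infty)}$ and invoke uniqueness of universal objects --- is the same as the paper's, and part (a) is fine. But the construction of the factoring map in part (b) has a genuine gap. You claim that the compatible family $(\psi_i\colon X_i\to Y)$ induces, ``by the universal property of the product,'' a morphism $\Psi\colon\prod_i X_i\to Y$. The universal property of the product governs morphisms \emph{into} $\prod_i X_i$, not out of it; for maps out of a family one needs the coproduct $\bigoplus_i X_i$ (the $\ell^1$-sum), and a uniformly bounded family $\psi_i\colon X_i\to Y$ simply does not define any bounded map on the $\ell^\infty$-product (there is no sensible candidate value for $\Psi((x_i))$ for a general bounded family $(x_i)$). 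Since $\Psi$ does not exist, the subsequent discussion of whether it ``kills'' $\sum_i X_i$ cannot be repaired as stated. The correct move --- and the one the paper makes --- is to define $\psi$ only on the submodule $X_\infty$ of eventually coherent families, by $\psi((x_i)):=\psi_{i_0}(x_{i_0})$ for any admissible index $i_0$; the compatibility $\psi_j\circ\vphi_{j,i}=\psi_i$ makes this independent of $i_0$, the estimate $\norm{\psi(\pi_\infty((x_i)))}\le\lm\limsup_i\norm{x_i}=\lm\norm{\pi_\infty((x_i))}$ shows it descends to $\pi_\infty(X_\infty)$ with the right ($cb$-)norm bound, and one extends by continuity to the closure.

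Your ``main obstacle'' paragraph is also off target. Once the universal property is verified for $\overline{\pi_\infty(X_\infty)}$, the completely isometric identification with $\varinjlim_i X_i$ (however the latter is modelled, e.g.\ as $\bigoplus_i X_i/N$) follows automatically from uniqueness of objects representing a universal property; no separate Hahn--Banach/duality argument identifying the dual of $\bigoplus_i X_i/N$ with a projective limit of duals is needed, and in any case you only sketch that argument rather than carry it out. The one genuinely needed norm computation is the quotient-norm formula $\norm{(\pi_\infty)_n((x_i))}=\limsup_i\norm{(x_i)_n}$ on $M_n(\prod_i X_i/\sum_i X_i)$, which you allude to via $M_n(\prod_i X_i)=\prod_i M_n(X_i)$ but should state and use explicitly, as it is what makes the canonical maps $\iota_i$ completely contractive and gives the norm control on $\psi$.
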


\begin{proof} The standard argument shows that
$$\norm{(\pi_\infty)_n(x_i)}=\limsup_i\norm{x_i}, \ \ \ (x_i)\in M_n\bigg(\prod_{i\in I} X_i\bigg), \ n\in\N.$$
(See \cite[Proposition A.6.1]{ER} for the case of sequences of Banach spaces.)

For each $i\in I$, let $\vphi_i:X_i\rightarrow\overline{\pi_\infty(X_\infty)}$ be given by $\pi_\infty((\vphi_i(x_i)_j))$, where $\vphi_i(x_i)_j=\vphi_{j,i}(x_i)$, if $j\geq i$ and $\vphi_i(x_i)_j=0$ otherwise. Clearly, $(\vphi_i(x_i)_j)\in X_\infty$ as for every $k\geq j\geq i$, $\vphi_{k,j}(\vphi_i(x_i)_j)=\vphi_{k,j}(\vphi_{j,i}(x_i))=\vphi_{k,i}(x_i)=\vphi_i(x_i)_k$. Also,
$$\norm{\vphi_i(x_i)}=\norm{\pi_\infty((\vphi_i(x_i)_j))}=\limsup_j\norm{\vphi_i(x_i)_j}=\limsup_j\norm{\vphi_{j,i}(x_i)}\leq\norm{x_i}.$$
The same argument shows that $\vphi_i$ is completely contractive, and thus a morphism in $\Amod_1$.

We show that $\overline{\pi_\infty(X_\infty)}$ together with the family $(\vphi_i)$ satisfies the universal property of the direct limit. To that end, let $Y\in\Amod$ and $\psi_i:X_i\rightarrow Y$ be a family of morphisms in $\Amod_1$ for which $\norm{\psi_i}_{cb}\leq \lm$ for all $i$ and $\psi_j\circ\vphi_{j,i}=\psi_i$ for all $i\leq j$. First define $\psi:X_\infty\rightarrow Y$ by $\psi((x_i))=\psi_{i_0}(x_{i_0})$. This definition is independent of the $i_0$ chosen: if $j_0\in I$ such that $x_j=\vphi_{j,i}(x_i)$ for all $j\geq i\geq j_0$, pick $k_0\geq i_0,j_0$. Then
$$\psi_{i_0}(x_{i_0})=\psi_{k_0}(\vphi_{k_0,i_0}(x_{i_0}))=\psi_{k_0}(x_{k_0})=\psi_{k_0}(\vphi_{k_0,j_0}(x_{j_0}))=\psi_{j_0}(x_{j_0}).$$
Also, if $x=(x_i)$ and $y=(y_i)$ belong to $X_\infty$ and $\pi_\infty(x)=\pi_\infty(y)$, then for every $\ep>0$, there exists $i_\ep$ such that $\norm{x_i-y_i}<\ep$ for $i\geq i_\ep$. Pick $i_0\geq i_\ep$ for which $\vphi_{j,i}(x_i)=x_j$ and $\vphi_{j,i}(y_i)=y_j$ for all $j\geq i\geq i_0$. Then
$$\norm{\psi(x)-\psi(y)}=\norm{\psi_{i_0}(x_{i_0})-\psi_{i_0}(y_{i_0})}\leq\lm\norm{x_{i_0}-y_{i_0}}<\ep.$$
Hence, $\psi(x)=\psi(y)$, so $\psi$ induces a well defined $A$-module map $\widetilde{\psi}:\pi_\infty(X_\infty)\rightarrow Y$. Finally,
$$\norm{\widetilde{\psi}(\pi_\infty((x_i)))}=\norm{\psi_{i_0}(x_{i_0})}\leq\lm\sup_{i\geq i_0}\norm{x_i},$$
for any $i_0$ satisfying $x_j=\vphi_{j,i}(x_i)$, $j\geq i\geq i_0$. Hence,
$$\norm{\widetilde{\psi}(\pi_\infty((x_i)))}\leq\lm\limsup_i\norm{x_i}=\lm\norm{\pi_\infty((x_i))}.$$
A similar argument shows that $\norm{\widetilde{\psi}}_{cb}\leq\lm$. Hence $\widetilde{\psi}$ extends to a morphism $\overline{\pi_\infty(X_\infty)}\rightarrow Y$ in $\Amod$ satisfying
$$\widetilde{\psi}(\vphi_i(x_i))=\psi_j(\vphi_i(x_i)_j)=\psi_j(\vphi_{j,i}(x_i))=\psi_i(x_i).$$
Uniqueness is obvious.
\end{proof}

We also require the following standard result. The categorical Banach space argument from \cite[1.8e]{CLM} generalizes verbatim as the universal property of $\pten_A$ implies that
$$\mc{CB}(Y\pten_A X,Z)\ni \vphi\mapsto (x\mapsto (y\mapsto\vphi(y\ten_A x))\in\Hom(X,\mc{CB}(Y,Z)),$$ 
is a completely isometric isomorphism for any $X\in\Amod_1$, $Y\in\modA_1$ and $Z\in\Op_1$ (see \cite[Proposition 3.5.9]{BLM}).

\begin{prop}\label{p:iso} Let $(X_i,\vphi_{j,i})_{i,j\in I}$ be an inductive system in $\Amod_1$. Then for any $Y\in\modA_1$, $Y\pten_A(\varinjlim_i X_i)\cong \varinjlim_i(Y\pten_A X_i)$ completely isometrically.
\end{prop}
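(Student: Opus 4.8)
The plan is to prove this by combining the explicit description of the direct limit from Proposition~\ref{p:limitrep} with the compatibility of $\pten_A$ with quotients (Lemma~\ref{l:rightexact}) and with $c_0$- and $\ell^\infty$-type sums. Concretely, I would first reduce to the ``concrete model'': by Proposition~\ref{p:limitrep}, $\varinjlim_i X_i\cong\overline{\pi_\infty(X_\infty)}\subseteq\prod_{i\in I}X_i/\sum_{i\in I}X_i$, and likewise $\varinjlim_i(Y\pten_A X_i)\cong\overline{\pi_\infty(Z_\infty)}$ where $Z_\infty$ is the corresponding submodule of $\prod_i (Y\pten_A X_i)/\sum_i(Y\pten_A X_i)$ built from the maps $\id_Y\pten_A\vphi_{j,i}$. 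So it suffices to produce a natural complete isometry between $Y\pten_A\overline{\pi_\infty(X_\infty)}$ and $\overline{\pi_\infty(Z_\infty)}$.

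Second, I would set up the natural map. There are canonical complete contractions $Y\pten_A X_i\to Y\pten_A(\varinjlim_k X_k)$ induced by $\id_Y\pten_A\vphi_i$, and these are compatible with the connecting maps $\id_Y\pten_A\vphi_{j,i}$; by the universal property of $\varinjlim$ (applied in $\modA_1$, or rather to the tensored system, noting $\pten_A$ takes values in $\Op_1$), one gets a canonical complete contraction $\Phi:\varinjlim_i(Y\pten_A X_i)\to Y\pten_A(\varinjlim_i X_i)$. The heart of the matter is to show $\Phi$ is a complete isometric isomorphism. For this I would exploit the identification $\mc{CB}(Y\pten_A X,Z)\cong\Hom(X,\mc{CB}(Y,Z))$ recorded just before the statement, valid for $X\in\Amod_1$, $Y\in\modA_1$, $Z\in\Op_1$. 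Applying this, for any operator space $Z$,
\[
\mc{CB}\bigl(Y\pten_A(\varinjlim_i X_i),Z\bigr)\cong\Hom\bigl(\varinjlim_i X_i,\mc{CB}(Y,Z)\bigr),
\]
and by the universal property of the direct limit, the right-hand side is the ``projective limit'' $\varprojlim_i\Hom(X_i,\mc{CB}(Y,Z))$ in the sense that a morphism out of $\varinjlim_i X_i$ is exactly a compatible uniformly bounded family of morphisms out of the $X_i$. Running the same identification backwards, $\mc{CB}(\varinjlim_i(Y\pten_A X_i),Z)\cong\varprojlim_i\mc{CB}(Y\pten_A X_i,Z)\cong\varprojlim_i\Hom(X_i,\mc{CB}(Y,Z))$. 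These identifications are natural and compatible with $\Phi$, so $\Phi^*$ is a complete isometric isomorphism for every $Z$; taking $Z=\C$ (or tracking matrix levels) shows $\Phi$ is a complete isometry with dense range, and one checks its range is already closed --- or, more cleanly, conclude $\Phi$ is a complete isometric isomorphism by a Hahn--Banach/bidual argument since $\Phi^*$ is an isomorphism at every matrix level.

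An alternative, more hands-on route I would keep in reserve: realize $\varinjlim_i X_i$ as the quotient of the coproduct $\bigoplus_i X_i$ by the closed submodule $N$ generated by the elements $\vphi_i(x_i)-\vphi_j(\vphi_{j,i}(x_i))$, use that $Y\pten_A(\bigoplus_i X_i)\cong\bigoplus_i(Y\pten_A X_i)$ (compatibility of the projective tensor product with $\ell^1$-sums, since $\pten$ is so compatible and passing to the module tensor product is a quotient that respects the sum), and then invoke Lemma~\ref{l:rightexact}\eqref{e:l2}: tensoring the topologically exact sequence $N\to\bigoplus_i X_i\twoheadrightarrow\varinjlim_i X_i$ with $Y$ over $A$ keeps it topologically exact, identifying $Y\pten_A(\varinjlim_i X_i)$ with $\bigl(\bigoplus_i(Y\pten_A X_i)\bigr)/\overline{(\id_Y\pten_A\mathrm{incl})(Y\pten_A N)}$. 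One then checks this last quotient is precisely $\varinjlim_i(Y\pten_A X_i)$ by identifying the closure of $(\id_Y\pten_A)$ of the relation submodule with the relation submodule of the tensored system --- this is where the density/closure bookkeeping lives.

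I expect the main obstacle to be exactly this closure issue: in either approach one must show that applying $\id_Y\pten_A(\cdot)$ to the defining relation submodule (or to $X_\infty$) has closure equal to the defining relation submodule of the tensored inductive system, i.e.\ that no ``new'' relations and no failure of density creep in when passing through the projective module tensor product, which is only right-exact in a topological sense. The duality argument via $\mc{CB}(\,\cdot\,,Z)\cong\Hom(\,\cdot\,,\mc{CB}(Y,Z))$ is the cleanest way to sidestep this, since there topological exactness of a sequence is detected by exactness of the dual sequence (as used in the proof of Lemma~\ref{l:rightexact}), so I would present that as the main line of proof and relegate the coproduct computation to a remark.
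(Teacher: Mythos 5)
Your main line of argument is essentially the paper's own proof: the paper disposes of this proposition by observing that the adjunction $\mc{CB}(Y\pten_A X,Z)\cong\Hom(X,\mc{CB}(Y,Z))$ exhibits $Y\pten_A(\cdot)$ as a left adjoint, so the categorical argument of \cite[1.8e]{CLM} (left adjoints preserve colimits) applies verbatim, which is exactly the duality-via-$\Hom(\cdot,\mc{CB}(Y,Z))$ computation you present. Your proposal is correct, and your instinct to make the adjunction the main proof and demote the coproduct/quotient bookkeeping to a remark matches the paper's presentation.
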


\subsection{Remarks on Freeness}\label{ss:free} Let $X\in\Op$, and let $\mc{F}:\Amod\rightarrow\mathbf{Op}$ denote the forgetful functor. For $Y$ in $\Amod$, the map 
\begin{equation}\label{e:relfree}\Hom( A_+\pten X,Y)\ni\vphi\mapsto\vphi|_{1\ten X}\in\mc{CB}(X,\mc{F}(Y))\end{equation}
is a complete contraction with completely contractive inverse 
$$\mc{CB}(X,\mc{F}(Y))\ni\psi\mapsto m^+_{Y}\circ (\id_{A^+}\ten\psi)\in \Hom( A_+\pten X,Y).$$
Thus, $\Hom( A_+\pten X,Y)\cong\mc{CB}(X,\mc{F}(Y))$ completely isometrically in $\mathbf{Op}$. 

This isomorphism can be interpreted as the ``relative freeness'' of $A_+\pten X$ in $\Amod$. Indeed, it says that any \textit{linear} completely bounded map $X\rightarrow Y$ admits a unique $A$-module extension to a morphism $A_+\pten X\rightarrow Y$. The relativity in this description is that between the operator space and $A$-module structures of $X$ and $Y$. There is a ``strict'' notion of freeness in $\Amod$ which simultaneously incorporates the operator space and $A$-module structures. This was defined by Helemskii \cite{Hel} following the categorical approach to free objects (see \cite[Definition 2.10]{Hel}, \cite[Chapter III]{MacLane}). In the case of $\Op$, free objects are defined with respect to the functor
\begin{equation}\label{e:functor} V\ni\Op\mapsto \prod_{n\in\N}\mathrm{Ball}(M_n(V))\in\mathbf{Set};\end{equation}
the infinite product in (\ref{e:functor}) corresponding to the completely bounded nature of morphisms in $\Op$. Note that free operator spaces in this sense are always infinite dimensional, e.g., the free operator space over the singleton $I=\{i_0\}$ is $\bigoplus_{n=1}^\infty T_n$ (see \cite[Theorem 5.9]{Hel}). In particular, finite-dimensional trace class operators $T_n$ are not free. Also, for any infinite set $I$, $T_I$ is not free in $\Op$, as it is not projective (see \cite[Corollary 3.11]{Blech}, \cite{ER2}), and any free object in $\Amod$ is projective \cite[Proposition 2.11]{Hel}. Nevertheless, the family of operator spaces $T_I$ satisfies a variant of freeness, which is a matricial analogue of free Banach spaces (see Remark \ref{r:free} below). 

Given a non-empty set $I$, we call a function $f:I\times I\rightarrow X$ \textit{matricially contractive} if $\norm{[f(i,j)]}_{M_I(X)}\leq 1$.

\begin{prop}\label{p:free} Let $A$ be a completely contractive Banach algebra, let $I$ be a non-empty set, and let $\{e_{i,j}\}$ be the standard matrix units in $T_I$. Then $f:I\times I\ni (i,j)\mapsto 1\ten e_{i,j}\in A_+\pten T_I$ is a matricial contraction, and the module $A_+\pten T_I$ satisfies the following universal property in $\Amod_1$ with respect to $I$ and $f$:
\begin{enumerate}
\item $A_+\pten T_I=\overline{\mathrm{span}_{A_+}}\{f(i,j)\mid i,j\in I\}$; and
\item for any $Y\in\Amod$ and any matrically contractive $g:I\times I\rightarrow Y$, there exists a unique morphism $\vphi:X\rightarrow Y$ in $\Amod_1$ such that $\vphi\circ f=g$. 
\end{enumerate}
\end{prop}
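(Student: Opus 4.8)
The plan is to verify the universal property directly from the relative freeness isomorphism (\ref{e:relfree}) combined with the operator space identifications recalled in Section \ref{ss:op}, namely $T_I(Y) = Y \pten T_I$ and $M_I(Y^{**})$-type dualities. First I would observe that matricial contractions $g : I \times I \to Y$ are exactly the same data as complete contractions $T_I \to Y$: indeed, under the identification $\mc{CB}(T_I, Y) \cong M_I(Y)$ (recalled from \cite[Proposition 1.5.14(10)]{BLM}), a complete contraction $u : T_I \to Y$ corresponds to the matrix $[u(e_{i,j})] \in M_I(Y)$ with $\norm{[u(e_{i,j})]}_{M_I(Y)} \le 1$, and conversely any matricially contractive $g$ determines such a matrix, hence a unique complete contraction $u_g : T_I \to Y$ with $u_g(e_{i,j}) = g(i,j)$. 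In particular, taking $g(i,j) = 1 \ten e_{i,j}$ and $Y = A_+ \pten T_I$, one checks $\norm{[1 \ten e_{i,j}]}_{M_I(A_+ \pten T_I)} \le \norm{[e_{i,j}]}_{M_I(T_I)} = 1$ using complete contractivity of the map $T_I \to A_+ \pten T_I$, $t \mapsto 1 \ten t$; so $f$ is a matricial contraction.

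Next I would establish (1). Since $A_+ \pten T_I$ is the closed linear span of elementary tensors $a \ten t$ with $a \in A_+$, $t \in T_I$, and $T_I = \overline{\mathrm{span}}\{e_{i,j} : i,j \in I\}$ (by definition $T_I$ is the closure of finitely supported matrices, and finitely supported matrices are finite linear combinations of matrix units), it follows that $A_+ \pten T_I = \overline{\mathrm{span}}\{a \ten e_{i,j}\} = \overline{\mathrm{span}_{A_+}}\{1 \ten e_{i,j}\} = \overline{\mathrm{span}_{A_+}}\{f(i,j)\}$, where the middle equality uses $a \cdot (1 \ten e_{i,j}) = a \ten e_{i,j}$ under the $A_+$-module structure on $A_+ \pten T_I$. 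For (2), given matricially contractive $g : I \times I \to Y$, form the associated complete contraction $u_g : T_I \to \mc{F}(Y)$ as above, and then apply the relative freeness isomorphism (\ref{e:relfree}) with $X = T_I$: it yields a unique morphism $\vphi := m_Y^+ \circ (\id_{A_+} \ten u_g) \in \Hom(A_+ \pten T_I, Y)$ extending $u_g$, and $\norm{\vphi}_{cb} \le 1$ since (\ref{e:relfree}) and its inverse are completely contractive. We then have $\vphi(f(i,j)) = \vphi(1 \ten e_{i,j}) = 1 \cdot u_g(e_{i,j}) = g(i,j)$, so $\vphi \circ f = g$. Uniqueness of $\vphi$ as a morphism in $\Amod_1$ follows from (1): any two morphisms agreeing on the $f(i,j)$ agree on $\mathrm{span}_{A_+}\{f(i,j)\}$, hence on its closure, which is all of $A_+ \pten T_I$.

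The step requiring the most care is the correspondence between matricial contractions and complete contractions $T_I \to Y$, specifically verifying that $\norm{[u(e_{i,j})]}_{M_I(Y)} \le 1$ is genuinely \emph{equivalent} to $\norm{u}_{cb} \le 1$ and not merely implied by it. For finite $I = \{1, \dots, n\}$ this is the standard fact that $\mc{CB}(T_n, Y) = M_n(Y)$ isometrically (and completely isometrically), and for general $I$ one passes to finite submatrices: a matrix $[y_{i,j}] \in M_I(Y)$ has norm $\le 1$ iff every finite submatrix does, and correspondingly a linear map $T_I \to Y$ is a complete contraction iff its restriction to every $T_F$, $F \subseteq I$ finite, is — here one uses that finitely supported matrices are dense in $T_I$ and that $K_I \subseteq T_I \subseteq M_I$ with the relevant compatibility. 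I would cite \cite[Proposition 1.5.14(10)]{BLM} and \cite[\S10.1]{ER} for the infinite-index version rather than reprove it. Everything else is a routine unwinding of definitions and the already-recorded properties of $\pten$ and the unitization.
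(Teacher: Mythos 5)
Your proof is correct and follows essentially the same route as the paper's: both rest on the identification $M_I(Y)\cong\mc{CB}(T_I,Y)$ to convert a matricial contraction $g$ into a complete contraction $u_g:T_I\to Y$, and then on the relative freeness isomorphism (\ref{e:relfree}) to extend it to a module morphism on $A_+\pten T_I$. The only difference is presentational --- the paper packages the extension as an evaluation pairing $T_I(A_+)\pten M_I(Y)\to Y$ whereas you write it directly as $m_Y^+\circ(\id_{A_+}\ten u_g)$ --- and you additionally spell out item (1) and the uniqueness assertion, which the paper's proof leaves implicit.
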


\begin{proof} First, we show that the map 
$$f:I\times I \ni (i,j)\mapsto 1\ten e_{i,j}\in A_+\pten T_I$$
is a matricial contraction. Since $T_I\ni\rho\mapsto 1\ten\rho\in A_+\pten T_I$ is a complete isometry, and $M_I(T_I)=M_I(K_I^*)=\mc{CB}(K_I,M_I)$ \cite[Proposition 1.2.29]{BLM}, we have
\begin{align*}\norm{[1\ten e_{i,j}]}_{M_I(A_+\pten T_I)}&=\norm{[e_{i,j}]}_{M_I(T_I)}\\
&=\sup\{\norm{[\la e_{i,j},A_{k,l}\ra]}_{M_n(M_I)}\mid [A_{k,l}]\in\mathrm{Ball}(M_n(K_I)), \ n\in\N\}\\
&=\sup\{\norm{[(A_{k,l})_{i,j}]}_{M_n(M_I)}\mid [A_{k,l}]\in\mathrm{Ball}(M_n(K_I)), \ n\in\N)\}\\
&=1,\end{align*}
where the third equality uses the canonical scalar pairing
$$\la\cdot,\cdot\ra:T_I\times K_I\ni (\rho,x)=\sum_{i,j\in I} \rho_{i,j}x_{i,j}\in\C.$$
Next, given $Y\in\Amod$ and a matricial contraction $g:I\times I\rightarrow Y$, define 
\begin{equation}\label{e:Th}\vphi_g:T_I(A_+)\ni [a_{i,j}]\mapsto \sum_{i,j\in I} a_{i,j}\cdot g(i,j)\in Y.\end{equation}
This expression is well defined as $\vphi_g([a_{i,j}])=\vphi([a_{i,j}]\ten [g(i,j)])$, where $\vphi:T_I(A_+)\pten M_I(Y)\rightarrow Y$ is the complete contraction given by the composition
\begin{align*}T_I(A_+)\pten M_I(Y)&=(A_+\pten T_I)\pten\mc{CB}(T_I,Y)\\
&=(A_+\pten T_I)\pten\Hom(A_+\pten T_I,Y)\\
&\rightarrow Y,
\end{align*}
where the first equality uses the identifications $T_I(A_+)= A_+\pten T_I$, and $M_I(Y)=\mc{CB}(T_I,Y)$ (see section \ref{ss:op}), the second equality uses (\ref{e:relfree}), and the arrow is evaluation (using the universal property of $\pten$ \cite[Proposition 7.1.2]{ER}). Thus, $\vphi_g$ is a completely contractive morphism satisfying $\vphi_g(1\ten e_{i,j})=g(i,j)$, $i,j\in I$. 
\end{proof}

\begin{remark}\label{r:free}
\begin{enumerate}
\item It follows that $A_+\pten T_I$ is the unique module in $\Amod_1$ (up to isomorphism) satisfying properties (1)-(2). We will refer to these properties as the \textit{matricial freeness} $A_+\pten T_I$. In particular, $T_I$ is a ``matricially free'' operator space over $I$.
\item Free objects in the category $\Ban_1$ of Banach spaces and contractive linear maps are of the form $\ell^1(I)$ for a non-empty set $I$ (see \cite[1.11, Remark 2]{CLM}). Indeed, $f:I\ni i\mapsto e_i\in\mathrm{Ball}(\ell^1(I))$ is a function such that for any $Y\in\Ban_1$ and function $g:I\rightarrow\mathrm{Ball}(Y)$, there exists a unique morphism $\vphi:X\rightarrow Y$ satisfying $\vphi\circ f=g$. Viewing $g\in\mathrm{Ball}(\ell^\infty(I,Y))$, when $Y$ is an operator space, the natural matricial analogue of $\mathrm{Ball}(\ell^\infty(I,Y))$ is the unit ball of $M_I(Y)$. Thus, the matricial freeness of $T_I$ is the operator space analogue of the freeness of $\ell^1(I)$.
\end{enumerate}
\end{remark}

The duality $\mc{CB}(A_+,M_I)=(A_+\pten T_I)^*$ motivates the next definition, which will underlie the approximation properties in section \ref{s:nuc}.

\begin{defn}\label{d:cofree} Let $A$ be a completely contractive Banach algebra. A module $X\in\modA$ is \textit{co-free} if it is of the form $\mc{CB}(A_+,M_I)$ for some non-empty set $I$. When $|I|<\infty$ we say that $X$ is \textit{finitely co-generated}. Similar definitions apply to left modules.
\end{defn}

Any co-free module is necessarily 1-injective. See the proof of \cite[Proposition 2.3]{C} (which is modelled off of \cite[Lemma 1]{Ham78}).

\subsection{Locally Compact Quantum Groups}\label{ss:qg} A \textit{locally compact quantum group} is a quadruple $\G=(\LIQ,\Gam,h_L,h_R)$, where $\LIQ$ is a Hopf-von Neumann algebra with co-product $\Gam:\LIQ\rightarrow\LIQ\oten\LIQ$, and $h_L$ and $h_R$ are fixed left and right Haar weights on $\LIQ$, respectively \cite{KV1,KV2}.

For every locally compact quantum group $\G$, there exists a \e{left fundamental unitary operator} $W$ on $L^2(\G,h_L)\ten L^2(\G,h_L)$ and a \e{right fundamental unitary operator} $V$ on $L^2(\G,h_R)\ten L^2(\G,h_R)$ implementing the co-product $\Gam$ via
\begin{equation*}\Gam(x)=W^*(1\ten x)W=V(x\ten 1)V^*, \ \ \ x\in\LIQ.\end{equation*}
Both unitaries satisfy the \e{pentagonal relation}; that is,
\begin{equation}\label{e:pentagonal}W_{12}W_{13}W_{23}=W_{23}W_{12}\hs\hs\mathrm{and}\hs\hs V_{12}V_{13}V_{23}=V_{23}V_{12}.\end{equation}
By \cite[Proposition 2.11]{KV2}, we may identify $L^2(\G,h_L)$ and $L^2(\G,h_R)$, so we will simply use $\LTQ$ for this Hilbert space throughout the paper. 

Let $\LOQ$ denote the predual of $\LIQ$. Then the pre-adjoint of $\Gam$ induces an associative completely contractive multiplication on $\LOQ$, defined by
\begin{equation*}\star:\LOQ\pten\LOQ\ni f\ten g\mapsto f\star g=\Gam_*(f\ten g)\in\LOQ.\end{equation*}
The canonical $\LOQ$-bimodule structure on $\LIQ$ is given by
\begin{equation*}f\star x=(\id\ten f)\Gam(x)\hs\hs\mathrm{and}\hs\hs x\star f=(f\ten\id)\Gam(x)\end{equation*}
for $x\in\LIQ$, and $f\in\LOQ$. A \e{left invariant mean on $\LIQ$} is a state $m\in \LIQ^*$ satisfying
\begin{equation*}\la m,x\star f \ra=\la f,1\ra\la m,x\ra, \ \ \ x\in\LIQ, \ f\in\LOQ.\end{equation*}
Right and two-sided invariant means are defined similarly. A locally compact quantum group $\G$ is \e{amenable} if there exists a left invariant mean on $\LIQ$. It is known that $\G$ is amenable if and only if there exists a right (equivalently, two-sided) invariant mean. $\G$ is \e{co-amenable} if $\LOQ$ has a bounded left (equivalently, right or two-sided) approximate identity \cite[Theorem 3.1]{BT}. 
%We say that $\G$ is \e{discrete} if $\LOQ$ is unital, in which case we denote $\LOQ$ by $\ell^1(\G)$.

For general $\G$, the \e{left regular representation} $\lm:\LOQ\rightarrow\BLTQ$ is defined by
\begin{equation*}\lm(f)=(f\ten\id)(W), \ \ \ f\in\LOQ,\end{equation*}
and is an injective, completely contractive homomorphism from $\LOQ$ into $\BLTQ$. Then $\LIQH:=\{\lm(f) : f\in\LOQ\}''$ is the von Neumann algebra associated with the dual quantum group $\h{\G}$. Analogously, we have the \e{right regular representation} $\rho:\LOQ\rightarrow\BLTQ$ defined by
\begin{equation*}\rho(f)=(\id\ten f)(V), \ \ \ f\in\LOQ,\end{equation*}
which is also an injective, completely contractive homomorphism from $\LOQ$ into $\BLTQ$. Then $\LIQHP:=\{\rho(f) : f\in\LOQ\}''$ is the von Neumann algebra associated to the quantum group $\h{\G}'$, and satisfies $\LIQHP=\LIQH'$.

If $G$ is a locally compact group, we let $\G_a=(\LI,\Gam_a,h_L,h_R)$ denote the \e{commutative} quantum group associated with the commutative von Neumann algebra $\LI$, where the co-product is given by $\Gam_a(f)(s,t)=f(st)$, and $h_L$ and $h_R$ are integration with respect to a left and right Haar measure, respectively. The fundamental unitaries in this case are given by
$$W_a\xi(s,t)=\xi(s,s^{-1}t), \ \ V_a\xi(s,t)=\xi(st,t)\Delta(t)^{1/2}, \ \ \ \xi\in L^2(G\times G).$$
The dual $\h{\G}_a$ of $\G_a$ is the \e{co-commutative} quantum group $\G_s=(\LG,\Gam_s,h)$, where $\LG$ is the left group von Neumann algebra with co-product $\Gam_s(\lm(t))=\lm(t)\ten\lm(t)$, and $h$ is the Plancherel weight. Then $L^1(\G_a)$ is the usual group convolution algebra $\LO$, and $L^1(\G_s)$ is the Fourier algebra $A(G)$.

%A quantum group $\G$ is said to be a \textit{Kac algebra} if $S=R$, and the modular element $\delta$ is affiliated to the center of $\LIQ$. In this case $\h{\G}$ is also a Kac algebra, and $\LOQ$ is a Banach $*$-algebra.

%We say that $\G$ is \e{compact} if $C_0(\G)$ is a unital $C^*$-algebra, in which case we denote $C_0(\G)$ by $C(\G)$. It is well-known that $\G$ is compact if and only if $\h{\G}$ is discrete. 
For general $\G$, we let $C_0(\G):=\overline{\hat{\lm}(\LOQH)}^{\norm{\cdot}}$ denote the \e{reduced quantum group $C^*$-algebra} of $\G$. Here,
$$\hat{\lm}(\hat{f})=(\hat{f}\ten\id)(\wh{W}), \ \ \ \hat{f}\in\LOQH,$$
where $\wh{W}=\sigma W^*\sigma$ is the left fundamental unitary of $\wh{\G}$, and $\sigma$ is the flip map on $\LTQ\ten\LTQ$. The operator dual $M(\G):=C_0(\G)^*$ is a completely contractive Banach algebra containing $\LOQ$ as a norm closed two-sided ideal via the map $\LOQ\ni f\mapsto f|_{C_0(\G)}\in M(\G)$. The co-product satisfies $\Gam(C_0(\G))\subseteq M(C_0(\G)\iten C_0(\G))$, where $M(C_0(\G)\ten^{\vee} C_0(\G))$ is the multiplier algebra of the $C^*$-algebra $C_0(\G)\iten C_0(\G)$, and $\iten$ is the injective operator space tensor product (which coincides with the minimal $C^*$-tensor norm on $C^*$-algebras).

We let $C_0^u(\G)$ be the \e{universal quantum group $C^*$-algebra} of $\G$ \cite{K}, and denote the canonical surjective $*$-homomorphism onto $C_0(\G)$ by $\Lambda_{\G}:C_0^u(\G)\rightarrow C_0(\G)$. The space $C_0^u(\G)^*$ then has the structure of a unital completely contractive Banach algebra such that the map $\LOQ\rightarrow C_0^u(\G)^*$ given by the composition of the inclusion $\LOQ\subseteq M(\G)$ and $\Lambda_{\G}^*:M(\G)\rightarrow C_0^u(\G)^*$ is a completely isometric homomorphism. It follows that $\LOQ$ is a norm closed two-sided ideal in $C_0^u(\G)^*$ \cite[Proposition 8.3]{K}. 

An element $\hat{b}'\in\LIQHP$ is a \e{completely bounded right multiplier} of $\LOQ$ if $\rho(f)\hat{b}'\in\rho(\LOQ)$ for all $f\in\LOQ$ and the induced map
\begin{equation*}m_{\hat{b}}^r:\LOQ\ni f\mapsto\rho^{-1}(\rho(f)\hat{b}')\in\LOQ\end{equation*}
is completely bounded on $\LOQ$. We let $\McbQr$ denote the space of completely bounded right multipliers of $\LOQ$, which is a completely contractive Banach algebra with respect to the norm
\begin{equation*}\norm{[\hat{b}_{ij}]}_{M_n(\McbQr)}=\norm{[m^r_{\hat{b}_{ij}}]}_{cb}.\end{equation*}
Completely bounded left multipliers are defined analogously and we denote by $\McbQl$ the corresponding completely contractive Banach algebra. 

Given $\hat{b}'\in\McbQr$, the adjoint $\Theta^r(\hat{b}'):=(m_{\hat{b}}^r)^*$ defines a normal completely bounded right $\LOQ$-module map on $\LIQ$. When $\hat{b}'=\rho(f)$, for some $f\in\LOQ$, the map $\Theta^r(\rho(f))$ is nothing but the convolution action of $\LOQ$ on $\LIQ$, that is,
$$\Theta^r(\rho(f))(x)=f\star x=(\id\ten f)\Gam(x), \ \ \ x\in\LIQ.$$
Moreover, the map
\begin{equation}\label{e:Theta}\Theta^r:\McbQr\cong\mc{CB}^{\sigma}_{\LOQ}(\LIQ)=\mc{CB}_{\LOQ}(C_0(\G),\LIQ)\end{equation}
induces a completely isometric isomorphism of completely contractive Banach algebras \cite{JNR}. The second equality follows from \cite[Proposition 4.1]{JNR}. Since $\LOQ$ is a right ideal in $C_0^u(\G)^*$, any $\mu\in C_0^u(\G)^*$ defines a completely bounded left $\LOQ$-module map on $\LOQ$ by right multiplication, and therefore by duality, a completely bounded multiplier in $\mc{CB}^{\sigma}_{\LOQ}(\LIQ)$. The resulting map will be denoted $\Theta^r(\mu)$ for simplicity.

\section{Finitely Presented Modules}\label{s:fp}

A right module $M$ over a unital ring $R$ is finitely presented if and only if there is an exact sequence
\begin{equation*}
\begin{tikzcd}
R^m \arrow[r] &R^n \arrow[r] &M \arrow[r] &0.
\end{tikzcd}
\end{equation*}
This algebraic formulation motivates the next definition. 

\begin{defn}\label{d:fp} Let $A$ be a completely contractive Banach algebra. An object $E$ in $\Amod_1$ is \textit{finitely presented} (respectively, \textit{topologically finitely presented}) if there is an exact (respectively, topologically exact) sequence in $\Amod_1$
\begin{equation}\label{e:fp1}
\begin{tikzcd}
A_+\pten F_1 \arrow[r] &A_+\pten F_2 \arrow[r, two heads, "\vphi"] &E,
\end{tikzcd}
\end{equation}
where $F_1$ and $F_2$ are finite-dimensional operator spaces and $\vphi$ is a complete quotient morphism.
\end{defn} 

\begin{remark}\label{r:1}
\begin{enumerate}
\item The (topological) exactness of (\ref{e:fp1}) in $\Amod_1$ means that all arrows are completely contractive $A$-module maps, and that the image of the first arrow is (norm dense in) the kernel of the second. 
\item Clearly, finite presentation implies topological finite presentation. Note that the topological adjective in the latter refers to it ``relations'' and not its algebraic generation. A more accurate choice of terminology would be finitely generated and topologically finitely related. However, for simplicity, we stick with the chosen terminology.
\item One is tempted to use exact sequences of the form
\begin{equation*}
\begin{tikzcd}
A_+\pten F_1 \arrow[r] & A_+\pten F_2 \arrow[r] &E\arrow[r] & 0
\end{tikzcd}
\end{equation*}
in Definition \ref{d:fp}. However, the quotient property of $\vphi$ will be important in what follows, and a surjective morphism need not be a complete quotient map, so this property would still have to be specified. We find the concise sequence (\ref{e:fp1}) appropriate for our needs.
\item We have defined finite presentation through an exact sequence of ``relatively free'' modules (see section \ref{ss:free}). Based on the purely algebraic definition, a perhaps more natural definition would make use of a stricter notion of freeness which incorporates both the operator space and $A$-module structure. As we will show in Proposition \ref{p:refp}, any (topologically) finitely presented $E\in\Amod_1$ can be written through a (topologically) exact sequence
\begin{equation*}
\begin{tikzcd}
A_+\pten T_m \arrow[r] & A_+\pten T_n \arrow[r, two heads, "\vphi"] &E
\end{tikzcd}
\end{equation*}
with $\vphi$ a complete $\lm$-quotient morphism for some $\lm\geq 1$. (It is not always possible to take $\lm=1$; see section \ref{s:coexact} for a detailed analysis.) Hence, any finitely presented module is derived through an exact sequence of finitely generated matricially free modules.
\end{enumerate}
\end{remark}

We now present elementary examples, fixing a completely contractive Banach algebra $A$.

\begin{example}\label{ex:fp1} Let $F$ be a finite-dimensional operator space. Then $A_+\pten F$ is finitely presented. In particular, when $A=\C$, any finite-dimensional operator space $F$ is finitely presented. Since the converse is obvious, an operator space is finitely presented if and only if it is finite dimensional. 
\end{example}

\begin{example}\label{ex:fp2} Let $P$ be $\lm$-projective in $\Amod$ and finitely generated in the sense that there is a complete quotient morphism $q:A_+\pten F\twoheadrightarrow P$ for some finite-dimensional operator space $F$. Then by projectivity, for every $\ep>0$ there exists $\vphi:P\rightarrow A_+\pten F$ for which $q\circ\vphi=\id_P$ and $\norm{\vphi}_{cb}<\lm+\ep$. The morphism $\vphi\circ q:A_+\pten F\rightarrow A_+\pten F$ is a projection onto $\vphi(P)$, and $\psi:=\id_{A_+\pten F}-\vphi\circ q$ is a projection onto $\mathrm{Ker}(\vphi\circ q)=\mathrm{Ker}(q)$. Normalizing $\psi$, we obtain the exact sequence in $\Amod_1$:
\begin{equation*}
\begin{tikzcd}
A_+\pten F \arrow[r, "\psi/\norm{\psi}_{cb}"] &A_+\pten F \arrow[r, two heads, "q"] &P.
\end{tikzcd}
\end{equation*}
Hence, $P$ is finitely presented.
\end{example}

We now show that every finitely presented module is derived through an exact sequence of matricially free modules, as promised in Remark \ref{r:1} (4). We require the following basic fact several times in the proof.

\begin{lem}\label{l:refp} Let $F$ be a finite-dimensional operator space. Then there exists $n\in\N$, $\lm\geq1$ and a complete $\lm$-quotient map $\vphi:T_n\twoheadrightarrow_\lm F$.
\end{lem}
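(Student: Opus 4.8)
The plan is to reduce the statement to a dimension count together with the fact that every linear map out of a finite-dimensional operator space is automatically completely bounded.

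First I would set $n:=\dim F$ (any $n$ with $n^{2}\geq\dim F$ would do). Since $\dim T_{n}=n^{2}\geq n=\dim F$, there is a surjective linear map $\varphi\colon T_{n}\to F$, which we may assume completely contractive after rescaling. I would then pass to the induced bijection $\widetilde{\varphi}\colon T_{n}/\mathrm{Ker}(\varphi)\to F$; here $\mathrm{Ker}(\varphi)$ is automatically closed and $T_{n}/\mathrm{Ker}(\varphi)$ carries its quotient operator space structure.

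The crux is to observe that $\widetilde{\varphi}^{-1}\colon F\to T_{n}/\mathrm{Ker}(\varphi)$ is completely bounded, which follows from the general fact that any linear map $u\colon V\to W$ out of a finite-dimensional operator space $V$ is completely bounded. To prove this, fix a basis $v_{1},\dots,v_{d}$ of $V$ with dual basis $f_{1},\dots,f_{d}\in V^{*}$; writing $x_{s,t}=\sum_{l}f_{l}(x_{s,t})v_{l}$ gives, for $[x_{s,t}]\in M_{k}(V)$, the identity $u^{(k)}([x_{s,t}])=\sum_{l=1}^{d}[f_{l}(x_{s,t})]\otimes u(v_{l})$ in $M_{k}(W)=M_{k}\otimes W$, whence
$$\norm{u^{(k)}([x_{s,t}])}\leq\sum_{l=1}^{d}\norm{[f_{l}(x_{s,t})]}_{M_{k}}\,\norm{u(v_{l})}\leq\Big(\sum_{l=1}^{d}\norm{f_{l}}\,\norm{u(v_{l})}\Big)\norm{[x_{s,t}]}_{M_{k}(V)},$$
using $\norm{A\otimes w}_{M_{k}(W)}=\norm{A}_{M_{k}}\norm{w}$ and $\norm{f_{l}}_{cb}=\norm{f_{l}}$ for functionals on operator spaces. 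Hence $\norm{u}_{cb}<\infty$.

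Finally, applying this with $u=\widetilde{\varphi}^{-1}$ and setting $\lambda:=\max\{1,\norm{\widetilde{\varphi}^{-1}}_{cb}\}$, the parenthetical characterisation of complete $\lambda$-quotient maps in Section \ref{ss:catnotions} shows $\varphi\colon T_{n}\twoheadrightarrow_{\lambda}F$ is a complete $\lambda$-quotient map, as desired. I do not expect any real obstacle here; the only subtlety worth flagging is that one cannot hope for $\lambda=1$ in general, as that would force $F^{*}$ to embed completely isometrically into some $M_{n}$, which already fails for $F=\min(\ell^{2}_{2})$. The finiteness (but possible largeness) of $\lambda$ is exactly what finite-dimensionality buys, via the automatic complete boundedness above. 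A dual variant would instead take any linear injection $F^{*}\hookrightarrow M_{n}=T_{n}^{*}$, note it is automatically a complete $\lambda$-embedding, and pass to pre-adjoints via the embedding/quotient duality; I prefer the direct argument since it avoids invoking that duality.
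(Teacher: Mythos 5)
Your proof is correct and follows essentially the same route as the paper: both take $n=\dim F$, produce a surjective complete contraction $\varphi\colon T_n\to F$, and then conclude from finite-dimensionality that the induced bijection $\widetilde{\varphi}\colon T_n/\mathrm{Ker}(\varphi)\to F$ automatically has completely bounded inverse, which is exactly the complete $\lambda$-quotient property. The only cosmetic difference is that the paper builds the surjection by factoring through the diagonal projection $T_n\twoheadrightarrow\max\ell^1_n$ (so no rescaling is needed) and leaves the automatic complete boundedness implicit with a citation, whereas you prove it; your closing aside is right in spirit, though the specific claim about $\max(\ell^2_2)$ not embedding completely isometrically into any $M_n$ is less clear-cut than the three-dimensional examples ($T_3$, $\max\ell^1_3$) invoked in Section~\ref{s:coexact}.
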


\begin{proof} Let $n=\dim(F)$. Pick a bijective contraction $\ell^1_{n}\rightarrow F$, which is automatically a complete contraction when $\ell^1_{n}$ is equipped with its max operator space structure (see, e.g., \cite[3.3.10]{ER}). Composing this complete contraction with the canonical complete quotient map $T_{n}\twoheadrightarrow\max\ell^1_{n}$ (project to diagonal), we obtain a surjective complete contraction $\vphi:T_{n}\rightarrow F$, which, by finite-dimensionality, is a complete $\lm$-quotient map for some $\lm$ (take $\lm=n$ for instance).
\end{proof}

\begin{prop}\label{p:refp} Let $A$ be a completely contractive Banach algebra. If $E\in\Amod_1$ is (topologically) finitely presented then there exist $\lm\geq1$, $n,m\in\N$ and a (topologically) exact sequence 
\begin{equation}\label{e:refp}
\begin{tikzcd}
A_+\pten T_m \arrow[r] & A_+\pten T_n \arrow[r, two heads, "\vphi"] &E
\end{tikzcd}
\end{equation}
in $\Amod_1$ where $\vphi$ is a complete $\lm$-quotient map.
\end{prop}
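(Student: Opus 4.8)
The plan is to start from a (topologically) exact sequence
\begin{equation*}
\begin{tikzcd}
A_+\pten F_1 \arrow[r, "\psi"] & A_+\pten F_2 \arrow[r, two heads, "q"] & E
\end{tikzcd}
\end{equation*}
with $F_1,F_2$ finite-dimensional and $q$ a complete quotient morphism, and to replace each $F_i$ by a suitable $T_{n_i}$ using Lemma~\ref{l:refp}. First I would apply Lemma~\ref{l:refp} to $F_2$: pick $n\in\N$, $\mu\geq 1$ and a complete $\mu$-quotient map $\sigma:T_n\twoheadrightarrow_\mu F_2$. By relative freeness (\ref{e:relfree}), the linear complete contraction $T_n\xrightarrow{\sigma}F_2\hookrightarrow A_+\pten F_2$ extends uniquely to a morphism $\widetilde{\sigma}:A_+\pten T_n\to A_+\pten F_2$ in $\Amod_1$, and since $\id_{A_+}\pten\sigma$ is a complete $\mu$-quotient map (the functor $A_+\pten(\cdot)$ preserves complete quotient maps, by the quotient property of $\pten$, \cite[Proposition 7.1.7]{ER}), the composition $q\circ\widetilde{\sigma}:A_+\pten T_n\to E$ is a complete $\lm_0$-quotient map with $\lm_0\leq\mu$ (composition of a $\mu$-quotient and a $1$-quotient).

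Next I need to identify the kernel of $q\circ\widetilde\sigma$ well enough to present it as the closure of the image of a morphism out of some $A_+\pten T_m$. The key observation is that, in $\Amod_1$, one has $\overline{\ker(q\circ\widetilde\sigma)}=\overline{(\id\pten\sigma)^{-1}(\overline{\psi(A_+\pten F_1)})}$: since $q\circ\widetilde\sigma$ factors as $q$ after the surjection $\id\pten\sigma$, and $\overline{\ker q}=\overline{\psi(A_+\pten F_1)}$ by (topological) exactness, the preimage under the surjective $\id\pten\sigma$ of this closed submodule is the closure of $\ker(q\circ\widetilde\sigma)$. Now $\overline{(\id\pten\sigma)^{-1}(\overline{\psi(A_+\pten F_1)})}$ is the closed submodule of $A_+\pten T_n$ generated by $(\id\pten\sigma)^{-1}(\psi(1\ten F_1))$ together with $\ker(\id\pten\sigma)$. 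Each of these is (the image of, resp. generated by the image of) a finite-dimensional operator space: $\psi(1\ten F_1)$ is finite-dimensional, hence so is any finite-dimensional subspace of $A_+\pten T_n$ mapping onto it under $\id\pten\sigma$; and $\ker(\id\pten\sigma)$ is a submodule of the form $A_+\pten(\ker\sigma)\oplus(\text{finite-dim correction})$ — more carefully, since $\ker\sigma\subseteq T_n$ is finite-dimensional, the submodule $\ker(\id\pten\sigma)$ is the closure of $A_+\cdot(\ker\sigma\text{ together with }1\otimes(\text{a finite-dim space}))$. Combining, $\overline{\ker(q\circ\widetilde\sigma)}$ is a closed submodule of $A_+\pten T_n$ of the form $\overline{A_+\cdot G}$ for some finite-dimensional operator space $G\subseteq A_+\pten T_n$.

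Finally, I would apply Lemma~\ref{l:refp} to $G$ to get $m\in\N$, $\nu\geq1$ and a complete $\nu$-quotient $\tau:T_m\twoheadrightarrow_\nu G\subseteq A_+\pten T_n$; relative freeness again extends the linear complete contraction $T_m\to A_+\pten T_n$ to a morphism $\widetilde\tau:A_+\pten T_m\to A_+\pten T_n$ in $\Amod_1$ whose image has closure $\overline{A_+\cdot G}=\overline{\ker(q\circ\widetilde\sigma)}$. Then
\begin{equation*}
\begin{tikzcd}
A_+\pten T_m \arrow[r, "\widetilde\tau"] & A_+\pten T_n \arrow[r, two heads, "q\circ\widetilde\sigma"] & E
\end{tikzcd}
\end{equation*}
is the desired (topologically) exact sequence with $\vphi:=q\circ\widetilde\sigma$ a complete $\lm$-quotient, $\lm:=\lm_0\leq\mu$. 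In the non-topological (exact) case, one must check the images themselves, not just their closures, coincide: this works because $\sigma$ is surjective (not just dense-range) — preimage of a closed submodule under a surjection is exactly that submodule's preimage — and because a finite-dimensional operator space $G$ is the \emph{image}, not merely the dense image, of $T_m$ under a complete $\nu$-quotient map (Lemma~\ref{l:refp} gives surjectivity), so $A_+\cdot G$ is literally the image of $\widetilde\tau$ when $A$ is... — here I should be careful: $\widetilde\tau(A_+\pten T_m)$ is the linear span $A_+\cdot\tau(T_m)=A_+\cdot G$, which need not be closed, but it equals $\ker(q\circ\widetilde\sigma)$ on the nose in the algebraically exact case provided $\ker q=\psi(A_+\pten F_1)$ exactly and the preimage computations are exact; I would carry out this bookkeeping in the exact case by the same surjectivity arguments.

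The main obstacle I anticipate is the middle step: cleanly verifying that $\overline{\ker(q\circ\widetilde\sigma)}$ (or $\ker(q\circ\widetilde\sigma)$ in the exact case) is genuinely of the form $\overline{A_+\cdot G}$ (resp. $A_+\cdot G$) for a \emph{finite-dimensional} $G$ — in particular handling the contribution of $\ker(\id_{A_+}\pten\sigma)$, which is where the unitization and the precise structure of $\pten$ interact, and confirming that this kernel is finitely generated as an $A_+$-module when $\ker\sigma$ is finite-dimensional. Everything else (Lemma~\ref{l:refp}, relative freeness, the quotient property of $\pten$, and stability of complete $\lm$-quotient maps under composition) is routine given the earlier results in the excerpt.
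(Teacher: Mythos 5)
Your proposal is correct and reaches the same conclusion as the paper, but the middle of the argument is organized differently, so a comparison is worthwhile. Both proofs begin by replacing $F_2$ with some $T_n$ via Lemma~\ref{l:refp} and amplifying, and both must then realize $\mathrm{Ker}(\vphi\circ(\id\ten\sigma))$ as the (closure of the) image of a single $A_+\pten T_m$. The paper does this by first covering $F_1$ by some $T_{m_1}$, lifting the resulting morphism $\psi_1:A_+\pten T_{m_1}\to A_+\pten F_2$ through $\id\ten\sigma$ using the $1$-\emph{projectivity} of $A_+\pten T_{m_1}$, and then adjoining a separate cover $T_{m_2}\twoheadrightarrow\mathrm{Ker}(\sigma)$, combining the two inside $T_{m_1+m_2}$ via the $\ell^1$-direct sum. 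You instead lift only the finite-dimensional generating subspace $\psi(1\ten F_1)$ of $\mathrm{Ker}(q)$ through the surjection $\id\ten\sigma$ (which requires only surjectivity, not projectivity), adjoin $1\ten\mathrm{Ker}(\sigma)$, and apply Lemma~\ref{l:refp} once to the resulting finite-dimensional $G$; a small bonus is that your first arrow is then automatically completely contractive, so the normalization step in the paper's proof is not needed. The two points you flag as delicate are exactly where the paper supplies citations, and they do close up: the identification $\mathrm{Ker}(\id_{A_+}\ten\sigma)=A_+\pten\mathrm{Ker}(\sigma)$ holds with no ``finite-dimensional correction'' --- it is the cb-analogue of the kernel description for tensor products of complete quotient maps \cite[Proposition 7.1.7]{ER}, and since $A_+$ is unital this kernel is indeed the closed submodule generated by $1\ten\mathrm{Ker}(\sigma)$; and your claim that the preimage of $\overline{A_+\cdot\psi(1\ten F_1)}$ is $\overline{\mathrm{Ker}(\id\ten\sigma)+A_+\cdot G_1}$ follows from the openness of the complete quotient map $\id\ten\sigma$. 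With these in hand the algebraically exact case also works on the nose: $\widetilde\tau(A_+\pten T_m)$ is a linear subspace of $\mathrm{Ker}(q\circ(\id\ten\sigma))$ containing $\mathrm{Ker}(\id\ten\sigma)=A_+\pten\mathrm{Ker}(\sigma)$ whose image under $\id\ten\sigma$ is all of $\psi(A_+\pten F_1)$, and any such subspace must equal the full preimage $\mathrm{Ker}(q\circ(\id\ten\sigma))$.
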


\begin{proof} Suppose $E$ is topologically finitely presented via the topologically exact sequence 
\begin{equation*}
\begin{tikzcd}
A_+\pten F_1 \arrow[r, "\psi"] & A_+\pten F_2 \arrow[r, two heads, "\vphi"] &E,
\end{tikzcd}
\end{equation*}
where $\vphi$ is a complete quotient morphism and $F_1$ and $F_2$ are finite-dimensional operator spaces.  By Lemma \ref{l:refp} there exists a complete $\nu$-quotient map $T_{m_1}\rightarrow F_1$ for some $m_1\in\N$, $\nu\geq1$. Amplifying this map with $\id_{A_+}$, and composing with $\psi$ yields a morphism $\psi_1: A_+\pten T_{m_1}\rightarrow A_+\pten F_2$ whose image is dense in $\mathrm{Ker}(\vphi)$. 

By Lemma \ref{l:refp} once again, there exists a complete $\lm$-quotient map $\vphi_1:T_{n}\twoheadrightarrow F_2$ for some $\lm\geq1$ and $n\in\N$. By definition of the projective norm, its amplification $\id_{A_+}\ten\vphi_1:A_+\pten T_{n}\twoheadrightarrow A_+\pten F_2$ is also a complete $\lm$-quotient map (see \cite[Proposition 7.17]{ER} for the case $\lm=1$; its proof generalizes verbatim). By 1-projectivity of $A_+\pten T_{m_1}$ in $\Amod$ (see \ref{ss:catnotions}), there exists a completely bounded $A$-module map $\widetilde{\psi_1}:A_+\pten T_{m_1}\rightarrow A_+\pten T_{n}$ making the following diagram commute
\begin{equation*}
\begin{tikzcd}
                     &A_+\pten T_{n} \arrow[d, two heads, "\id\ten\vphi_1"]\\
A_+\pten T_{m_1} \arrow[ru, dotted, "\widetilde{\psi_1}"] \arrow[r, "\psi_1"] &A_+\pten F_2
\end{tikzcd}
\end{equation*}
(where we've implicitly used the $cb$-isomorphism $A_+\pten F_2\cong A_+\pten T_n/\mathrm{Ker}(\id\ten\vphi_1)$ to apply projectivity, hence the lack of norm control on $\widetilde{\psi_1}$).
Then 
$$\mathrm{Ker}(\vphi)=\overline{\psi_1(A_+\pten T_{m_1})}=\overline{(\id\ten\vphi_1)(\widetilde{\psi_1}(A_+\pten T_{m_1}))}.$$
Also, the cb-analogue of the kernel description for tensor products of complete quotient maps \cite[Proposition 7.1.7]{ER} implies $\mathrm{Ker}(\id\ten \vphi_1)=A_+\pten\mathrm{Ker}(\vphi_1)$. Hence
$$\mathrm{Ker}(\vphi\circ(\id\ten\vphi_1))=\overline{\widetilde{\psi_1}(A_+\pten T_{m_1})+A_+\pten\mathrm{Ker}(\vphi_1)}.$$
By finite-dimensionality of $\mathrm{Ker}(\vphi_1)$, there exists a complete $\mu$-quotient map $\psi_2:T_{m_2}\twoheadrightarrow \mathrm{Ker}(\vphi_1)$ (Lemma \ref{l:refp}) for some $\mu\geq1$ and $m_2\in\N$. Then $A_+\pten\mathrm{Ker}(\vphi_1)=(\id_{A_+}\ten\psi_2)(A_+\pten T_{m_2})$, and composing 
$$\widetilde{\psi_1}\oplus(\id_{A_+}\ten\psi_2):A_+\pten T_{m_1}\oplus_1 A_+\pten T_{m_2}=A_+\pten(T_{m_1}\oplus_1 T_{m_2})\rightarrow A_+\pten T_{n}$$
with the amplified conditional expectation $\id_{A_+}\pten T_{m_1+m_2}\twoheadrightarrow \id_{A_+}\pten (T_{m_1}\oplus_1 T_{m_2})$, we obtain a completely bounded morphism $\widetilde{\psi}:A_+\pten T_m\rightarrow A_+\pten T_{n}$ whose image is dense in $\mathrm{Ker}(\vphi\circ(\id\ten\vphi_1))$, where $m=m_1+m_2$. Upon normalization we may assume $\norm{\widetilde{\psi}}_{cb}\leq 1$, so that
\begin{equation}\label{e:fpdefn}
\begin{tikzcd}
A_+\pten T_m \arrow[r, "\widetilde{\psi}"] &A_+\pten T_{n} \arrow[r, two heads, "\vphi\circ(\id\ten\vphi_1)"] &E,
\end{tikzcd}
\end{equation}
is topologically exact in $\Amod_1$ with $\vphi\circ(\id\ten\vphi_1)$ a complete $\lm$-quotient map.

The claim for finitely presented modules follows similarly (with the closures removed).
\end{proof}

\begin{remark} It is not clear when the converse of Proposition \ref{p:refp} holds (beyond finite-dimensionality of $A$).
\end{remark}

%The above reformulation of finite presentation has the advantage that the surjection can be taken a complete quotient map, although this is done at the cost of replacing matrically free operator modules by ``relatively free'' ones. One could have taken Proposition \ref{p:refp} as the definition of finite presentation, but the author believes the approach via matrically free modules to be more natural (besides generating independent interest), and that both formulations will be useful going forward.

We now show that the category $\Amod$ is locally finitely presented in the sense that any module $X\in\Amod$ is a direct limit of topologically finitely presented ones, adapting the argument from \cite[Lemma 5.39]{R}.

Let $X,Y\in\Amod$ with $Y$ a closed submodule of $X$. Let $\mc{S}$ be a set of closed submodules of $X$ partially ordered by inclusion and $\mc{T}$ be a set of closed submodules of $Y$ partially ordered by inclusion. Following \cite{R} we say that the quadruple $(X,Y,\mc{S},\mc{T})$ forms an \textit{$(X,Y)$-system} if
\begin{enumerate}
\item $\mc{S}$ and $\mc{T}$ are directed sets;
\item $X=\overline{\cup_{S\in\mc{S}} S}$ and $Y=\overline{\cup_{T\in\mc{T}} T}$;
\item for each $T\in\mc{T}$, there exists $S\in\mc{S}$ such that $S\supseteq T$. 
\end{enumerate}
Any $(X,Y)$-system gives rise to a canonical directed set
$$I:=I(X,Y,\mc{S},\mc{T})=\{(S,T)\in\mc{S}\times\mc{T}\mid S\supseteq T\},$$
where $(S_1,T_1)\leq (S_2,T_2)$ if $S_1\subseteq S_2$ and $T_1\subseteq T_2$. The family $(S/T,\vphi_{S,T,S',T'})_{(S,T),(S',T')\in I}$ forms an inductive system where the connecting maps $\vphi_{S,T,S',T'}:S/T\rightarrow S'/T'$ are the composition of the inclusion $S/T\rightarrow S'/T$ and the quotient $S'/T\rightarrow S'/T'$, whenever $(S,T)\leq (S',T')\in I$.

\begin{prop}\label{p:lfp} Let $A$ be a completely contractive Banach algebra and let $(X,Y,\mc{S},\mc{T})$ be an $(X,Y)$-system in $\Amod$. Then $\varinjlim_{(S,T)\in I}S/T\cong X/Y$ completely isometrically.
\end{prop}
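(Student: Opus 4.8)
The plan is to verify the universal property of the direct limit for $X/Y$ equipped with the canonical morphisms $\pi_{S,T}\colon S/T\to X/Y$ (the composition of the inclusion $S/T\hookrightarrow X/T$ followed by the quotient $X/T\twoheadrightarrow X/Y$, which is well defined since $T\subseteq Y$). By Proposition \ref{p:limitrep} it suffices to show that $X/Y$, together with these maps, satisfies the universal property (\ref{d:co-limit}); uniqueness of the direct limit then gives the completely isometric identification. First I would check compatibility: for $(S,T)\le (S',T')$ in $I$ one has $\pi_{S',T'}\circ\vphi_{S,T,S',T'}=\pi_{S,T}$, which is immediate from the definitions of the connecting maps. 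Each $\pi_{S,T}$ is completely contractive, being a composition of a complete isometry with a complete quotient map.

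Next, given $Y'\in\Amod$ and a uniformly completely bounded family $\psi_{S,T}\colon S/T\to Y'$ with $\|\psi_{S,T}\|_{cb}\le\lambda$ compatible with the connecting maps, I would construct the induced morphism $\psi\colon X/Y\to Y'$. The natural route: first define a map on $\bigcup_{S\in\mc{S}} S$ by $x\mapsto\psi_{S,T}(x+T)$ for any $(S,T)\in I$ with $x\in S$; compatibility of the $\psi_{S,T}$ shows this is independent of the choice of $(S,T)$, using condition (3) of an $(X,Y)$-system and directedness of $I$ to find a common upper bound. This yields a well-defined $A$-module map $\tilde\psi_0$ on the dense submodule $\bigcup_S S$ of $X$. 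To see $\tilde\psi_0$ is completely bounded with $\|\tilde\psi_0\|_{cb}\le\lambda$ on this submodule, one estimates $\|\tilde\psi_0(x)\|=\|\psi_{S,T}(x+T)\|\le\lambda\|x+T\|_{S/T}=\lambda\,\mathrm{dist}(x,T)$, and then takes an infimum over $T\in\mc{T}$: since $Y=\overline{\bigcup_{T\in\mc{T}}T}$, one gets $\inf_{T\in\mc{T}}\mathrm{dist}(x,T)=\mathrm{dist}(x,Y)$. So $\|\tilde\psi_0(x)\|\le\lambda\,\mathrm{dist}(x,Y)$, and the matrix-level version of the same argument gives $\|\tilde\psi_0\|_{cb}\le\lambda$ as a map into $Y'$ vanishing on $\bigcup_T T$, hence factoring through $\bigcup_S S/\overline{\bigcup_T T}\cap(\bigcup_S S)$; by density this extends uniquely to a morphism $\psi\colon X/Y\to Y'$ with $\|\psi\|_{cb}\le\lambda$ and $\psi\circ\pi_{S,T}=\psi_{S,T}$. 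Uniqueness of $\psi$ is clear since the images of the $\pi_{S,T}$ are norm dense in $X/Y$ (as $\bigcup_S S$ is dense in $X$ and the quotient map is continuous).

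The main obstacle I expect is the norm computation showing $\|\tilde\psi_0(x)\|\le\lambda\,\mathrm{dist}(x,Y)$ — and, critically, its completely bounded analogue at every matrix level $M_n$ — since this is where the three axioms of an $(X,Y)$-system are genuinely used. Specifically, one must interchange the infimum over $T\in\mc{T}$ with the distance functional and be careful that, for a matrix $[x_{k,l}]\in M_n(S)$, one can choose a \emph{single} $T$ (by directedness of $\mc{T}$, possibly after enlarging $S$ via axiom (3)) that nearly realizes $\mathrm{dist}_{M_n(X/T)}([x_{k,l}])$ within $\varepsilon$ of $\mathrm{dist}_{M_n(X/Y)}([x_{k,l}])$; this uses that $M_n(Y)=\overline{\bigcup_{T}M_n(T)}$ and the formula $\|[x_{k,l}]+M_n(Y)\|=\inf_{T}\|[x_{k,l}]+M_n(T)\|$. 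The rest — compatibility, $A$-linearity, density, uniqueness — is routine bookkeeping. Once the universal property is established, Proposition \ref{p:limitrep} (or directly the uniqueness of objects satisfying a universal property) yields $\varinjlim_{(S,T)\in I}S/T\cong X/Y$ completely isometrically, completing the proof.
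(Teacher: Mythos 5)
Your proposal is correct and follows essentially the same route as the paper: both verify the universal property of the direct limit for $X/Y$ with the canonical maps $S/T\to X/Y$, define the induced morphism on $\bigcup_{S\in\mc{S}}S$ by $x\mapsto\psi_{S,T}(x+T)$, and obtain the norm bound by taking the infimum of $\norm{x+T}$ over $T\in\mc{T}$ using directedness and $Y=\overline{\bigcup_{T\in\mc{T}}T}$. Your explicit attention to the matrix-level version of the infimum argument (choosing a single $T$ for all entries via directedness) fills in a step the paper passes over with ``as each $f_{S,T}$ is so,'' so no gap remains.
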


\begin{proof} We show that $X/Y$ satisfies the universal property (\ref{d:co-limit}) in $\Amod$ with respect to the canonical morphisms $\vphi_{S,T}:S/T\rightarrow X/Y$. To this end, let $Z\in\Amod$ and let $(f_{S,T})_{(S,T)\in I}$ be a family of morphisms $S/T\rightarrow Z$ such that $\norm{f_{S,T}}_{cb}\leq\lm$ for all $(S,T)\in I$ and the following diagram commutes for all $(S,T)\leq(S',T')\in I$:

\begin{equation*}
\begin{tikzcd}
S/T \arrow[dd, "\vphi_{S,T,S',T'}"]\arrow[rd, "f_{S,T}"]\\
&Z\\
S'/T',\arrow[ru, "f_{S',T'}"]
\end{tikzcd}
\end{equation*} 

Given $x\in\cup_{S\in\mc{S}} S$, we have $x\in S$ for some $S\in\mc{S}$. If $T\in\mc{T}$, there exists $S'\in\mc{S}$ such that $S'\supseteq T$. Since $\mc{S}$ is directed, pick $S''\in\mc{S}$ with $S''\supseteq S,S'$. Then $(S'',T)\in I$ with $x\in S''$. Define $\vphi(x+Y)=f_{S'',T}(x+T)$. To see that $\vphi$ is well-defined, first note that the definition is independent of the chosen $f_{S'',T}$ for which $(S'',T)\in I$ and $x\in S''$ by commutativity of the above diagrams. Moreover, if $S,S'\in\mc{S}$, $x\in S$ and $x'\in S'$ with $x-x'\in Y$, for every $\ep>0$, there exists $T\in\mc{T}$ and $b\in T$ with $\norm{(x-x')-b}_Y<\ep$. There exists $S''\in\mc{S}$ with $S''\supseteq S,S',T$, so that $(S'',T)\in I$ and 
$$\norm{\vphi(x+Y)-\vphi(x'+Y)}=\norm{f_{S'',T}((x-x')+T)}\leq\lm\norm{(x-x')+T}<\ep.$$
By a similar argument as above it follows that $\norm{\vphi(x+Y)}\leq\lm\norm{x+T}$ for all $T\in\mc{T}$ such that $T\subseteq S$ for some $S\in\mc{S}$ with $x\in S$. Since $\mc{S}$ is directed and $Y=\overline{\cup_{T\in\mc{T}}T}$, it follows that $\norm{\vphi(x+Y)}\leq\lm\norm{x+Y}$. Thus, $\vphi$ extends to a well-defined bounded module map $X/Y\rightarrow Z$. It follows that $\vphi$ is actually completely bounded with $\norm{\vphi}_{cb}\leq\lm$ as each $f_{S,T}$ is so, and that $\vphi\circ\vphi_{S,T}=f_{S,T}$ for all $(S,T)\in I$. Uniqueness follows from density of $\cup_{(S,T)\in I}\vphi_{S,T}(S/T)$ in $X/Y$.
\end{proof}

\begin{thm}\label{t:lfp} Let $A$ be a completely contractive Banach algebra. Every $X\in\Amod$ is a direct limit of topologically finitely presented modules.\end{thm}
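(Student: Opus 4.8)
The plan is to realize an arbitrary $X\in\Amod$ as a quotient $\widetilde{X}/Y$ of a module $\widetilde{X}$ built from relatively free pieces, and then apply Proposition \ref{p:lfp} to an appropriately chosen $(\widetilde{X},Y)$-system whose quotients $S/T$ are topologically finitely presented. Concretely, I would first choose a complete quotient map $q\colon A_+\pten V\twoheadrightarrow X$ for a suitable operator space $V$ — for instance $V=\mc{F}(X)$ with the identity map, using relative freeness (\ref{e:relfree}) — so that $X\cong (A_+\pten V)/Y$ with $Y=\mathrm{Ker}(q)$. Write $\widetilde{X}=A_+\pten V$.

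Next I would build the two directed sets. For $\mc{S}$, take all submodules of the form $A_+\pten F$ where $F$ ranges over the finite-dimensional subspaces of $V$ (or rather the closures $\overline{\mathrm{span}_{A_+}(F)}$); these are directed under inclusion since finite-dimensional subspaces of $V$ are, and their union is dense in $\widetilde{X}$ because $V$ generates $\widetilde{X}$ as a module and every element of $V$ lies in some finite-dimensional subspace. For $\mc{T}$, take the set of closed submodules of $Y$ that are \emph{finitely generated as submodules} — i.e. of the form $\overline{\mathrm{span}_{A_+}\{y_1,\dots,y_k\}}$ with $y_1,\dots,y_k\in Y$ — which is again directed (join two such by taking the union of the generating sets). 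Condition (3) of the $(\widetilde{X},Y)$-system definition needs a small argument: given finitely many $y_1,\dots,y_k\in Y\subseteq A_+\pten V$, each $y_i$ lies in $\overline{\mathrm{span}_{A_+}(F_i)}=A_+\pten F_i$ for some finite-dimensional $F_i\subseteq V$ (by density of the algebraic module span and a perturbation argument to absorb norm-convergence), so taking $F\supseteq F_1+\cdots+F_k$ gives $S=A_+\pten F\in\mc{S}$ with $S\supseteq T$. With conditions (1)--(3) verified, Proposition \ref{p:lfp} yields $\varinjlim_{(S,T)\in I}S/T\cong \widetilde{X}/Y=X$ completely isometrically.

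It then remains to see that each $S/T$ appearing in this system is topologically finitely presented. Here $S=A_+\pten F$ with $F$ finite-dimensional, and $T=\overline{\mathrm{span}_{A_+}\{y_1,\dots,y_k\}}\subseteq S$. By relative freeness, the linear map $\C^k\ni e_j\mapsto y_j\in S$ extends to an $A$-module morphism $\psi\colon A_+\pten \C^k\to S$ (after scaling so it is completely contractive) whose image is exactly $\mathrm{span}_{A_+}\{y_1,\dots,y_k\}$, hence norm-dense in $T$. Composing with the quotient $S=A_+\pten F\twoheadrightarrow (A_+\pten F)/T=S/T$ gives a topologically exact sequence
\begin{equation*}
\begin{tikzcd}
A_+\pten \C^k \arrow[r,"\psi"] & A_+\pten F \arrow[r,two heads] & S/T
\end{tikzcd}
\end{equation*}
in $\Amod_1$ with the second arrow a complete quotient map and both $\C^k$ and $F$ finite-dimensional; thus $S/T$ is topologically finitely presented in the sense of Definition \ref{d:fp}. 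Combining this with the direct-limit identification completes the proof.

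The main obstacle I anticipate is purely bookkeeping rather than conceptual: verifying condition (3), namely that every finitely generated closed submodule of $Y$ is contained in some $A_+\pten F$ with $F\subseteq V$ finite-dimensional. Since the elements $y_i$ of a generating set need not lie in the \emph{algebraic} module span of a finite-dimensional subspace of $V$ but only in its closure, one has to be slightly careful — either enlarge $\mc{T}$ to allow closed submodules generated by countably many elements converging suitably, or argue that the closure $\overline{\mathrm{span}_{A_+}\{y_1,\dots,y_k\}}$ is unaffected by replacing each $y_i$ by a nearby element of $\mathrm{span}_{A_+}(F_i)$ and passing to the limit inside the direct system (small perturbations are absorbed because we only need density of the union, not equality of individual submodules). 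This is exactly the kind of approximation step handled in \cite[Lemma 5.39]{R}, and it adapts without difficulty to the operator module setting.
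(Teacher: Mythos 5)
Your overall strategy---realize $X$ as a quotient of a relatively free module, build an $(X,Y)$-system whose first coordinates are of the form $A_+\pten F$ with $F$ finite-dimensional, apply Proposition \ref{p:lfp}, and check that each $S/T$ is topologically finitely presented---is the same as the paper's. But there is a genuine gap at condition (3) of the $(X,Y)$-system, and the step you flag as ``bookkeeping'' is in fact the crux of the proof. The claim that each $y_i\in Y\subseteq A_+\pten V$ lies in $A_+\pten F_i$ for some finite-dimensional $F_i\subseteq V$ is false: a general element of a projective tensor product is an infinite sum $\sum_n a_n\ten v_n$ whose right legs can span an infinite-dimensional subspace, so for your $\mc{T}$ (all finitely generated closed submodules of $Y$) there is typically \emph{no} $S\in\mc{S}$ containing a given $T$. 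Neither of your proposed repairs closes this. Perturbing $y_i$ to a nearby element of $\mathrm{span}_{A_+}(F_i)$ takes it out of $Y=\mathrm{Ker}(q)$, so the perturbed module is no longer a submodule of $Y$ and the $(X,Y)$-system framework no longer applies; and if instead you shrink $\mc{T}$ to only those finitely generated submodules that do sit inside some $A_+\pten F$, then for an \emph{arbitrary} complete quotient $q:A_+\pten V\twoheadrightarrow X$ there is no reason that $\bigcup_{T\in\mc{T}}T$ is dense in $\mathrm{Ker}(q)$, so condition (2) fails instead.

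What rescues the argument---and what the paper actually does---is to fix the quotient to be the extended multiplication $m^+:A_+\pten X\twoheadrightarrow X$, whose kernel has the explicit dense spanning set $\{a\cdot b\ten x-a\ten b\cdot x\mid a,b\in A_+,\ x\in X\}$. Each such generator with $b\in E$, $x\in F$ ($E\subseteq A_+$, $F\subseteq X$ finite-dimensional, $1\in E$) lies in $A_+\pten\la E\cdot F\ra$, so taking $\mc{T}=\{K_{E,F}\}$ with $K_{E,F}=\la a\cdot b\ten x-a\ten b\cdot x\mid a\in A_+,\,b\in E,\,x\in F\ra$ and $\mc{S}=\{A_+\pten\la E\cdot F\ra\}$ makes conditions (2) and (3) hold simultaneously; moreover $K_{E,F}$ is the closure of the image of a morphism $A_+\pten(E\pten F)\rightarrow A_+\pten\la E\cdot F\ra$, giving topological finite presentation of the quotients. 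Your verification that $S/T$ is topologically finitely presented is fine once $T\subseteq S$ is secured, but without the explicit description of the kernel the system you build does not satisfy the hypotheses of Proposition \ref{p:lfp}.
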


\begin{proof} Let $m^+:A_+\pten X\twoheadrightarrow X$ be the extended multiplication map. It follows that 
$$K:=\mathrm{Ker}(m^+)=\la a\cdot b\ten x-a\ten b\cdot x\mid a,b\in A_+, \ x\in X\ra,$$
and $X= A_+\pten X/K$. For finite-dimensional subspaces $\C1\subseteq E\subseteq A_+$ and $F\subseteq X$, define $X_{E,F}=\la E\cdot F\ra$ and $K_{E,F}\subseteq  A_+\pten X_{E,F}$ by
$$K_{E,F}=\la a\cdot b\ten x-a\ten b\cdot x\mid a\in A_+, \ b\in E, \ x\in F\ra.$$
Since $X_{E,F}$ is finite-dimensional, we may view $ A_+\pten X_{E,F}$ as a closed submodule of $ A_+\pten X$. The corresponding sets $\mc{S}:=\{ A_+\pten X_{E,F}\}$ and $\mc{T}:=\{K_{E,F}\}$ are directed under the canonical partial order $(E,F)\leq (E',F')$ iff $E\subseteq E'$ and $F\subseteq F'$. Clearly $ A_+\pten X=\overline{\cup_{E,F} A_+\pten X_{E,F}}$ and $K=\overline{\cup_{E,F}K_{E,F}}$. By construction, each $K_{E,F}\subseteq A_+\pten X_{E,F}$ so that $( A_+\pten X,K,\mc{S},\mc{T})$ forms an $(X,Y)$-system. Hence, by Proposition \ref{p:lfp}
$$X= A_+\pten X/K\cong\varinjlim_{E,F} A_+\pten X_{E,F}/K_{E,F}.$$

Next, observe that the map 
$$m_{ A_+}\ten\id_X-\id_{ A_+}\ten m^+_X: A_+\pten A_+\pten X\rightarrow A_+\pten X$$
restricts to a completely bounded morphism $ A_+\pten E\pten F\rightarrow K_{E,F}$ with dense range. The normalized map has the same image, so we obtain a morphism $ A_+\pten E\pten F\rightarrow K_{E,F}$ in $\Amod_1$ with dense range. The resulting sequence in $\Amod_1$
\begin{equation*}
\begin{tikzcd}
A_+\pten (E\pten F)\arrow[r] & A_+\pten X_{E,F} \arrow[r, two heads] &A_+\pten X_{E,F}/K_{E,F},
\end{tikzcd}
\end{equation*}
is topologically exact. Hence, $A_+\pten X_{E,F}/K_{E,F}$ is topologically finitely presented.

\end{proof}

It is not clear whether one can remove the word ``topologically'' from the statement of Theorem \ref{t:lfp}, in general. The difficulty stems from the fact that morphisms from finitely presented modules need not be strict when $A$ is infinite dimensional.

\begin{remark} There is a notion of finite presentation in any co-complete category, defined through the preservation of direct limits through the functor $\mathrm{Hom}(E,(\cdot))$ (see, e.g., \cite[Definition 1.1]{AR}). There is an obvious operator module analogue of this notion. One can show that any topologically finitely presented module satisfies this property, and that the notions are equivalent when $A$ is finite dimensional. We omit the (somewhat lengthy) details.
\end{remark}
 
\section{The Local Lifting Property}\label{s:LLP}

An operator space $X$ has the $\lm$-local lifting property ($\lm$-LLP) if given any operator spaces $Z\subseteq Y$ and a complete contraction $\vphi:X\rightarrow Y/Z$, for every finite-dimensional subspace $E\subseteq X$ and $\ep>0$, there exists a lifting $\widetilde{\vphi}:E\rightarrow Y$ with $\norm{\widetilde{\vphi}}_{cb}<\lm+\ep$ making the following diagram commute:
\begin{equation*}
\begin{tikzcd}
& &Y\arrow[d, two heads]\\
E\arrow[rru, dotted, "\tilde{\vphi}"]\arrow[r, hook] &X \arrow[r, "\vphi"] &Y/Z.
\end{tikzcd}
\end{equation*}
This property was studied by Kye and Ruan in \cite{KR}, where they showed, among other things, that $X$ has the $\lm$-LLP if and only if $X^*$ is $\lm$-injective \cite[Theorem 5.5]{KR}. It is also known that $X$ has the $1$-LLP if and only if for every 1-exact sequence
$$0\rightarrow Y\hookrightarrow Z\twoheadrightarrow Z/Y\rightarrow 0,$$
in $\mathbf{Op}$ the sequence
$$0\rightarrow X\pten Y\hookrightarrow X\pten Z\twoheadrightarrow X\pten Z/Y\rightarrow 0$$
is 1-exact \cite[Theorem 3.4]{Dong}. Thus, the $1$-LLP is equivalent to 1-exactness of the functor $X\pten(\cdot):\mathbf{Op}\rightarrow\mathbf{Op}$, in other words, the \textit{1-flatness} of $X$ in $\Op$. Given these results and the well-known duality between flatness and injectivity in $\Amod$, it is natural to investigate an operator module analogue of the local lifting property and its relation to flatness. 

In the purely algebraic setting, it is well-known that a right module $M$ over a unital ring $R$ is flat in $\mathbf{mod}\hskip2pt R$ if and only if for any epimorphism $\vphi:N\twoheadrightarrow M$ and any finitely presented module $E$, any homomorphism $\psi:E\rightarrow M$ can be lifted to some $\tilde{\psi}:E\rightarrow N$ (see, e.g., \cite[Corollary 4.33]{Lam}). Flatness in $\mathbf{mod}\hskip2pt R$ is therefore equivalent to a ``local'' lifting type property, where finite presentation is playing the role of locality in the category $\mathbf{mod}\hskip2pt R$. This result motivated our notion of finite presentation as well as:

\begin{defn}\label{d:LLP} Let $A$ be a completely contractive Banach algebra, and $\lm\geq 1$. A module $X\in\Amod$ has the \textit{$\lm$-local lifting property ($\lm$-LLP)} if given any $Y\in\Amod$ and a complete quotient map $q\in\Hom(Y,X)$, for every topologically finitely presented $E\in\Amod$, every morphism $\vphi\in\Hom(E,X)$ and every $\ep>0$, there exists a morphism $\tilde{\vphi}:E\rightarrow Y$ such that $\norm{\tilde{\vphi}}_{cb}<\lm\norm{\vphi}_{cb}+\ep$ and $q\circ\tilde{\vphi}=\vphi$, that is, the following diagram commutes:
\begin{equation*}
\begin{tikzcd}
&Y\arrow[d, two heads, "q"]\\
E\arrow[ru, dotted, "\tilde{\vphi}"]\arrow[r, "\vphi"] &X
\end{tikzcd}
\end{equation*}
\end{defn}

An operator space $X$ has the $\lm$-LLP in $\Op$ in the sense of Definition \ref{d:LLP} if and only if it has the operator space $\lm$-LLP by \cite[Theorem 3.2]{KR}. Hence, Definition \ref{d:LLP} is a bona fide generalization of the operator space local lifting property.

Clearly, any $\lm$-projective module $X\in\Amod$ satisfies the $\lm$-LLP. A necessary condition for the $\lm$-LLP is $\lm$-flatness, as we now show. Due to the additional quotient structure from the definition of the module projective tensor product $\pten_A$ (when $A\neq\C$), the operator space argument in \cite[Proposition 5.1]{KR} does not readily generalize. Instead, we use direct limit arguments and appeal to Theorem \ref{t:lfp}.

\begin{prop}\label{p:LLP} Let $A$ be a completely contractive Banach algebra, and $\lm\geq 1$. If $X\in\Amod$ has the $\lm$-LLP then it is $\lm$-flat.
\end{prop}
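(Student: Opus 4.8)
The goal is to show that the $\lm$-LLP of $X\in\Amod$ forces $\lm$-flatness, i.e. that $X^*$ is $\lm$-injective in $\modA$, equivalently (by the characterization recalled in Section~\ref{ss:catnotions}) that for every $1$-exact sequence $0\to Y\hookrightarrow Z\twoheadrightarrow Z/Y\to 0$ in $\modA$ the induced sequence $0\to Y\pten_A X\to Z\pten_A X\twoheadrightarrow (Z/Y)\pten_A X\to 0$ is exact with the first arrow a complete $\lm$-embedding. By Lemma~\ref{l:rightexact}, topological exactness on the right is automatic, so the whole content is the complete $\lm$-embedding claim: given $[u_{i,j}]\in M_n(Y\pten_A X)$, we must bound $\norm{[u_{i,j}]}$ by $\lm$ times the norm of its image in $M_n(Z\pten_A X)$.

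The plan is to reduce to the case where $X$ is topologically finitely presented, where the $\lm$-LLP gives us an actual lifting to play with, and then pass to the limit using Theorem~\ref{t:lfp} and Proposition~\ref{p:iso}. Concretely: write $X=\varinjlim_k E_k$ with each $E_k$ topologically finitely presented (Theorem~\ref{t:lfp}); then $Y\pten_A X\cong\varinjlim_k (Y\pten_A E_k)$ and $Z\pten_A X\cong\varinjlim_k (Z\pten_A E_k)$ by Proposition~\ref{p:iso}, and by Proposition~\ref{p:limitrep} the norm in a direct limit is computed as a $\limsup$ along the net. So it suffices to show that for each fixed topologically finitely presented $E$ mapping into $X$, the map $Y\pten_A E\to Z\pten_A E$ is a complete $\lm$-embedding — except one must be careful that the $E_k$'s arising in the $\varinjlim$ decomposition of $X$ are not quite the $E$'s in the LLP hypothesis; rather the LLP is phrased for morphisms $\vphi\in\Hom(E,X)$. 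The cleaner route is: fix $[u_{i,j}]\in M_n(Y\pten_A X)$ with image of norm $<1$ in $M_n(Z\pten_A X)$; by the direct-limit description, $[u_{i,j}]$ is approximated by the image of some $[v_{i,j}]\in M_n(Y\pten_A E)$ where $E$ is topologically finitely presented and comes equipped with a morphism $\vphi\colon E\to X$; one then shows $\norm{[v_{i,j}]}_{M_n(Y\pten_A E)}<\lm$ up to $\ep$.

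The heart of the argument is this finitely presented case, and here is where the LLP enters. Take a presentation $A_+\pten T_m\xrightarrow{\psi} A_+\pten T_n\xrightarrow{q'}E$ with $q'$ a complete $\lm_0$-quotient map (Proposition~\ref{p:refp}); the point of having $A_+\pten T_n$ is that it is $1$-flat (Section~\ref{ss:catnotions}), so $Y\pten_A(A_+\pten T_n)\to Z\pten_A(A_+\pten T_n)$ is already a complete $1$-embedding, and $Y\pten_A(A_+\pten T_n)\cong T_n(Y)$, $Z\pten_A(A_+\pten T_n)\cong T_n(Z)$ via \eqref{e:A_+} and the identifications of Section~\ref{ss:op}. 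Now, to lift: the composite $E\xrightarrow{\vphi} X$ together with a complete quotient map onto $X$... the delicate choice is which complete quotient map onto $X$ to use. The natural one is $A_+\pten X\twoheadrightarrow X$, or better, to set up the situation so that the LLP lifting of $\id_E$-type data produces a completely bounded $A$-module splitting of $q'$ up to $\ep$ and $\lm$, after which $E$ becomes (up to constants) a retract of the $1$-flat module $A_+\pten T_n$, forcing the desired $\lm$-embedding on tensoring with $Y$ and $Z$. I expect the main obstacle to be exactly this bookkeeping: arranging the LLP hypothesis (lifting through a complete quotient map $q\in\Hom(Y,X)$) so that it applies to a quotient map \emph{whose source is the relatively free module $A_+\pten T_n$} rather than an abstract $Y$, and then tracking the three $\ep$-losses (from Proposition~\ref{p:refp}, from the LLP, and from the $\limsup$ in the direct limit) so that they collapse to a single $\lm$ in the limit. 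The flatness duality identity $\mc{CB}_A(Y,X^*)\cong(Y\pten_A X)^*$ and the w*-exactness reduction used in Lemma~\ref{l:rightexact} should let one dualize cleanly if the primal approach gets tangled.
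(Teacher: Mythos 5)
Your scaffolding matches the paper's: reduce via Theorem \ref{t:lfp} to the topologically finitely presented pieces $E_i$ of $X$, exploit $1$-flatness of relatively free modules, and pass to the limit with Propositions \ref{p:iso} and \ref{p:limitrep}. But the step you flag as ``the main obstacle'' is where the proof actually lives, and the route you sketch for it fails. The $\lm$-LLP of $X$ lifts morphisms $E\rightarrow X$ through complete quotient maps \emph{onto $X$}; it says nothing about splitting the presentation quotient $q'\colon A_+\pten T_n\twoheadrightarrow E$, which is a quotient onto $E$, not onto $X$ (splitting $q'$ would require projectivity, or an LLP, of $E$ itself). So ``$E$ becomes (up to constants) a retract of the $1$-flat module $A_+\pten T_n$'' is not obtainable from the hypothesis, and with it falls your claim that $Y\pten_A E\rightarrow Z\pten_A E$ is a complete $\lm$-embedding for each piece; the individual $E_i$ need not be flat at all, and the presentation from Proposition \ref{p:refp} plays no role in this proof.

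What is actually needed is a \emph{mixed} estimate rather than flatness of the pieces. Fix once and for all a complete quotient morphism $q\colon F:=A_+\pten T_I\twoheadrightarrow X$ for a (generally infinite) set $I$ --- every operator space is a complete quotient of some $T_I$, and one takes $T_I$ rather than your suggested $A_+\pten X$ precisely because $F$ is then $1$-flat. For each canonical morphism $\vphi_i\colon E_i\rightarrow X$ of the direct system, the LLP gives a lift $\widetilde{\vphi_i}\colon E_i\rightarrow F$ with $\norm{\widetilde{\vphi_i}}_{cb}<\lm+\ep$ and $q\circ\widetilde{\vphi_i}=\vphi_i$, whence for $u\in M_n(Y\pten_A E_i)$,
\begin{align*}
\norm{(\id\ten\vphi_i)_n(u)}_{M_n(Y\pten_A X)}&\leq\norm{(\id\ten\widetilde{\vphi_i})_n(u)}_{M_n(Y\pten_A F)}=\norm{(\id\ten\widetilde{\vphi_i})_n((i\ten\id)(u))}_{M_n(Z\pten_A F)}\\
&\leq(\lm+\ep)\norm{(i\ten\id)(u)}_{M_n(Z\pten_A E_i)},
\end{align*}
the middle equality being exactly the $1$-flatness of $F$. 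This bounds the image of $u$ upstairs in $Y\pten_A X$ (not in $Y\pten_A E_i$) by its norm downstairs in $Z\pten_A E_i$; assembling these maps over $i$ via the universal property of the direct limit produces a left inverse of $i\ten\id$ of cb-norm at most $\lm$ on the closure of its range, which is the desired complete $\lm$-embedding. Without this factorization through a single $1$-flat $F$ covering $X$, the $\ep$-bookkeeping you worry about is moot: there is no per-piece estimate to take a limit of.
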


\begin{proof} Let $Y\subseteq Z$ in $\modA$, and write $i$ for the inclusion map. We show that $(i\ten\id_X):Y\pten_A X\rightarrow Z\pten_A X$ is a complete $\lm$-embedding, which will entail the $\lm$-flatness of $X$.

First, pick a set $I$ and an operator space complete quotient map $q_0:T_I\twoheadrightarrow X$ (\cite[Corollary 3.2]{Blech}). Then 
$$q:=m_{X}^+\circ(\id_{A_+}\ten q_0):A_+\pten T_I\twoheadrightarrow X$$
is a complete quotient morphism. Throughout the proof we denote $A_+\pten T_I$ by $F$, to emphasize its 1-flatness (see section \ref{ss:catnotions}).

By Theorem \ref{t:lfp}, $X\cong\varinjlim_i E_i$ completely isometrically for an inductive system $(E_i,\vphi_{j,i})_{i,j\in I}$ in $\Amod_1$ where each $E_i$ is topologically finitely presented. Since $X$ has the $\lm$-LLP, for each $i\in I$ and every $\ep>0$ there exists a morphism $\widetilde{\vphi_i}:E_i\rightarrow F$ such that $\norm{\widetilde{\vphi_i}}_{cb}<\lm+\ep$ and the following diagram commutes:
\begin{equation*}
\begin{tikzcd}
&F\arrow[d, two heads, "q"]\\
 E_i\arrow[ru, "\widetilde{\vphi_i}"]\arrow[r, "\vphi_i"] &X,
\end{tikzcd}
\end{equation*}
where $\vphi_i:E_i\rightarrow X$ is the canonical morphism. Then for $u\in M_n(Y\pten_A E_i)$, 
\begin{align*}\norm{(\id\ten\vphi_i)_n(u)}_{M_n(Y\pten_A X)}&=\norm{(\id\ten q\circ\widetilde{\vphi_i})_n(u)}_{M_n(Y\pten_A X)}\\
&\leq\norm{(\id\ten \widetilde{\vphi_i})_n(u)}_{M_n(Y\pten_A F)}\\
&=\norm{(\id\ten \widetilde{\vphi_i})_n((i\ten \id)(u))}_{M_n(Z\pten_A F)}\\
&\leq(\lm+\ep)\norm{(i\ten \id)(u)}_{M_n(Z\pten_A E_i)},
\end{align*}
where the second equality follows from 1-flatness of $F$. Since $\ep>0$ was arbitrary, 
$$\norm{(\id\ten\vphi_i)_n(u)}_{M_n(Y\pten_A X)}\leq\lm\norm{(i\ten\id)(u)}_{M_n(Z\pten_A E_i)}.$$
Write $Y\pten_A^Z X$ for the closure of $(i\ten\id_X)(Y\pten_A X)$ in $Z\pten_A X$, and similarly for each $Y\pten_A^Z E_i$. By above, the map $(\id\ten\vphi_i)$ extends to a morphism 
$$\widetilde{(\id\ten\vphi_i)}:Y\pten_A^Z E_i\rightarrow Y\pten_A X$$
in $\Amod$ with $\norm{\widetilde{(\id\ten\vphi_i)}}_{cb}\leq\lm$. Moreover, for each $i,j\in I$ the following diagram commutes
\begin{equation*}
\begin{tikzcd}
Y\pten_A^Z E_i\arrow[dd, "(\id_Z\ten \vphi_{j,i})"]\arrow[rd, "\widetilde{(\id\ten \vphi_i)}"]\\
 &Y\pten_A X\\
Y\pten_A^Z E_j\arrow[ru, "\widetilde{(\id\ten \vphi_j)}"] 
\end{tikzcd}
\end{equation*}
(This is easily checked on the image $(i\ten\id)(Y\pten_A E_i)$, so the result follows by density.) By the universal property of direct limits, there is a unique morphism $\vphi:\varinjlim_i Y\pten_A^Z E_i\rightarrow Y\pten_A X$ with $\norm{\vphi}_{cb}\leq\lm$ satisfying
$$\vphi((i\ten\id)(\id\ten\vphi_i)(u_i))=(\id\ten\vphi_i)(u_i), \ \ \ u_i\in Y\pten_A E_i, \ i\in I.$$
Since each $Y\pten_A^Z E_i\hookrightarrow Z\pten_A E_i$ completely isometrically, it follows from the representation in Proposition \ref{p:limitrep} that the induced mapping between the direct limits 
$$\varinjlim_i Y\pten_A^Z E_i\hookrightarrow\varinjlim_i Z\pten_A E_i$$
is completely isometric. Furthermore, by Proposition \ref{p:iso} $\varinjlim_i Z\pten_A E_i\cong Z\pten_A X$, canonically. It follows that $\varinjlim_i Y\pten_A^Z E_i=Y\pten_A^Z X$, canonically. Under this identification, $\vphi:Y\pten_A^Z X\rightarrow Y\pten_A X$ is a left inverse to $(i\ten\id)$ with $\norm{\vphi}_{cb}\leq\lm$. Hence, $(i\ten\id)$ is a complete $\lm$-embedding.
\end{proof}

We now establish a partial converse to Proposition \ref{p:LLP} under the assumption that $A_+\pten T_I$ is locally reflexive for any set $I$. By \cite{EJR}, this condition is satisfied whenever $A$ is the predual of a von Neumann algebra, e.g., $A=\LOQ$ for any locally compact quantum group $\G$. In what follows we use the notation $\simeq_b$ to mean isomorphism in the category $\mathbf{Ban}$ of Banach spaces and bounded linear maps.

For $X,Y\in\Amod$ there is a canonical complete contraction
$$\Phi:X^*\pten_A Y\ni f\ten_A y\mapsto (\vphi\mapsto\la f,\vphi(y)\ra) \in\Hom(Y,X)^*.$$
When $A=\C$ and $Y=E$ is finite dimensional, then $\Hom(E,X)=E^*\iten X$ (injective operator space tensor product) and the map $\Phi$ is the canonical isomorphism
$$X^*\pten E=X^*\pten E^{**}\simeq_b(E^*\iten X)^*,$$
which appears in the study of local reflexivity (see, e.g., \cite[\S14.3]{ER}). Such an isomorphism holds for general $A$ whenever $E$ is topologically finitely presented, as we now show. This may be seen as an operator module analogue of (a special case of) the purely algebraic result \cite[Lemma 9.71]{R}.

\begin{prop}\label{p:fp} Let $A$ be a completely contractive Banach algebra, and let $E\in\Amod$ be topologically finitely presented. Then $X^*\pten_A E\simeq_b\Hom(E,X)^*$ for all $X$ in $\Amod$.
\end{prop}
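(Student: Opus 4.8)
The plan is to use the defining topologically exact sequence for $E$ and reduce to the relatively free case $E = A_+\pten F$, where the isomorphism can be checked directly, then patch the pieces together using the (contravariant) left-exactness of $\Hom(-,X)$ and the right-exactness of $X^*\pten_A(-)$ from Lemma \ref{l:rightexact}. Concretely, by Proposition \ref{p:refp} we may fix a topologically exact sequence $A_+\pten T_m \xrightarrow{\psi} A_+\pten T_n \xrightarrow{\vphi} E$ with $\vphi$ a complete $\lm$-quotient map. Applying $\Hom(-,X)$ gives, by Lemma \ref{l:rightexact}, an exact sequence in $\Op$
\begin{equation*}
\begin{tikzcd}
0 \arrow[r] & \Hom(E,X) \arrow[r] & \Hom(A_+\pten T_n,X) \arrow[r] & \Hom(A_+\pten T_m,X),
\end{tikzcd}
\end{equation*}
and dualizing (exactness is preserved, as taking Banach-space duals is exact) yields
\begin{equation*}
\begin{tikzcd}
\Hom(A_+\pten T_m,X)^* \arrow[r] & \Hom(A_+\pten T_n,X)^* \arrow[r] & \Hom(E,X)^* \arrow[r] & 0.
\end{tikzcd}
\end{equation*}
On the other side, tensoring the original sequence by $X^*\pten_A(-)$ and invoking Lemma \ref{l:rightexact} again gives the topologically exact sequence $X^*\pten_A(A_+\pten T_m) \to X^*\pten_A(A_+\pten T_n) \to X^*\pten_A E \to 0$. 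The map $\Phi$ from the excerpt is natural, so it intertwines these two right-exact sequences; by the five lemma (in the Banach-space category, using that $\Phi$ is iso in the free case), $\Phi_E$ will be a bounded isomorphism once we know $\Phi$ is a bounded isomorphism for $A_+\pten T_n$ and $A_+\pten T_m$.

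So the crux is the case $E = A_+\pten T_k$ for finite $k$. Here I would compute both sides explicitly: by relative freeness \eqref{e:relfree} and the identifications in Section \ref{ss:op}, $\Hom(A_+\pten T_k,X) \cong \mc{CB}(T_k,X) = M_k(X)$, whose operator space dual is $T_k(X^*)$; and again by \eqref{e:A_+} and the fact that $T_k(X^*) = X^*\pten T_k$, we get $X^*\pten_A(A_+\pten T_k) = X^*\pten(A_+\pten_A T_k)$... more carefully, $X^* \pten_A (A_+ \pten T_k) \cong (X^*\pten_A A_+)\pten T_k \cong X^*\pten T_k = T_k(X^*)$ using \eqref{e:A_+} for right modules. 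It then remains to check that under these identifications $\Phi$ becomes the identity (or the canonical iso) $T_k(X^*)\to M_k(X)^* = T_k(X^*)$, which is a routine unwinding of the pairing $\la f\ten_A y, \vphi\ra = \la f,\vphi(y)\ra$ against the trace duality $\la[\rho_{ij}],[x_{ij}]\ra = \sum \rho_{ij}(x_{ij})$.

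The main obstacle I anticipate is purely bookkeeping: ensuring that the naturality square relating $\Phi$ at $A_+\pten T_m$, $A_+\pten T_n$, and $E$ genuinely commutes at the level of bounded maps, and that the five-lemma argument only needs the outer maps to be \emph{bounded} isomorphisms (we make no claim of complete isometry here, only $\simeq_b$, which is why the statement is phrased with $\simeq_b$ rather than $\cong$). One subtlety worth flagging: the sequence obtained from $\Hom(-,X)$ is genuinely exact (not merely topologically exact) by Lemma \ref{l:rightexact}, so its dual is exact; but the bottom sequence $X^*\pten_A(\cdots)\to\cdots\to X^*\pten_A E\to 0$ is only \emph{topologically} exact, so when I apply the five lemma I should pass to the induced map on $X^*\pten_A(A_+\pten T_n)/\overline{\mathrm{im}}$ and use that $\Phi$ lands in a space where images have already been closed on the dual side — equivalently, one argues surjectivity and injectivity of $\Phi_E$ separately from a diagram chase, rather than quoting the five lemma verbatim. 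I expect no genuine difficulty beyond this, since all the needed exactness statements are already assembled in Lemma \ref{l:rightexact} and Proposition \ref{p:refp}.
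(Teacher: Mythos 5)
Your proposal is correct and follows essentially the same route as the paper: take the defining topologically exact sequence of relatively free modules, apply Lemma \ref{l:rightexact} to both the Hom and tensor sides, dualize the (exact) Hom sequence, and run the diagram chase you describe (the paper likewise argues bijectivity directly and then invokes the inverse mapping theorem). The only cosmetic difference is that you route through Proposition \ref{p:refp} to replace $F_1,F_2$ by $T_m,T_n$; the paper works with the arbitrary finite-dimensional $F_i$ directly, since the base case $X^*\pten_A(A_+\pten F)\cong (X^*\pten_A A_+)\pten F\cong X^*\pten F\simeq_b\mc{CB}(F,X)^*$ needs no matricial freeness.
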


\begin{proof} First suppose that $E= A_+\pten F$ for some finite-dimensional operator space $F$. By associativity of module tensor products \cite[Theorem 3.4.10]{BLM} and equation (\ref{e:A_+})
$$X^*\pten_A E=X^*\pten_A ( A_+ \pten F)=(X^*\pten_A  A_+ )\pten F\cong X^*\pten F\simeq_b\mc{CB}(F,X)^*\cong\Hom(E,X)^*.$$
Now, suppose that we have a topologically exact sequence in $\Amod_1$
\begin{equation*}
\begin{tikzcd}
A_+ \pten F_1 \arrow[r] &A_+ \pten F_2 \arrow[r, two heads, "\vphi"] &E,
\end{tikzcd}
\end{equation*}
with $\vphi$ a complete quotient morphism. Then by Lemma \ref{l:rightexact}
\begin{equation*}
\begin{tikzcd}
X^*\pten_A( A_+ \pten F_1)\arrow[r] & X^*\pten_A( A_+ \pten F_2)\arrow[r, two heads] & X^*\pten_A E
\end{tikzcd}
\end{equation*}
is topologically exact in $\Op$, and 
\begin{equation*}
\begin{tikzcd}
\Hom(E,X)\arrow[r, hook] &\Hom( A_+ \pten F_2,X)\arrow[r] &\Hom( A_+ \pten F_1,X)
\end{tikzcd}
\end{equation*}
is exact in $\Op$. Since $\C$ is injective in $\Op$, the sequence
\begin{equation*}
\begin{tikzcd}
\Hom( A_+ \pten F_1,X)^* \arrow[r] &\Hom( A_+ \pten F_2,X)^* \arrow[r, two heads] &\Hom(E,X)^*
\end{tikzcd}
\end{equation*}
is also exact. Consider the commutative diagram
\begin{equation}\label{e:diag1}
\begin{tikzcd}
& X^*\pten_A( A_+ \pten F_1)\arrow[r]\arrow[d, equal] & X^*\pten_A( A_+ \pten F_2)\arrow[r, two heads]\arrow[d, equal] & X^*\pten_A E\arrow[d, "\Phi"]\\
& \Hom( A_+ \pten F_1,X)^*\arrow[r] & \Hom( A_+ \pten F_2,X)^*\arrow[r,two heads] &\Hom(E,X)^*.
\end{tikzcd}
\end{equation}
Since the top row is topologically exact (specifically, its first arrow maps into the kernel of its second), the bottom row is exact, the first two vertical arrows are (bounded) isomorphisms, and the second arrows in each row are surjective, the usual diagram chase implies that $\Phi$ is bijective, and thus a bounded isomorphism by the inverse mapping theorem.
\end{proof}

Proposition \ref{p:fp} suggests a notion of $\lm$-local reflexivity for modules $X\in\Amod$: every completely contractive morphism $\vphi:E\rightarrow X^{**}$ from a topologically finitely presented module $E$ can be approximated in the point weak* topology by a net $\vphi_i:E\rightarrow X$ satisfying $\norm{\vphi_i}_{cb}\leq \lm$. Equivalently, the isomorphism
$$\Phi:X^*\pten_A E\simeq_b\Hom(E,X)^*$$
satisfies $\norm{\Phi^{-1}}\leq\lm$ for every topologically finitely presented $E\in\Amod$. However, this notion coincides with $\lm$-local reflexivity in $\Op$.

\begin{prop}\label{p:locrefl} Let $A$ be a completely contractive Banach algebra, and $\lm\geq 1$. Then $X\in\Amod$ is $\lm$-locally reflexive in $\Amod$ (in the sense above) if and only if it is $\lm$-locally reflexive in $\Op$.
\end{prop}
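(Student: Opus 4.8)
The plan is to reduce the statement to its instances on free modules $A_+\pten F$ ($F$ finite dimensional), where the canonical contraction $\Phi$ is, up to the standard identifications, nothing but the operator space local reflexivity map. For the forward implication, recall first that $A_+\pten F$ is topologically finitely presented for every finite dimensional operator space $F$ (Example~\ref{ex:fp1}). Moreover, (\ref{e:A_+}) and its right-module analogue, associativity of $\pten_A$, and the relative freeness isomorphism (\ref{e:relfree}) yield completely isometric identifications $X^*\pten_A(A_+\pten F)\cong X^*\pten F$ and $\Hom(A_+\pten F,X)\cong\mc{CB}(F,X)$; under these, $\Phi\colon X^*\pten_A(A_+\pten F)\to\Hom(A_+\pten F,X)^*$ is carried onto the canonical bounded isomorphism $X^*\pten F\simeq_b\mc{CB}(F,X)^*$ from operator space local reflexivity theory (cf.\ the discussion preceding Proposition~\ref{p:fp} and \cite[\S14.3]{ER}). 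Hence $\norm{\Phi^{-1}}\le\lm$ for all finite dimensional $F$ is precisely the assertion that $X$ is $\lm$-locally reflexive in $\Op$, and since $\lm$-local reflexivity of $X$ in $\Amod$ demands $\norm{\Phi^{-1}}\le\lm$ for \emph{every} topologically finitely presented module, it implies $\lm$-local reflexivity in $\Op$.

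For the converse, suppose $X$ is $\lm$-locally reflexive in $\Op$ and let $E\in\Amod$ be topologically finitely presented, via a topologically exact sequence $A_+\pten F_1\to A_+\pten F_2\xrightarrow{\,q\,}E$ with $q$ a complete quotient morphism and $F_1,F_2$ finite dimensional. I would invoke the commutative diagram (\ref{e:diag1}) from the proof of Proposition~\ref{p:fp}, together with the following ingredients: (i) $\Phi\colon X^*\pten_A E\to\Hom(E,X)^*$ is a bounded bijection (Proposition~\ref{p:fp}); (ii) $\Phi\colon X^*\pten_A(A_+\pten F_2)\to\Hom(A_+\pten F_2,X)^*$ is a bounded bijection with $\norm{\Phi^{-1}}\le\lm$, by the first paragraph; (iii) the bottom-right arrow $\Hom(A_+\pten F_2,X)^*\twoheadrightarrow\Hom(E,X)^*$ of (\ref{e:diag1}) is a metric surjection, being the adjoint of the completely isometric embedding $\Hom(E,X)\hookrightarrow\Hom(A_+\pten F_2,X)$, $\psi\mapsto\psi\circ q$ (completely isometric precisely because $q$ is a complete quotient morphism); and (iv) the top-right arrow $\id_{X^*}\pten_A q\colon X^*\pten_A(A_+\pten F_2)\to X^*\pten_A E$ is a complete contraction (indeed a complete quotient map, by Lemma~\ref{l:rightexact}).

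Then I would run a norm-tracking diagram chase on the right-hand square of (\ref{e:diag1}): given $\xi\in\Hom(E,X)^*$ with $\norm{\xi}<1$, lift $\xi$ through (iii) to some $\tilde\xi\in\Hom(A_+\pten F_2,X)^*$ with $\norm{\tilde\xi}<1$, and let $\eta\in X^*\pten_A(A_+\pten F_2)$ be its preimage under the middle vertical map, so that $\norm{\eta}<\lm$ by (ii). Commutativity of the square together with the injectivity in (i) forces $\Phi^{-1}(\xi)=(\id_{X^*}\pten_A q)(\eta)$ in $X^*\pten_A E$, whence $\norm{\Phi^{-1}(\xi)}\le\norm{\eta}<\lm$ by (iv). Letting $\norm{\xi}\nearrow1$ gives $\norm{\Phi^{-1}}\le\lm$ on $X^*\pten_A E$; since $E$ was an arbitrary topologically finitely presented module, $X$ is $\lm$-locally reflexive in $\Amod$.

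The argument is largely bookkeeping once the free case is matched with the classical local reflexivity map, so I do not expect a serious obstacle. The two points that genuinely require care are: (a) that the identifications $X^*\pten_A(A_+\pten F)\cong X^*\pten F$ and $\Hom(A_+\pten F,X)\cong\mc{CB}(F,X)$ are \emph{completely isometric}, so that $\Phi$ on free modules really coincides with the operator space map and no spurious constant is introduced; and (b) that the bottom-right arrow of (\ref{e:diag1}) is a genuine metric surjection, which is exactly where one uses that the presenting map $q$ in Definition~\ref{d:fp} is a complete quotient morphism (so $\lm=1$ there), not merely a surjection.
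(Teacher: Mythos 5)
Your proposal is correct and follows essentially the same route as the paper: both directions reduce to the free modules $A_+\pten F$, where $\Phi$ is identified (via (\ref{e:A_+}), associativity of $\pten_A$, and (\ref{e:relfree})) with the classical operator space local reflexivity map, and the converse is obtained by a norm-controlled chase on diagram (\ref{e:diag1}). Your explicit lifting argument in the right-hand square simply fills in the details that the paper compresses into ``it follows that $X^*\pten_A E\simeq_b\Hom(E,X)^*$ with inverse bounded by $\lm$,'' and your cautionary points (a) and (b) are exactly the places where the paper's identifications and the quotient property of $q$ are used.
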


\begin{proof} If $X$ is $\lm$-locally reflexive in $\Amod$, then for every finite-dimensional $E\in\Op$, $A_+\pten E$ is finitely presented, so we have
$$X^*\pten E\cong X^*\pten_A ( A_+ \pten E)\simeq_b\Hom( A_+ \pten E,X)^*\cong\mc{CB}(E,X)^*=(E^*\iten X)^*,$$
with the norm of the inverse bounded by $\lm$. Thus, $X$ is $\lm$-locally reflexive in $\Op$.

Conversely, suppose $X$ is $\lm$-locally reflexive in $\Op$. Then for any finite-dimensional operator space $F$,  
$$X^*\pten_A ( A_+ \pten F)\cong X^*\pten F\simeq_b\mc{CB}(F,X)^*\cong\Hom( A_+ \pten F,X)^*,$$
with the norm of the inverse bounded by $\lm$. If $E\in\Amod$ is topologically finitely presented, then there is a topologically exact sequence
\begin{equation*}
\begin{tikzcd}
A_+\pten F_1 \arrow[r] & A_+\pten F_2 \arrow[r, two heads, "\vphi"] &E
\end{tikzcd}
\end{equation*}
in $\Amod_1$ with $\vphi$ a complete quotient map. Building the corresponding diagram as in (\ref{e:diag1}), the first row is topologically exact, the second row is exact, the first two vertical arrows have inverses bounded by $\lm$, and the second horizontal arrows in each row are complete quotient maps. It follows that $$X^*\pten_A E\simeq_b\Hom(E,X)^*$$ with inverse bounded by $\lm$.
\end{proof}

The previous two results allow for an operator module extension of (the first part of) \cite[Proposition 5.4]{KR}. Contrary to Proposition \ref{p:LLP}, the operator space argument from \cite{KR} extends more or less verbatim.

\begin{prop}\label{p:KR} Let $Y,Z\in\Amod$, $Z\subseteq Y$ and $Y$ be locally reflexive in $\Op$. If $Z^{\perp}$ is $\lm$-completely complemented as a submodule of $Y^*$, then
\begin{enumerate}
\item $Y/Z$ is $\lm$-locally reflexive, and
\item the canonical map 
$$T:\Hom(E,Y)/\Hom(E,Z)\rightarrow\Hom(E,Y/Z)$$
satisfies $\norm{T^{-1}}\leq\lm$ for every topologically finitely presented $E\in\Amod$.
\end{enumerate}
\end{prop}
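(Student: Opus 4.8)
The plan is to adapt the operator space argument from \cite[Proposition 5.4]{KR}, using the module version of local reflexivity established in Propositions \ref{p:fp} and \ref{p:locrefl}. First I would observe that since $Z^{\perp}$ is $\lm$-completely complemented in $Y^*$, there is a module projection $P:Y^*\twoheadrightarrow Z^{\perp}$ with $\norm{P}_{cb}\leq\lm$. Identifying $(Y/Z)^*\cong Z^{\perp}$ completely isometrically as a dual right $A$-module, the map $P$ provides a completely bounded right inverse (of $cb$-norm $\leq\lm$) to the canonical complete isometry $Z^{\perp}\hookrightarrow Y^*$, which dualizes to a completely bounded right inverse $\Pi:Y^{**}\to (Y/Z)^{**}$ of the canonical embedding $(Y/Z)^{**}\hookrightarrow Y^{**}$ followed by the quotient $Y^{**}\twoheadrightarrow Y^{**}/Z^{**}=(Y/Z)^{**}$; more precisely $\Pi=P^*$ restricted appropriately, with $\norm{\Pi}_{cb}\leq\lm$.

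For part (1): given a topologically finitely presented $E\in\Amod$ and a complete contraction $\vphi:E\to(Y/Z)^{**}$, I would compose with the canonical inclusion to get $\iota\circ\vphi:E\to Y^{**}$. Since $Y$ is locally reflexive in $\Op$, by Proposition \ref{p:locrefl} it is locally reflexive in $\Amod$, so there is a net $\psi_i:E\to Y$ with $\norm{\psi_i}_{cb}\leq 1$ converging to $\iota\circ\vphi$ in the point-weak* topology. Then $q\circ\psi_i:E\to Y/Z$ (where $q:Y\twoheadrightarrow Y/Z$ is the quotient) has $\norm{q\circ\psi_i}_{cb}\leq 1$, but it need not converge to $\vphi$; rather, composing with the embedding $(Y/Z)^{**}\hookrightarrow Y^{**}$ one checks the net $(q\circ\psi_i)^{**}$-images land near $\iota\circ\vphi$ only after applying $\Pi$. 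So the correct net is $\Pi\circ\iota\circ\psi_i^{**}\colon E\to(Y/Z)^{**}$, which has $cb$-norm $\leq\lm$ and converges to $\Pi\circ\iota\circ\vphi=\vphi$ point-weak*. Hence $X=Y/Z$ is $\lm$-locally reflexive. I should be slightly careful that point-weak* convergence of $\psi_i\to\iota\circ\vphi$ is preserved under post-composition with the weak*-continuous complete contraction $\Pi\circ(\text{inclusion into }Y^{**})$, which it is since $\Pi=P^*$ is weak*-weak* continuous.

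For part (2): I would use Proposition \ref{p:fp} to write, for topologically finitely presented $E$, the bounded isomorphisms $\Hom(E,Y)^*\cong Y^*\pten_A E$ and $\Hom(E,Y/Z)^*\cong (Y/Z)^*\pten_A E = Z^{\perp}\pten_A E$. The quotient map $q:Y\twoheadrightarrow Y/Z$ induces $q_*:\Hom(E,Y)\to\Hom(E,Y/Z)$ with kernel $\Hom(E,Z)$, hence the canonical map $T:\Hom(E,Y)/\Hom(E,Z)\to\Hom(E,Y/Z)$; dualizing, $T^*$ is identified with $(\text{incl.})\pten\id_E:Z^{\perp}\pten_A E\to Y^*\pten_A E$ composed with the natural identifications, and the complemented projection $P$ furnishes a left inverse $P\pten\id_E$ of norm $\leq\lm$ to this, so $\norm{(T^*)^{-1}}\leq\lm$, whence $\norm{T^{-1}}\leq\lm$. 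The main obstacle is bookkeeping the various dualities so that the projection $P:Y^*\to Z^{\perp}$ is correctly matched with both the lifting in (1) and the map $T$ in (2); the underlying module-tensor / Hom duality $(X\pten_A E)^*\cong\mc{CB}_A(E,X^*)$ and its compatibility with quotients (Lemma \ref{l:rightexact}) does the real work, and one only needs to verify that the relevant squares commute, which is routine by checking on elementary tensors.
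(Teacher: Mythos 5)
Your part (2) is essentially the paper's argument: dualize $T$, identify $T^*$ with $(i\ten\id):Z^{\perp}\pten_A E\to Y^*\pten_A E$ through the isomorphisms of Proposition \ref{p:fp} (the one on the $Y$ side being isometric because $Y$ is locally reflexive, via Proposition \ref{p:locrefl}), and use the projection $P$ to bound $\norm{(i\ten\id)^{-1}}$ by $\lm$. The paper packages this, together with part (1), into a single commutative diagram, and uses that same diagram to see that $T^*$ maps onto $(\Hom(E,Y)/\Hom(E,Z))^*$ --- a point you should make explicit, since bijectivity of $T$ is part of the conclusion and does not follow from the left-inverse estimate alone.

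For part (1) you take a genuinely different (and legitimate) route, via the net formulation of $\lm$-local reflexivity rather than the paper's criterion $\norm{\Phi_1^{-1}}\le\lm$ on $(Y/Z)^*\pten_A E\to\Hom(E,Y/Z)^*$; the two are equivalent by the remark preceding Proposition \ref{p:locrefl}. However, the execution has two concrete errors. First, there is no canonical inclusion $(Y/Z)^{**}\hookrightarrow Y^{**}$: the canonical map $q^{**}:Y^{**}\to(Y/Z)^{**}$ is a quotient, and the embedding you need is precisely $P^*:(Y/Z)^{**}\to Y^{**}$, the adjoint of the module projection $P:Y^*\to Z^{\perp}$, which is a weak*-continuous section of $q^{**}$ with $\norm{P^*}_{cb}\le\lm$. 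Consequently $P^*\circ\vphi$ has cb-norm at most $\lm$, and the approximating net $\psi_i:E\to Y$ supplied by local reflexivity of $Y$ satisfies $\norm{\psi_i}_{cb}\le\lm$, not $\le1$. Second, your final net $\Pi\circ\iota\circ\psi_i^{**}$ takes values in $(Y/Z)^{**}$, so it cannot witness local reflexivity of $Y/Z$ (one needs maps into $Y/Z$ itself); moreover the worry it is meant to repair is unfounded: since $q^{**}$ is weak*-continuous and $q^{**}\circ P^*=\id_{(Y/Z)^{**}}$, the simple net $q\circ\psi_i:E\to Y/Z$ already converges point-weak* to $q^{**}\circ P^*\circ\vphi=\vphi$ and has cb-norm at most $\lm$, which is all that is required. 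With these corrections your part (1) goes through.
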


\begin{proof} Let $E\in\Amod$ be topologically finitely presented. Let $q:Y\twoheadrightarrow Y/Z$ and 
$$Q:\Hom(E,Y)\rightarrow \Hom(E,Y)/\Hom(E,Z)$$
be the canonical quotient maps, so that $\Hom(E(q))=T\circ Q$, where $\Hom(E(\cdot))$ is the standard functor. Write $i$ for the inclusion $Z^{\perp}\subseteq Y^*$, and let $Z^{\perp}\pten_{A}^{Y^*}E$ denote the closed subspace generated by $(i\ten\id)(Z^\perp\pten_A E)$. We will show that the following diagram
\begin{equation}\label{e:diag2}
\begin{tikzcd}
\Hom(E,Y/Z)^*  \arrow[rr, "\mathrm{Hom}(E(q))^*"] \arrow[rd, "T^*"] &  &\Hom(E,Y)^*\\
 & \bigg(\Hom(E,Y)/\Hom(E,Z)\bigg)^* \arrow[ru, "Q^*"] &\\
Z^{\perp}\pten_A E \arrow[uu, "\Phi_1"] \arrow[r, "(i\ten\id)"] &Z^{\perp}\pten_{A}^{Y^*}E \arrow[r, hook]\arrow[u, "\Phi_2"] &Y^*\pten_A E \arrow[uu, "\Phi"]
\end{tikzcd}
\end{equation}
commutes, where $\Phi_2$ is the restriction of $\Phi$ to $Z^{\perp}\pten_{A}^{Y^*}E$. For any $f\in Z^{\perp}$, $x\in E$, and $\vphi\in\Hom(E,Y)$, we have
\begin{align*}\la\Hom(E(q))^*\circ\Phi_1(f\ten_A x),\vphi\ra&=\la\Phi_1(f\ten_A x),q\circ\vphi\ra=\la f,q(\vphi(x))\ra\\
&=\la q^*(f),\vphi(x)\ra=\la\Phi(q^*(f)\ten_A x),\vphi\ra\\
&=\la\Phi((i\ten\id)(f\ten_A x)),\vphi\ra.
\end{align*}
Hence, the outer rectangle in (\ref{e:diag2}) commutes, that is  
$\Hom(E(q))^*\circ\Phi_1=\Phi\circ (i\ten\id)$. 
%or $(i\ten\id)^{-1}\circ\Phi^{-1}\circ\Hom(E(q))^*\circ\Phi_1=\id_{Z^{\perp}\ten_A E}$.%
Since $Q^*$ is just the inclusion $\Hom(E,Z)^{\perp}\subseteq\Hom(E,Y)^*$, and $\Hom(E(q))^*=Q^*\circ T^*$, it follows that $T^*\circ\Phi_1=\Phi_2\circ (i\ten\id)$. Hence, the entire diagram (\ref{e:diag2}) commutes.

Now, since $Z^{\perp}$ is a $\lm$-completely complemented submodule of $Y^*$, $\norm{(i\ten\id)^{-1}}\leq\lm$, and since $Y$ is locally reflexive, Proposition \ref{p:locrefl} implies that $\norm{\Phi^{-1}}\leq 1$. Thus, $\Phi$ is an isometric isomorphism, implying its restriction $\Phi_2$ is an isometric isomorphism onto its range, which is all of $(\Hom(E,Y)/\Hom(E,Z))^*$ by commutativity of (\ref{e:diag2}) and the fact that $Z^{\perp}$ is completely complemented in $Y^*$. Hence, $\norm{\Phi_2^{-1}}\leq 1$, and 
$$\norm{\Phi_1^{-1}}=\norm{(i\ten\id)^{-1}\circ\Phi_2^{-1}\circ T^*}\leq\norm{(i\ten\id)^{-1}}\leq\lm.$$
Since $E\in\Amod$ was arbitrary, it follows that $Y/Z$ is $\lm$-locally reflexive (in either category, by Proposition \ref{p:locrefl}). Finally,
$$\norm{T^{-1}}=\norm{(T^*)^{-1}}=\norm{\Phi_1\circ (i\ten\id)^{-1}\circ\Phi_2^{-1}}\leq\norm{(i\ten\id)^{-1}}\leq\lm.$$
\end{proof}

%\begin{remark} The corresponding results of Propositions \ref{p:fp}, \ref{p:locrefl} and \ref{p:KR} are valid in $\modA$ with identical proof.
%\end{remark}

\begin{thm}\label{t:LLP} Let $A$ be a completely contractive Banach algebra such that $A_+\pten T_I$ is locally reflexive for every set $I$, and let $\lm\geq 1$. Then $X\in\Amod$ is $\lm$-flat if and only if $X$ has the $\lm$-LLP.
\end{thm}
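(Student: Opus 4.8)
The backward implication, that the $\lm$-LLP implies $\lm$-flatness, is exactly Proposition \ref{p:LLP}. So the plan is to prove the converse: assuming $X\in\Amod$ is $\lm$-flat, I want to produce, for any complete quotient morphism $q:Y\twoheadrightarrow X$, any topologically finitely presented $E\in\Amod$, any $\vphi\in\Hom(E,X)$ and any $\ep>0$, a lifting $\tilde{\vphi}:E\to Y$ with $q\circ\tilde{\vphi}=\vphi$ and $\norm{\tilde{\vphi}}_{cb}<\lm\norm{\vphi}_{cb}+\ep$. The first move is to replace $Y$ by a matricially free module. Using \cite[Corollary 3.2]{Blech} I would pick an operator space complete quotient $T_J\twoheadrightarrow Y$, amplify by $\id_{A_+}$, and compose with $m_Y^+$ (exactly as in the proof of Proposition \ref{p:LLP}), obtaining a complete quotient morphism $p:G:=A_+\pten T_J\twoheadrightarrow Y$ with $\norm{p}_{cb}\leq 1$. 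Then $q\circ p:G\twoheadrightarrow X$ is again a complete quotient morphism; writing $Z':=\mathrm{Ker}(q\circ p)$, the induced map is a completely isometric module isomorphism $G/Z'\cong X$, and dually $X^*\cong (Z')^{\perp}\subseteq G^*$ as dual operator $A$-modules.

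Since $X$ is $\lm$-flat, $X^*\cong(Z')^{\perp}$ is $\lm$-injective in $\modA$. Applying injectivity to the completely isometric module inclusion $(Z')^{\perp}\hookrightarrow G^*$ together with the identity morphism of $(Z')^{\perp}$ produces a module projection $G^*\to(Z')^{\perp}$ of completely bounded norm at most $\lm$; that is, $(Z')^{\perp}$ is $\lm$-completely complemented in $G^*$. Now the standing hypothesis enters: $G=A_+\pten T_J$ is locally reflexive in $\Op$, so Proposition \ref{p:KR}(2) applies with $(G,Z')$ in the role of $(Y,Z)$, and yields that the canonical map
\[T:\Hom(E,G)/\Hom(E,Z')\longrightarrow\Hom(E,G/Z')\cong\Hom(E,X)\]
satisfies $\norm{T^{-1}}\leq\lm$ for our topologically finitely presented $E$.

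It remains to unwind this. From $\norm{T^{-1}}\leq\lm$ and the definition of the quotient norm on $\Hom(E,G)/\Hom(E,Z')$, there is a representative $\tilde{\vphi}_1\in\Hom(E,G)$ of $T^{-1}(\vphi)$ with $\norm{\tilde{\vphi}_1}_{cb}<\lm\norm{\vphi}_{cb}+\ep$; and since $T\circ Q=\Hom(E(q\circ p))$ is post-composition by the complete quotient $q\circ p:G\to X$ (identifying $G/Z'$ with $X$), the identity $T(Q(\tilde{\vphi}_1))=\vphi$ forces $(q\circ p)\circ\tilde{\vphi}_1=\vphi$. Setting $\tilde{\vphi}:=p\circ\tilde{\vphi}_1$ then finishes the proof: $q\circ\tilde{\vphi}=(q\circ p)\circ\tilde{\vphi}_1=\vphi$ and $\norm{\tilde{\vphi}}_{cb}\leq\norm{p}_{cb}\norm{\tilde{\vphi}_1}_{cb}<\lm\norm{\vphi}_{cb}+\ep$.

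I expect the real obstacle to be the reduction in the first paragraph rather than any single estimate. One cannot simply lift $\vphi$ through a fixed ``universal'' complete quotient of $X$ and push the lifting forward, because $A_+\pten T_J$ need not be projective when $J$ is infinite (in contrast to the finite-dimensional case, where $T_n$ is projective and such a push-forward works). Pulling $Y$ itself back to a matricially free module and invoking Proposition \ref{p:KR} there is precisely what makes local reflexivity of \emph{all} the $A_+\pten T_I$ the natural (and, it appears, necessary) hypothesis. The remaining technical points — that a composition of complete quotient morphisms is again one, that the identification $X^*\cong(Z')^{\perp}$ respects the module structures, and that $T\circ Q$ is post-composition by $q\circ p$ — are routine, the last being modelled on the diagram chase already carried out for Proposition \ref{p:KR}.
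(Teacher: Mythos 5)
Your proof is correct and follows essentially the same route as the paper: realize $X$ as a quotient of a matricially free module $A_+\pten T_I$, use $\lm$-flatness to get $\lm$-complete complementation of $X^*$ in $(A_+\pten T_I)^*$, and invoke Proposition \ref{p:KR} together with local reflexivity of $A_+\pten T_I$. The only difference is that you spell out explicitly the reduction from an arbitrary quotient $q:Y\twoheadrightarrow X$ to a matricially free one (by factoring through $p:A_+\pten T_J\twoheadrightarrow Y$ and pushing the lifting forward), a step the paper handles in one terse sentence.
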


\begin{proof} The reverse direction follows immediately from Proposition \ref{p:LLP}. 

Suppose that $X$ is $\lm$-flat in $\Amod$, equivalently, $X^*$ is $\lm$-injective in $\modA$. As in the proof of Proposition \ref{p:LLP}, $X=  A_+ \pten T_I/Z$ for some set $I$ and closed submodule $Z\subseteq A_+ \pten T_I$. By assumption, $A_+ \pten T_I$ is locally reflexive and $Z^{\perp}=X^*$ is a $\lm$-completely complemented submodule of $( A_+ \pten T_I)^*$. Proposition \ref{p:KR} implies that the canonical map 
$$T:\Hom(E, A_+ \pten T_I)/\Hom(E,Z)\rightarrow\Hom(E,X)$$
satisfies $\norm{T^{-1}}\leq \lm$ for any topologically finitely presented $E\in\Amod$. This implies that $X$ has the $\lm$-LLP with respect to any quotient of the form $A_+\pten T_I\twoheadrightarrow X$. Since any module in $\Amod$ is a complete quotient of some $A_+\pten T_I$, it follows that $X$ has the $\lm$-LLP.
\end{proof}

\begin{remark} Local type characterizations of (relatively) flat Banach modules were studied by Aristov in \cite{A}. Among other things, he showed that flatness of a Banach $A$-module $X$ is equivalent to a different type of local lifting property, namely with respect to finite-dimensional subspaces of $X$ of the form $E=\mathrm{span}\{a_i\cdot x_j\mid i=1,...,m, \ j=1,...,n\}$ with $a_1,...,a_m\in A_+$ and $x_1,...,x_n\in X$ (see \cite[Theorem 2.1]{A}). However, the morphisms involved in this property are only assumed to be approximate $A$-module maps, and therefore live outside $\Amod$. His work inspired us to seek a local characterization of flatness \textit{within} $\Amod$. Theorem \ref{t:LLP} achieves this and shows that our concept of topological finite presentation provides a suitable notion of locality in this context.
\end{remark}

\begin{cor}\label{c:LLPproj} Let $A$ be a completely contractive Banach algebra such that $A_+\pten T_I$ is locally reflexive for every set $I$, and $\lm\geq 1$. If $X\in\Amod$ is $\lm$-flat and topologically finitely presented then it is $\lm$-projective.
\end{cor}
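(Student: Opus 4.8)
The plan is to combine \thmref{t:LLP} with a pullback (fibre product) argument. Since $A_+\pten T_I$ is locally reflexive for every set $I$, \thmref{t:LLP} tells us that $\lm$-flatness of $X$ is equivalent to $X$ having the $\lm$-LLP; thus we may assume $X$ has the $\lm$-LLP, and by hypothesis $X$ is topologically finitely presented. The point is that $X$ itself is then an admissible test module $E$ in the definition of the LLP, so we can lift $\id_X$.

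To verify $\lm$-projectivity, fix a complete quotient morphism $q:Y\twoheadrightarrow Z$ in $\Amod$, a morphism $\vphi:X\rightarrow Z$, and $\ep>0$; after rescaling (the case $\vphi=0$ being trivial) we may assume $\norm{\vphi}_{cb}=1$. I would form the pullback
$$W:=\{(x,y)\in X\oplus Y\mid \vphi(x)=q(y)\},$$
where $X\oplus Y$ carries the $\ell^\infty$ operator space structure (the product of $X$ and $Y$ in $\Amod$). Then $W$ is the kernel of the morphism $(x,y)\mapsto\vphi(x)-q(y)$, hence a closed submodule of $X\oplus Y$ and an object of $\Amod$; write $\pi_X,\pi_Y$ for the completely contractive coordinate projections restricted to $W$, so $q\circ\pi_Y=\vphi\circ\pi_X$ on $W$. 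The key observation is that $\pi_X:W\twoheadrightarrow X$ is again a complete quotient morphism: given $[x_{i,j}]\in M_n(X)$ of norm $<1$, the bound $\norm{[\vphi(x_{i,j})]}\leq\norm{\vphi}_{cb}\norm{[x_{i,j}]}<1$ and the complete quotient property of $q$ produce $[y_{i,j}]\in M_n(Y)$ of norm $<1$ with $q_n([y_{i,j}])=[\vphi(x_{i,j})]$; then $[(x_{i,j},y_{i,j})]\in M_n(W)$ has norm $\max\{\norm{[x_{i,j}]},\norm{[y_{i,j}]}\}<1$ and maps onto $[x_{i,j}]$ under $(\pi_X)_n$.

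Now I would apply the $\lm$-LLP to the complete quotient morphism $\pi_X:W\twoheadrightarrow X$, using the topologically finitely presented module $E:=X$ and the morphism $\id_X\in\Hom(X,X)$: this yields $s:X\rightarrow W$ with $\norm{s}_{cb}<\lm\norm{\id_X}_{cb}+\ep=\lm+\ep$ and $\pi_X\circ s=\id_X$. Setting $\widetilde{\vphi}:=\pi_Y\circ s:X\rightarrow Y$, we obtain $\norm{\widetilde{\vphi}}_{cb}\leq\norm{\pi_Y}_{cb}\norm{s}_{cb}<\lm+\ep=\lm\norm{\vphi}_{cb}+\ep$ and $q\circ\widetilde{\vphi}=q\circ\pi_Y\circ s=\vphi\circ\pi_X\circ s=\vphi$. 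Undoing the rescaling handles arbitrary $\vphi$, and since $\ep>0$ was arbitrary, $X$ is $\lm$-projective.

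I do not expect a serious obstacle here: the argument is essentially ``apply the LLP to the identity of $X$ and transport the section along a pullback.'' The only points requiring care are (i) checking that the pullback projection $\pi_X$ is a genuine complete $1$-quotient map — which is exactly where the matrix-level bound $\norm{\vphi}_{cb}=1$ is used — and (ii) the routine $\ep$/rescaling bookkeeping needed to land the estimate $\lm\norm{\vphi}_{cb}+\ep$ rather than merely $\lm+\ep$. One could instead first record the standard equivalence ``$X$ is $\lm$-projective $\iff$ every complete quotient onto $X$ admits a section of $cb$-norm $<\lm+\ep$ for each $\ep>0$,'' but since the pullback makes that equivalence immediate, folding it into the main argument keeps the proof short.
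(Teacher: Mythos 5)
Your proof is correct and follows the same route as the paper, whose proof is a one-line invocation of Theorem~\ref{t:LLP} together with the fact that $X$, being topologically finitely presented, can serve as its own test module for lifting $\id_X$. The pullback reduction and the rescaling bookkeeping you spell out are exactly the details the paper leaves implicit.
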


\begin{proof} Simply apply Theorem \ref{t:LLP} together with the fact that $X$ is topologically finitely presented.
\end{proof}

As previously mentioned, the local reflexivity assumption in Theorem \ref{t:LLP} is satisfied whenever $A$ is the predual of a von Neumann algebra by \cite{EJR} (and \cite[Theorem 7.2.4]{ER}). In particular, when $A=\LOQ$ for a locally compact quantum group $\G$. Combining Theorem \ref{t:LLP} with \cite[Theorem 5.1]{C} (which equates $\LOQ$-injectivity of $\LIQ$ with amenability of $\h{\G}$), we obtain a local characterization of flatness of $\LOQ$ in the module category $\LOQmod$, which is new even for groups.

\begin{cor}\label{c:1} Let $\G$ be a locally compact quantum group. Then $\h{\G}$ is amenable if and only if $\LOQ$ has the 1-LLP in $\LOQmod$.
\end{cor}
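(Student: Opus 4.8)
The plan is to deduce this directly from Theorem~\ref{t:LLP} and \cite[Theorem 5.1]{C}, with no new work beyond bookkeeping. First I would check the standing hypothesis of Theorem~\ref{t:LLP} for $A=\LOQ$: since $\LOQ$ is the predual of the von Neumann algebra $\LIQ$, the Effros--Junge--Ruan result \cite{EJR} (together with \cite[Theorem 7.2.4]{ER}) guarantees that $A_+\pten T_I$ (with $A=\LOQ$) is locally reflexive for every index set $I$. Hence Theorem~\ref{t:LLP} applies with $\lm=1$ and yields: $\LOQ\in\LOQmod$ has the $1$-LLP if and only if $\LOQ$ is $1$-flat in $\LOQmod$.

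Next I would unwind the definition of $1$-flatness recalled in Section~\ref{ss:catnotions}: $\LOQ$ is $1$-flat in $\LOQmod$ precisely when its operator space dual $(\LOQ)^*=\LIQ$, equipped with the canonical dual module structure \eqref{e:dualaction}, is $1$-injective in $\modLOQ$. So the statement reduces to the equivalence ``$\LIQ$ is $1$-injective over $\LOQ$ $\iff$ $\h{\G}$ is amenable.'' This is exactly (the quantum-group-dual form of) \cite[Theorem 5.1]{C}, which equates $\LOQ$-injectivity of $\LIQ$ with amenability of $\h{\G}$: applying that theorem to $\h{\G}$ in place of $\G$ and using $\widehat{\widehat{\G}}=\G$ gives the desired equivalence. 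Chaining the two equivalences produces the corollary.

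There is no genuine obstacle here; the only point warranting a line of care is matching the notion of injectivity used in \cite{C} (a norm-preserving extension, equivalently a norm-one conditional-expectation-type module projection) with the $1$-injectivity of Section~\ref{ss:catnotions} — for dual von Neumann modules over $\LOQ$ these coincide, so no additional argument is needed, and everything else is a formal composition of previously established equivalences.
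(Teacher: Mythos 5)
Your proposal is correct and follows exactly the paper's argument: verify local reflexivity of $A_+\pten T_I$ for $A=\LOQ$ via \cite{EJR}, apply Theorem~\ref{t:LLP} with $\lm=1$ to equate the $1$-LLP with $1$-flatness (i.e., $1$-injectivity of $\LIQ$ in $\modLOQ$), and then invoke \cite[Theorem 5.1]{C} to identify this with amenability of $\h{\G}$. Nothing further is needed.
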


Thus, for a locally compact group $G$, $\LO$ always has the 1-LLP in $\LO\hskip2pt\mathbf{mod}$, and $A(G)$ has the 1-LLP in $A(G)\hskip2pt\mathbf{mod}$ if and only if $G$ is amenable.

\section{Nuclearity and Semi-discreteness}\label{s:nuc}

An operator space $X$ is \textit{nuclear} if there exist diagrams of complete contractions 

\begin{equation*}
\begin{tikzcd}
                     &M_{n_\alpha} \arrow[dr, "s_\alpha"]\\
X \arrow[ru, "r_\alpha"] \arrow[rr, "\id"] & &X
\end{tikzcd}
\end{equation*}
which approximately commute in the point-norm topology \cite[\S14.6]{ER}. In other words, the identity map on $X$ approximately factorizes through finitely co-generated co-free operator spaces. From this categorical perspective, the following definition is natural.

\begin{defn}\label{d:nuc} Let $A$ be a completely contractive Banach algebra, and $X,Y\in\Amod_1$. 
\begin{itemize}
\item A morphism $\vphi\in\Hom(X,Y)$ is \textit{nuclear} (or \textit{$A$-nuclear}) if there exist diagrams of morphisms

\begin{equation*}
\begin{tikzcd}
                     &M_{n_\alpha}(A_+^*) \arrow[dr, "s_\alpha"]\\
X \arrow[ru, "r_\alpha"] \arrow[rr, "\vphi"] & &Y
\end{tikzcd}
\end{equation*}
which approximately commute in the point-norm topology. 
\item An operator module $X\in\Amod_1$ is \textit{nuclear} (or \textit{$A$-nuclear}) if $\id_X\in\Hom(X,X)$ is nuclear. 
\end{itemize}
Similar definitions apply to right modules and their morphisms.
\end{defn}

Recently, a generalization of the weak expectation property was introduced in \cite{BC2} for operator modules over completely contractive Banach algebras. An object $X\in\Amod_1$ has the \textit{weak expectation property ($A$-WEP)} if for any completely isometric morphism $\kappa: X\hookrightarrow Y$ there exists a morphism $\psi:Y\rightarrow X^{**}$ such that $\psi\circ\kappa =i_X$, where $i_X:X\hookrightarrow X^{**}$ is the canonical inclusion. Any nuclear module has $A$-WEP, a consequence of:

\begin{prop}\label{p:nucWEP} Let $A$ be a completely contractive Banach algebra, and $X\in\Amod_1$. If the inclusion $i_X:X\hookrightarrow X^{**}$ is nuclear then $X$ has the $A$-WEP. In particular, nuclearity implies the WEP in $\Amod_1$.
\end{prop}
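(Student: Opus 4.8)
The plan is to use the nuclear factorization of $i_X$ through finitely co-generated co-free modules $M_{n_\alpha}(A_+^*)$ together with the $1$-injectivity of such modules. Recall from the excerpt that each $M_{n_\alpha}(A_+^*)=\mc{CB}(A_+,M_{n_\alpha})$ is co-free (in the sense of Definition \ref{d:cofree}), and hence $1$-injective in $\modA_1$. So first I would write the approximately commuting diagrams of morphisms $r_\alpha:X\rightarrow M_{n_\alpha}(A_+^*)$ and $s_\alpha:M_{n_\alpha}(A_+^*)\rightarrow X^{**}$ with $s_\alpha\circ r_\alpha\rightarrow i_X$ in the point-norm topology.

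Now suppose $\kappa:X\hookrightarrow Y$ is a completely isometric morphism in $\Amod_1$. By $1$-injectivity of $M_{n_\alpha}(A_+^*)$, each morphism $r_\alpha:X\rightarrow M_{n_\alpha}(A_+^*)$ extends along $\kappa$ to a morphism $\widetilde{r_\alpha}:Y\rightarrow M_{n_\alpha}(A_+^*)$ with $\widetilde{r_\alpha}\circ\kappa=r_\alpha$ and $\norm{\widetilde{r_\alpha}}_{cb}\leq\norm{r_\alpha}_{cb}\leq 1$. Setting $\psi_\alpha:=s_\alpha\circ\widetilde{r_\alpha}:Y\rightarrow X^{**}$, we obtain a net of morphisms with $\norm{\psi_\alpha}_{cb}\leq 1$ and $\psi_\alpha\circ\kappa=s_\alpha\circ r_\alpha\rightarrow i_X$ in the point-norm topology on $\Hom(X,X^{**})$. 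The remaining task is to pass from this net of ``approximate weak expectations'' $\psi_\alpha:Y\rightarrow X^{**}$ to an honest morphism $\psi:Y\rightarrow X^{**}$ with $\psi\circ\kappa=i_X$.

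For this I would invoke a compactness argument: the unit ball of $\Hom(Y,X^{**})=\mc{CB}(Y,X^{**})_A$ is compact in the point-weak* topology. Indeed, $\mc{CB}(Y,X^{**})$ is a dual space (it is the dual of $Y\pten X^*$, or one can use $\mc{CB}(Y,X^{**})\subseteq \prod_{y\in Y}X^{**}$ with the product of weak* topologies), and the $A$-module condition and the $cb$-norm bound are both preserved under point-weak* limits, so the unit ball of $\Hom(Y,X^{**})$ is point-weak* compact. Hence the net $(\psi_\alpha)$ has a point-weak* cluster point $\psi\in\Hom(Y,X^{**})$ with $\norm{\psi}_{cb}\leq 1$. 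Since $\psi_\alpha\circ\kappa\rightarrow i_X$ in point-norm (hence point-weak*), and composition with the fixed map $\kappa$ is point-weak*-continuous, we get $\psi\circ\kappa=i_X$. Therefore $X$ has the $A$-WEP. The ``in particular'' clause is then immediate: if $X$ is nuclear then $\id_X$ is nuclear, and composing the nuclear factorization of $\id_X$ with $i_X$ shows $i_X$ is nuclear, so the first part applies.

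The main obstacle I anticipate is the final compactness/limit step — specifically, being careful that the point-weak* cluster point genuinely lands in $\Hom(Y,X^{**})$ (i.e.\ is completely bounded and $A$-modular, not merely bounded and linear) and that the relation $\psi_\alpha\circ\kappa\rightarrow i_X$ survives passing to a subnet. Both points are standard, but they are the place where one must actually use that $X^{**}$ is a dual operator module and that the relevant ball is weak*-compact; everything before that (extending $r_\alpha$ via injectivity) is a direct application of the co-freeness of $M_{n_\alpha}(A_+^*)$ established earlier in the excerpt.
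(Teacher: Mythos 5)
Your proof is correct, and its technical core --- extending each $r_\alpha$ through the complete isometry $\kappa$ via the $1$-injectivity of the co-free modules $M_{n_\alpha}(A_+^*)$, then taking a point-weak* cluster point of $(s_\alpha\circ\widetilde{r_\alpha})$ --- is exactly the mechanism of the paper's proof. The difference is in the packaging: the paper runs the argument only for the distinguished embedding $\kappa:X\hookrightarrow\mc{CB}(A_+,\BH)$ obtained by composing a representation $X\subseteq\BH$ with $\Delta_X^+$, concludes that $i_X$ factors through a co-free (hence injective) module, and then cites the characterization of the $A$-WEP in \cite[Theorem 3.4(6)]{BC2}; you instead run the same extension-plus-cluster-point argument for an \emph{arbitrary} completely isometric $\kappa:X\hookrightarrow Y$ and verify the definition of the $A$-WEP directly. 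Your route is self-contained (no appeal to the external equivalence from \cite{BC2}), while the paper's buys a reusable intermediate statement about factorization through injectives. Your handling of the limit step is sound: $\mc{CB}(Y,X^{**})\cong(Y\pten X^*)^*$ gives point-weak* compactness of the unit ball, the $A$-module property and the cb-norm bound pass to point-weak* limits (the module action on $X^{**}$ is separately weak*-continuous, and the norm on $M_n(X^{**})=(T_n(X^*))^*$ is weak* lower semicontinuous), and point-norm convergence of $\psi_\alpha\circ\kappa$ to $i_X$ forces $\psi\circ\kappa=i_X$ for any cluster point. The reduction of the ``in particular'' clause to nuclearity of $i_X$ by post-composing the $s_\alpha$ with $i_X$ is likewise fine.
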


\begin{proof} Pick diagrams of morphisms

\begin{equation*}
\begin{tikzcd}
                     &M_{n_\alpha}(A_+^*) \arrow[dr, "s_\alpha"]\\
X \arrow[ru, "r_\alpha"] \arrow[rr, "i_{X}"] & &X^{**}
\end{tikzcd}
\end{equation*}
which approximately converge in the point-norm topology. Suppose $X\subseteq\BH$ in $\Op$, and let $\kappa:X\hookrightarrow\mc{CB}(A_+,\BH)$ be the completely isometric composition of this inclusion with the canonical embedding $\Delta_+:X\hookrightarrow\mc{CB}(A_+,X)$. By injectivity of the co-free module $M_{n_\alpha}(A_+^*)$, each $r_\alpha$ extends to a morphism $\widetilde{r_\alpha}:\mc{CB}(A_+,\BH)\rightarrow M_{n_\alpha}(A_+^*)$, for which $\widetilde{r_\alpha}\circ\kappa=r_\al$. Let $\vphi\in\mc{CB}(\mc{CB}(A_+,\BH),X^{**})$ be a weak*-cluster point of $(s_\al\circ \widetilde{r_\alpha})$. Then $\vphi\circ\kappa=i_X$. Hence, the inclusion $X\subseteq X^{**}$ factors through the co-free, hence injective module $\mc{CB}(A_+,\BH)$, so $X$ has the $A$-WEP by (the left version of) \cite[Theorem 3.4(6)]{BC2}.
\end{proof}

We have the analogous notion and result for dual modules.

\begin{defn}\label{d:semid} Let $A$ be a completely contractive Banach algebra, and $X,Y\in\Amod_1$ be dual modules. 
\begin{itemize}
\item A normal (i.e., weak*-weak* continuous) morphism $\vphi\in\Hom(X,Y)$ is \textit{weakly nuclear} (or \textit{weakly $A$-nuclear}) if there exist diagrams of normal morphisms

\begin{equation*}
\begin{tikzcd}
                     &M_{n_\alpha}(A_+^*) \arrow[dr, "s_\alpha"]\\
X \arrow[ru, "r_\alpha"] \arrow[rr, "\vphi"] & &Y
\end{tikzcd}
\end{equation*}
which approximately commute in the point-weak* topology. 
\item A dual module $X\in\Amod_1$ is \textit{semi-discrete} (or \textit{$A$-semi-discrete}) if $\id_X\in\Hom(X,X)$ is weakly nuclear. 
\end{itemize}
Similar definitions apply to dual right modules and their normal morphisms.
\end{defn}

As expected, any semi-discrete module is injective.

\begin{prop}\label{p:semiinj} Let $A$ be a completely contractive Banach algebra and $X\in\Amod_1$ be a dual module. If $X$ is semi-discrete then it is injective in $\Amod_1$.
\end{prop}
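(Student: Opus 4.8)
The plan is to mimic the classical argument that semi-discreteness (via factorization through injective objects) forces injectivity, using that each $M_{n_\alpha}(A_+^*)$ is a co-free, hence $1$-injective, module. The key point is that injectivity is preserved under taking a point-weak* limit of factorizations through injective modules, provided one works within dual modules and normal morphisms.

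First I would set up the extension problem: let $i:Y\hookrightarrow Z$ be a completely isometric morphism in $\Amod_1$ and $\psi:Y\rightarrow X$ a morphism; I want to extend $\psi$ to $\widetilde{\psi}:Z\rightarrow X$ with $\|\widetilde{\psi}\|_{cb}\leq\|\psi\|_{cb}$. Since $X$ is semi-discrete, fix a net of normal morphisms $r_\alpha:X\rightarrow M_{n_\alpha}(A_+^*)$ and $s_\alpha:M_{n_\alpha}(A_+^*)\rightarrow X$, each completely contractive, with $s_\alpha\circ r_\alpha\rightarrow \id_X$ point-weak*. For each $\alpha$, the composite $r_\alpha\circ\psi:Y\rightarrow M_{n_\alpha}(A_+^*)$ is a morphism into a $1$-injective module (co-free modules are $1$-injective, as noted after Definition \ref{d:cofree}), so it extends to a morphism $t_\alpha:Z\rightarrow M_{n_\alpha}(A_+^*)$ with $\|t_\alpha\|_{cb}\leq\|r_\alpha\circ\psi\|_{cb}\leq\|\psi\|_{cb}$ and $t_\alpha\circ i=r_\alpha\circ\psi$. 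Then $s_\alpha\circ t_\alpha:Z\rightarrow X$ is a net of morphisms, uniformly bounded by $\|\psi\|_{cb}$ in cb-norm. Because $X$ is a dual module, the ball of $\mc{CB}(Z,X)$ of a given radius is compact in the point-weak* topology (it embeds into a product of balls of $X$, which are weak* compact), so $(s_\alpha\circ t_\alpha)$ has a point-weak* cluster point $\widetilde{\psi}\in\mc{CB}(Z,X)$ with $\|\widetilde{\psi}\|_{cb}\leq\|\psi\|_{cb}$. One then checks $\widetilde{\psi}$ is an $A$-module map (the module relations are linear point-weak* closed conditions, preserved under cluster points) and that $\widetilde{\psi}\circ i=\psi$: for $y\in Y$, $\widetilde{\psi}(i(y))$ is a cluster point of $s_\alpha(t_\alpha(i(y)))=s_\alpha(r_\alpha(\psi(y)))\rightarrow\psi(y)$ weak*, hence equals $\psi(y)$. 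This produces the desired extension, so $X$ is $\|\psi\|_{cb}$-injective; in particular $1$-injective in $\Amod_1$.

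The main obstacle I anticipate is ensuring the cluster-point argument stays entirely inside the category of $A$-modules and respects the normality/weak* structure — specifically, verifying that a point-weak* cluster point of module morphisms is again a module morphism, and that taking the cluster point commutes appropriately with precomposition by $i$ so the extension identity holds exactly (not just approximately). This requires the weak* compactness of bounded balls of $\mc{CB}(Z,X)$ in the point-weak* topology when $X$ is a dual operator module, which is where the dual-module hypothesis is essential and is the analogue of the standard fact used for semi-discrete von Neumann algebras. The uniform cb-bound coming from $1$-injectivity of the co-free modules is what keeps the whole net in a single weak* compact set, and the fact that $s_\alpha\circ r_\alpha\to\id_X$ converts the approximate factorization into an exact extension in the limit.
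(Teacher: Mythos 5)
Your proof is correct and uses the same essential ingredients as the paper's: the $1$-injectivity of the co-free modules $M_{n_\alpha}(A_+^*)$ to extend the maps $r_\alpha$, followed by a point-weak* cluster point argument that relies on $X$ being a dual module with weak*-continuous action. The only structural difference is that the paper runs the cluster-point argument once to produce a completely contractive module projection from the injective module $\mc{CB}(A_+,\mc{B}(H))$ onto $X$ (concluding injectivity from complementation in an injective object), whereas you verify the extension property directly for each embedding $Y\hookrightarrow Z$; both routes are valid.
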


\begin{proof} Pick diagrams 
\begin{equation*}
\begin{tikzcd}
                     &M_{n_\alpha}(A_+^*) \arrow[dr, "s_\alpha"]\\
X \arrow[ru, "r_\alpha"] \arrow[rr, "\id_X"] & &X
\end{tikzcd}
\end{equation*}
of normal morphisms in $\Amod_1$ which approximately commute in the point-weak* topology. Also, pick an embedding $X\hookrightarrow\mc{CB}(A_+,\BH)$ in $\Amod_1$ for some Hilbert space $H$ (as above). By injectivity of $M_{n_\alpha}(A_+^*)$, each $r_\alpha$ extends to a morphism $\widetilde{r_\alpha}:\mc{CB}(A_+,\BH)\rightarrow M_{n_\alpha}(A_+^*)$. Let $\Phi\in\Hom(\mc{CB}(A_+,\BH),X)$ be weak*-cluster point of $(s_\alpha\circ\widetilde{r_\alpha})$. Then $\Phi$ is a module projection of norm 1, witnessing the injectivity of $X$.
\end{proof}

Clearly, any module of the form $M_n(A_+^*)$ is both $A$-nuclear and $A$-semi-discrete. Another simple class of examples follows from 

\begin{prop}\label{p:nuc} Let $A$ be a completely contractive Banach algebra. If $A$ has a contractive left (respectively, right) approximate identity, then $\la A\cdot A^*\ra$ (respectively, $\la A^*\cdot A\ra$) is nuclear and $A^*$ is semi-discrete in $\modA_1$ (respectively, $\Amod_1$).
\end{prop}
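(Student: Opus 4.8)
The plan is to treat the two (symmetric) cases together and focus on, say, a contractive right approximate identity $(e_i)$ in $A$, producing nuclear factorizations of $\id$ on $X:=\la A^*\cdot A\ra$ and weakly nuclear factorizations of $\id$ on $A^*$ as a left dual module. The key observation is that right multiplication by $e_i$ gives a net of morphisms on $A^*$ (using the module action \eqref{e:dualaction}, $f\cdot e_i$) converging to $\id_{A^*}$ in the appropriate topology, and that each such ``multiplication by an element of $A$'' morphism factors through a co-free module $M_{n}(A_+^*)$ — in fact through the rank-one module $A_+^*$ itself. So the factorizing index set will be $I$ with $n_\al=1$ throughout.

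First I would make precise the factorization of the single morphism $R_a:A^*\to A^*$, $f\mapsto f\cdot a$, through $A_+^*=M_1(A_+^*)$. The ``down'' map $s_a:A_+^*\to A^*$ should be the restriction-of-functionals/module-action map (or, more cleanly, the map $A_+^*\to A^*$ dual to the inclusion $A\hookrightarrow A_+$, suitably twisted by $a$), and the ``up'' map $r:A^*\to A_+^*$ should be dual to the multiplication-by-$a$ map $A_+\to A$, $b\mapsto ba$; composing recovers $R_a$. Both maps are completely contractive and, when one uses a contractive approximate identity so $\norm{a}=\norm{e_i}\le 1$, no normalization is needed. The same maps are automatically normal (weak*-weak* continuous), since they are adjoints of bounded maps between the preduals. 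Then, since $f\cdot e_i\to f$ weak* for every $f\in A^*$ (because $\la f\cdot e_i,b\ra=\la f,e_i b\ra\to\la f,b\ra$ — here I should double-check the left/right bookkeeping so that a \emph{right} approximate identity is what makes $A^*$ a \emph{left} module semi-discrete, which matches the statement), and complete boundedness is uniform (the maps are complete contractions), the net $(s_{e_i}\circ r_{e_i})=(R_{e_i})$ witnesses weak* semi-discreteness of $A^*$ in $\Amod_1$.

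For the nuclearity of $X=\la A^*\cdot A\ra$ in $\modA_1$, I would run the analogous argument but now in the point-\emph{norm} topology. On elements of the form $f\cdot a$ with $f\in A^*$, $a\in A$, we have $(f\cdot a)\cdot e_i=f\cdot(ae_i)\to f\cdot a$ in norm, since $ae_i\to a$ in $A$ and the module action is (completely) contractive; by a standard $\ep/3$ argument this extends to convergence $\norm{\xi\cdot e_i-\xi}\to 0$ for all $\xi$ in the closed linear span $X$, uniformly on compact subsets is not needed — just pointwise. Each $\xi\mapsto \xi\cdot e_i$ on $X$ factors, exactly as above, as $s_{e_i}\circ r_{e_i}$ through $A_+^*=M_1(A_+^*)$ with complete contractions (restricting the $A^*$-level maps to $X$; note $X$ is an essential right $A$-submodule, so these restrictions land back in $X$), giving the required nuclear diagrams for $\id_X$.

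The main obstacle I anticipate is bookkeeping rather than depth: getting the handedness conventions consistent (which side the approximate identity acts on, and hence whether the conclusion is about $\modA_1$ or $\Amod_1$), and verifying cleanly that the ``multiplication by $a\in A$'' morphism on $A^*$ genuinely factors through the co-free module $A_+^*$ via \emph{morphisms} in the category (i.e., that the up-map $r$ is an $A$-module map for the $\mc{CB}(A_+,\cdot)$-type left-module structure on $A_+^*$ recalled before \eqref{e:unit}) with control on the cb-norms. Once the single-morphism factorization $R_a=s_a\circ r_a$ is pinned down with $\norm{r_a}_{cb},\norm{s_a}_{cb}\le 1$ when $\norm{a}\le 1$, both statements follow by replacing $a$ with $e_i$ and invoking, respectively, norm-convergence on $\la A^*\cdot A\ra$ and weak*-convergence on all of $A^*$.
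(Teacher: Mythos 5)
Your proposal is essentially the paper's own argument: factor multiplication by the approximate identity through the rank-one co-free module $A_+^*=M_1(A_+^*)$, with the ``up'' map dual to a multiplication map $A_+\to A$ and the ``down'' map built from the projection $p:A_+^*=A^*\oplus_\infty\C\to A^*$, normality coming for free because everything is an adjoint, and then norm convergence on the essential submodule versus weak* convergence on all of $A^*$ giving nuclearity and semi-discreteness respectively. Two places where your literal description needs tightening, both resolved in the paper the same way. First, the up map cannot be left untwisted: the inclusion $f\mapsto(f,0)$ of $A^*$ into $A_+^*$ is \emph{not} a module map, since the unitized dual action is $(x,\lm)\cdot a=(x\cdot a,\la x,a\ra)$ and produces a nonzero scalar entry; so the up leg must carry a copy of $e_i$ (it is $f\mapsto(f\cdot e_i,\la f,e_i\ra)$, dual to multiplication by $e_i$ on $A_+$), and for the nuclearity statement the down leg must carry one as well so that it lands in $\la A^*\cdot A\ra$ rather than all of $A^*$. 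Consequently the composition is $\xi\mapsto\xi\cdot e_i^2$ rather than $\xi\cdot e_i$ (the paper gets $a_\al^2$ for the same reason); this is harmless since $(e_i^2)$ is again a contractive approximate identity of the same handedness. Second, your worry about the bookkeeping is warranted: with $\la f\cdot a,b\ra=\la f,ab\ra$ the maps $f\mapsto f\cdot e_i$ are \emph{left}-module maps, the weak* convergence $f\cdot e_i\to f$ on all of $A^*$ requires $e_ib\to b$ (a left approximate identity), while the norm convergence $(f\cdot a)\cdot e_i^2=f\cdot(ae_i^2)\to f\cdot a$ on $\la A^*\cdot A\ra$ requires $ae_i\to a$ (a right one); so the two conclusions in a fixed category are actually driven by opposite-handed approximate identities, a point on which the statement itself (and its proof) is somewhat cavalier and which is immaterial in the intended applications, where $\LOQ$ has approximate identities of both hands. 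Once these adjustments are made your argument is the paper's proof.
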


\begin{proof} Viewing $A_+$ in $\Amod_1$, the right module structure on $A_+^*=A^*\oplus_\infty \C$ is given by
$$(x,\lm)\cdot a=(x\cdot a,\la x,a\ra), \ \ \ a\in A, \ x\in A^*, \ \lm\in\C.$$
It follows that the projection $p:A_+^*\ni(x,\lm)\mapsto x\in A^*$ is a morphism in $\modA_1$.

Let $(a_\alpha)$ be a contractive left approximate identity for $A$. Then we obtain 
diagrams of morphisms in $\modA_1$

\begin{equation*}
\begin{tikzcd}
                     &A_+^* \arrow[dr, "s_\alpha"]\\
\la A\cdot A^*\ra \arrow[ru, "r_\alpha"] \arrow[rr, "\id"] & &\la A\cdot A^*\ra,
\end{tikzcd}
\end{equation*}
where $r_\alpha(x)=(a_\alpha\cdot x,\la x,a_\alpha\ra)$, and $s_\alpha(x,\lm)=l(a_\alpha)\circ p(x,\lm)=a_{\alpha}\cdot x$. The composition $s_\alpha\circ r_\alpha(x)=a_\alpha^2\cdot x\rightarrow x$ for all $x\in \la A\cdot A^*\ra$. Hence, $\la A\cdot A^*\ra$ is nuclear in $\modA_1$. Similarly, $s_\alpha\circ r_\alpha(x)=a_\alpha^2\cdot x\rightarrow x$ weak* for all $x\in A^*$. 

The argument for the right case is identical.
\end{proof}

Letting $A=\LOQ$ in Proposition \ref{p:nuc} for a co-amenable locally compact quantum group $\G$, we see that both $\mathrm{LUC}(\G):=\la\LIQ\star\LOQ\ra$ and $\mathrm{RUC}(\G):=\la\LOQ\star\LIQ\ra$ are $\LOQ$-nuclear, and $\LIQ$ is $\LOQ$-semi-discrete. In particular, for any locally compact group $G$, $\mathrm{LUC}(G)$ and $\mathrm{RUC}(G)$ are $\LO$-nuclear and $\LI$ is $\LO$-semi-discrete. Dually, for an amenable locally compact group $G$, $\mathrm{UCB}(\h{G})$ is $A(G)$-nuclear and $VN(G)$ is $A(G)$-semi-discrete, where  $\mathrm{UCB}(\h{G})=\la A(G)\cdot VN(G)\ra$ is the space of uniformly continuous linear functionals on $A(G)$, introduced by Granirer \cite{Gran}. When $G$ is discrete, $\mathrm{UCB}(\h{G})=C^*_\lm(G)$, so $C^*_\lm(G)$ is $A(G)$-nuclear for any discrete amenable group $G$. 

We now establish the converse of Proposition \ref{p:nuc} for any locally compact quantum group. The next two results are helpful in this regard.

\begin{lem}\label{l:nuc} Let $X\in\modA_1$ be faithful. Any morphism $A_+^*\rightarrow X$ in $\modA_1$ is of the form $\vphi\circ p$, where $p:A_+^*\rightarrow A^*$ is the canonical projection and $\vphi\in\Hom(A^*,X)$. If, in addition, $X$ is a dual module and $A_+^*\rightarrow X$ is normal, the resulting morphism $\vphi\in\Hom(A^*,X)$ is normal.
\end{lem}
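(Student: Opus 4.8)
The plan is to use the explicit description $A_+^*=A^*\oplus_\infty\C$ recorded in the proof of Proposition~\ref{p:nuc}, and to exploit faithfulness of $X$ in order to eliminate the one-dimensional summand. First I would fix the two structural maps: the canonical projection $p\colon A_+^*\to A^*$, $(x,\lambda)\mapsto x$, which is a completely contractive right $A$-module map (it is the adjoint of the inclusion $A\hookrightarrow A_+$, and one checks directly $p((x,\lambda)\cdot a)=x\cdot a=p(x,\lambda)\cdot a$); and the canonical inclusion $\iota\colon A^*\to A_+^*$, $x\mapsto(x,0)$, which is completely isometric (it is the inclusion of a summand of an $\ell^\infty$-direct sum) and weak*-continuous (it is the adjoint of the projection $A_+\to A$), and which satisfies $p\circ\iota=\id_{A^*}$. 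Note that $\iota$ is \emph{not} in general an $A$-module map, so the factorization will not be immediate from $\iota$ alone.

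The heart of the argument, and the only place faithfulness is used, is the following. Given a morphism $\Psi\colon A_+^*\to X$ in $\modA_1$, the formula $(x,\lambda)\cdot a=(x\cdot a,\langle x,a\rangle)$ for the right $A$-action shows $(0,1)\cdot a=(0\cdot a,\langle 0,a\rangle)=0$ for every $a\in A$. Hence $x_0:=\Psi(0,1)\in X$ satisfies $x_0\cdot a=\Psi\big((0,1)\cdot a\big)=0$ for all $a\in A$, and faithfulness of $X$ forces $x_0=\Psi(0,1)=0$. Equivalently, $\Psi$ annihilates $\ker p=\{(0,\lambda)\mid\lambda\in\C\}$.

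It then remains to set $\vphi:=\Psi\circ\iota\colon A^*\to X$ and verify the claims, all of which are routine. It is completely contractive, being a composition of complete contractions. It is an $A$-module map: expanding $\vphi(x)\cdot a=\Psi((x,0)\cdot a)=\Psi(x\cdot a,\langle x,a\rangle)$ and using $\Psi(0,1)=0$ to drop the scalar contribution gives $\Psi(x\cdot a,0)=\vphi(x\cdot a)$. Moreover $\vphi\circ p=\Psi$, since $\vphi(p(x,\lambda))=\Psi(x,0)=\Psi(x,\lambda)-\lambda\Psi(0,1)=\Psi(x,\lambda)$. Finally, if $X$ is a dual module and $\Psi$ is normal, then $\vphi=\Psi\circ\iota$ is a composition of weak*-continuous maps and hence normal. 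Beyond the faithfulness observation there is no genuine obstacle.
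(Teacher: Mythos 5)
Your proof is correct and follows essentially the same route as the paper: decompose $\Psi$ linearly along $A_+^*=A^*\oplus_\infty\C$, use the formula $(0,\lambda)\cdot a=0$ together with faithfulness of $X$ to kill the image of the one-dimensional summand, and then read off the factorization through $p$. The only difference is cosmetic — you spell out the verifications (module property of $\vphi$, the identity $\vphi\circ p=\Psi$, and weak*-continuity of $\iota$) that the paper leaves implicit.
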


\begin{proof} Let $\psi\in\Hom(A_+^*,X)$. Since $\psi$ is $\C$-linear we can write
$$\psi(x,\lm)=\psi(x,0)+\psi(0,\lm)=\vphi(x)+\lm x_\psi,$$
where $\vphi=\psi|_{A^*\oplus 0}\in\mc{CB}(A^*,X)$ and $\psi|_{0\oplus\C}\in\mc{CB}(\C,X)$ is uniquely determined by $x_\psi\in X$. Then for every $\lm\in \C$ and $a\in A$,
$$0=\psi((0,\lm)\cdot a)=\psi(0,\lm)\cdot a=\lm x_\psi\cdot a.$$
Since $X$ is faithful, this forces $x_\psi=0$. Thus, $\psi(x,\lm)=\vphi(x)=\vphi\circ p(x,\lm)$, and $\vphi$ is necessarily a morphism. 

The normality statement is immediate.
\end{proof}

Recall that a locally compact quantum group $\G$ has the \textit{approximation property} if there exists a net $(\hat{f}_i')$ in $\LOQHP$ such that $\h{\Theta'}^{r}(\hat{f}'_i)$ converges to $\id_{\LIQHP}$ in the stable point-weak* topology, meaning $\h{\Theta'}^{r}(\hat{f}'_i)\ten\id_{M_\infty}\rightarrow\id_{\LIQHP\oten M_\infty}$ point-weak*, where $M_\infty=M_{\N}$ in the notation of section \ref{ss:op}. See \cite{ER4} for details on this topology for dual operator spaces, and \cite{KR2} for the approximation property of Kac algebras.

\begin{prop}\label{p:Wepamen} Let $\G$ be a locally compact quantum group. Consider the following conditions:
\begin{enumerate}
\item $\G$ is co-amenable;
\item $C_0(\G)$ has the WEP in $\modLOQ_1$;
\item $\h{\G}$ is amenable.
\end{enumerate}
\end{prop}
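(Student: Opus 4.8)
The plan is to establish the chain of implications $(1)\Rightarrow(2)\Rightarrow(3)$, and possibly note that $(3)\Rightarrow(1)$ fails in general (so this is a one-directional proposition, as the numbering suggests), though more likely the intended statement is that these are simply listed as a chain. I will prove each implication in turn.

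For $(1)\Rightarrow(2)$: if $\G$ is co-amenable then $\LOQ$ has a contractive (bounded, then rescaled) approximate identity. Since $C_0(\G)$ is an essential $\LOQ$-module with $\LOQ\cdot C_0(\G)$ dense (co-amenability gives that $C_0(\G)$ acts non-degenerately), and in fact co-amenability means the counit is bounded so $C_0(\G)$ is a \emph{unital}-like object in a weak sense, I would instead argue directly: by co-amenability there is a state $\epsilon\in C_0(\G)^*$ (the counit) which is a two-sided identity for the $M(\G)$-action in the appropriate sense. Actually the cleanest route is via Proposition~\ref{p:nucWEP}-style reasoning: co-amenability implies $\mathrm{LUC}(\G)=\la\LIQ\star\LOQ\ra$ contains $C_0(\G)$ and the bai lets us build an averaging. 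More precisely, embed $C_0(\G)\hookrightarrow Y$ completely isometrically in $\modLOQ_1$; using the contractive bai $(f_\alpha)$ in $\LOQ$ and the co-unit as a weak* limit point of $(f_\alpha)$ in $C_0(\G)^{**}=C_0(\G)^*{}^*$, one produces a conditional-expectation-type map $Y\to C_0(\G)^{**}$ splitting the inclusion, giving $\LOQ$-WEP. This mirrors \cite[Theorem 3.4]{BC2} combined with the fact that co-amenability provides an invariant mean-like element.

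For $(2)\Rightarrow(3)$: this is the substantive implication. The WEP of $C_0(\G)$ in $\modLOQ_1$ should be dualized/transported to yield an invariant mean on $\LIQH$, which by definition is amenability of $\h{\G}$. The mechanism: $\LOQ$-WEP of $C_0(\G)$ gives a norm-one $\LOQ$-module projection-type map $\Phi$ from some injective co-free module containing $C_0(\G)$ onto $C_0(\G)^{**}$. Composing with the natural map $C_0(\G)^{**}\to \LIQ$ (the normal extension of the identity, or rather using $C_0(\G)^{**}\supseteq\LIQ$) and pairing against a fixed state, one extracts a functional on $\LIQHP$ or directly on $\LIQH$ that is invariant under the dual convolution, i.e. a right-invariant mean on $\LIQH$ viewed via the dual. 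The key computation is that the $\LOQ$-module structure on $C_0(\G)$ is via $f\star x=(\id\ten f)\Gam(x)$, and the module-map property of $\Phi$ translates exactly into the invariance equation $\la m, x\star f\ra=\la f,1\ra\la m,x\ra$ for the resulting mean $m$ on $\LIQ$ — but invariance on $\LIQ$ together with the structure gives amenability of $\h\G$ by the quantum-group duality (an invariant mean on $\LIQ$ that is also normal-ish, or the standard fact that $\LOQ$-injectivity/WEP of the right objects detects $\widehat\G$-amenability as in \cite[Theorem 5.1]{C}).

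The main obstacle I anticipate is implication $(2)\Rightarrow(3)$: carefully identifying which module the WEP map lands in and verifying the invariance identity survives passage to the bidual and restriction. One must be careful that $C_0(\G)^{**}$ is not simply $\LIQ$ but contains it, and the module action on the bidual is the natural one; checking that the averaging element obtained is genuinely a mean (state) and genuinely invariant requires tracking the $\LOQ$-module maps through the two applications of Proposition~\ref{p:nucWEP}'s technique and the co-free embedding $C_0(\G)\hookrightarrow\mc{CB}(\LOQ,\BH)$. The implication $(1)\Rightarrow(2)$ should be comparatively routine given the bai, and if a reverse implication $(3)\Rightarrow(1)$ were claimed it would be \emph{false} (Kac algebras that are amenable but not co-amenable exist), consistent with the proposition being stated as a list of one-way implications rather than an equivalence.
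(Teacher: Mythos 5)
Your high-level strategy for $(1)\Rightarrow(2)$ is essentially the paper's: co-amenability gives a contractive approximate identity, which (via Proposition \ref{p:nuc}) makes $\RUC(\G)=\la\LOQ\star\LIQ\ra$ nuclear in $\modLOQ_1$; since $C_0(\G)\subseteq\RUC(\G)\subseteq C_0(\G)^{**}$, the inclusion $C_0(\G)\hookrightarrow C_0(\G)^{**}$ is nuclear and Proposition \ref{p:nucWEP} gives the WEP. Two corrections: in $\modLOQ_1$ the relevant space is $\RUC(\G)$, not $\mathrm{LUC}(\G)$; and the step your sketch leaves unexplained --- how the maps built from the approximate identity extend from $C_0(\G)$ to an arbitrary containing module $Y$ --- is precisely what factoring through the injective co-free modules $M_n(\LOQ_+^*)$ buys you. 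A direct ``averaging against the counit'' does not by itself produce the required extension to $Y$.

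The genuine gap is in $(2)\Rightarrow(3)$. The mechanism is not ``an invariant mean on $\LIQ$'': a left invariant mean on $\LIQ$ is by definition amenability of $\G$, not of $\h{\G}$, and the invariance identity you wrote down is the wrong one. What the paper does: take the weak expectation $\vphi:\BLTQ\rightarrow C_0(\G)^{**}$ (module action $T\star f=(f\ten\id)W^*(1\ten T)W$), note that by \cite[Lemma 2.3]{BC2} it is automatically completely positive and $M(C_0(\G))$-bimodular, and compose with $C_0(\G)^{**}\twoheadrightarrow\LIQ$. The $\LOQ$-module property then forces $\vphi(\LIQHP)\subseteq\C 1$ (because $W\in\LIQ\oten\LIQH$ commutes with $1\ten\hat{x}'$, so $\Gam(\vphi(\hat{x}'))=1\ten\vphi(\hat{x}')$ and \cite[Result 5.13]{KV1} applies); thus $\vphi|_{\LIQHP}$ is scalar-valued, and complete positivity gives unitality. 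The invariance of this state under the $\h{\G}'$-convolution is \emph{not} automatic from the module property alone: it requires showing $V=\h{W}'$ lies in the multiplicative domain of $\id\ten\vphi$ and using the bimodule property there. One then obtains a left invariant mean on $\LIQHP$ and transports it to $\LIQH$ by the unitary antipode. None of these steps appear in your sketch, and without the $\C 1$-valuedness and multiplicative-domain arguments the proof does not close. Finally, your side remark that $(3)\Rightarrow(1)$ ``would be false'' is wrong: that implication is the long-standing open duality problem (an amenable, non-co-amenable $\Hb$ is not a counterexample --- you would need $\h{\G}$ amenable with $\G$ not co-amenable), and the proposition in fact asserts the full equivalence when $\G$ is co-commutative (Leptin's theorem) or when $\h{\G}$ has the approximation property, a part of the statement your proposal omits.
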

Then $(1)\Rightarrow(2)\Rightarrow(3)$. When $\G$ is co-commutative, or if $\h{\G}$ has the approximation property, the conditions are equivalent.

\begin{proof} $(1)\Rightarrow(2)$: By Proposition \ref{p:nuc}, $(1)$ implies that $\RUC=\la\LOQ\star\LIQ\ra$ is nuclear in $\modLOQ_1$. Since 
$$C_0(\G)\subseteq\RUC\subseteq M(C_0(\G))\subseteq C_0(\G)^{**}$$
\cite{Runde}, it follows that the inclusion $C_0(\G)\hookrightarrow C_0(\G)^{**}$ is nuclear, so (the right module version of) Proposition \ref{p:nucWEP} entails $(2)$.

$(2)\Rightarrow(3)$: If $C_0(\G)$ has the WEP in $\modLOQ_1$, then there exists a morphism $\vphi:\BLTQ\rightarrow C_0(\G)^{**}$ such that $\vphi|_{C_0(\G)}=i_{C_0(\G)}$. Here, the pertinent $\LOQ$-module structure on $\BLTQ$ is
$$T\star f=(f\ten\id)W^*(1\ten T)W, \ \ \ f\in\LOQ, \ T\in\BLTQ.$$
Then $\vphi$ is a weak expectation, and hence, by \cite[Lemma 2.3]{BC2}, a completely positive $M(C_0(\G))$-bimodule map. Composing with the canonical morphism $C_0(\G)^{**}\twoheadrightarrow\LIQ$, we obtain a completely positive $\LOQ$-morphism $\vphi:\BLTQ\rightarrow\LIQ$ which is the identity on $C_0(\G)$ and is also an $M(C_0(\G))$-bimodule map. The $\LOQ$-module property implies that $\vphi(\LIQHP)\subseteq \C1$: since $W\in\LIQ\oten\LIQH$, for any $\hat{x}'\in\LIQHP$ and any $f\in\LOQ$,
\begin{align*}\la f,1\ra\vphi(\hat{x}')&=\vphi((f\ten\id)(1\ten\hat{x}'))\\
&=\vphi((f\ten\id)W^*(1\ten\hat{x}')W)\\
&=\vphi(\hat{x}'\star f)\\
&=\vphi(\hat{x}')\star f.
\end{align*}
It follows that $\Gam(\vphi(\hat{x}'))=1\ten \vphi(\hat{x}')$, which, by \cite[Result 5.13]{KV1} implies $\vphi(\hat{x}')\in\C1$. In particular, $\vphi(1)=\lm1$ for some $\lm\geq0$. But then complete positivity implies
$$\lm=\norm{\lm1}=\norm{\vphi(1)}=\norm{\vphi}_{cb}=1,$$
so $\vphi$ is unital. 

Since the left fundamental unitary $\h{W}'$ of $\h{\G}'$ is the right fundamental unitary $V$ of $\G$ (see \cite[\S4]{KV2}), and $V\in M(C_0(\h{\G}')\iten C_0(\G))$ (see, e.g., \cite[Theorem 1.5]{Wo}), we have
$$\h{W}'\in M(C_0(\h{\G}')\iten C_0(\G))\subseteq\LIQHP\oten\BLTQ.$$
It follows (similar to \cite[Theorem 3.2]{NV}) that
$$(\hat{f}'\ten\id)(\id\ten\vphi)(\h{W}')=\vphi(((\hat{f}'\ten\id)(\h{W}'))=(\hat{f}\ten\id)(\h{W}'), \ \ \ \hat{f}'\in\LOQHP,$$
as $\vphi$ is unital and $(\hat{f}'\ten\id)(\h{W}')\in M(C_0(\G))$. Hence, $(\id\ten\vphi)(\h{W}')=\h{W}'$, and by unitarity, $\h{W}'$ is in the multiplicative domain of $(\id\ten\vphi)$. The bimodule property of completely positive maps over their multiplicative domains implies that
\begin{align*}\vphi((\hat{f}'\ten\id)(\h{W}'^*(1\ten T)\h{W}'))&=(\hat{f}'\ten\id)(\id\ten\vphi)((\h{W}'^*(1\ten T)\h{W}'))\\
&=(\hat{f}'\ten\id)(\h{W}'^*(1\ten \vphi(T))\h{W}'),
\end{align*}
for all $T\in\BLTQ$ and $\hat{f}'\in\LOQHP$. Thus, $\vphi|_{\LIQHP}:\LIQHP\rightarrow\C$ satisfies
$$\vphi|_{\LIQHP}(\hat{x}'\hat{\star}'\hat{f}')=\vphi|_{\LIQH}(\hat{x}')\hat{\star}'\hat{f}', \ \ \ \hat{x}'\in\LIQHP, \ \hat{f}'\in\LOQHP,$$
so is a left invariant mean, and $\h{\G}'$ is amenable. The standard argument shows that $\h{\G}$ is amenable, which we include for convenience. 

Let $R:\BLTQ\rightarrow\BLTQ$ be the *-anti-automorphism $R(T)=\h{J}T^*\h{J}$, where $\h{J}$ is the modular conjugation of the dual Haar weight $\h{h_L}$. Then $R(\LIQH)=\h{J}\LIQH\h{J}=\LIQHP$, and by \cite[Proposition 2.15]{KV2}
$$(R\ten R)(\h{W})=(\h{J}\ten\h{J})\h{W}^*(\h{J}\ten\h{J})=V^*.$$
Let $\hat{m}=\vphi|_{\LIQHP}\circ R\in\LIQH^*$. Then $\hat{m}$ is a state satisfying
\begin{align*}\hat{m}(\hat{x}\star\hat{f})&=\hat{m}((\hat{f}\ten\id)\h{W}^*(1\ten\hat{x})\h{W})\\
&=\vphi|_{\LIQHP}((\hat{f}\circ R\ten\id)(R\ten R)(\h{W}^*(1\ten\hat{x})\h{W}))\\
&=\vphi|_{\LIQHP}((\hat{f}\circ R\ten\id)(R\ten R)(\h{W})(1\ten R(\hat{x}))(R\ten R)(\h{W}^*))\\
&=\vphi|_{\LIQHP}((\hat{f}\circ R\ten\id)V^*(1\ten R(\hat{x}))V)\\
&=\vphi|_{\LIQHP}((\hat{f}\circ R\ten\id)\h{\Gam}'(R(\hat{x})))\\
&=\vphi|_{\LIQHP}(R(\hat{x}))\la\hat{f},1\ra\\
&=\hat{m}(\hat{x})\la\hat{f},1\ra\\
\end{align*}
for all $\hat{x}\in\LIQH$ and $\hat{f}\in\LOQH$. Thus, $\hat{m}$ is a left invariant mean on $\LIQH$.

When $\G$ is co-commutative, the equivalence of $(1)$ and $(3)$ is simply an application of Leptin's theorem \cite{Lep}. When $\h{\G}$ has the approximation property, $(3)\Rightarrow(1)$ was shown in \cite[Corollary 7.4]{C2}.
\end{proof}

\begin{thm}\label{t:coamenequiv} Let $\G$ be a locally compact quantum group. The following conditions are equivalent:
\begin{enumerate}
\item $\G$ is co-amenable;
\item $\LIQ$ is semi-discrete in $\modLOQ_1$;
\item $\LIQ$ is semi-discrete in $\LOQmod_1$;
\item $\mathrm{RUC}(\G)$ is nuclear in $\modLOQ_1$;
\item $\mathrm{LUC}(\G)$ is nuclear in $\LOQmod_1$;
\item The inclusion $C_0(\G)\hookrightarrow C_0(\G)^{**}$ is nuclear in $\modLOQ_1$;
\item The inclusion $C_0(\G)\hookrightarrow C_0(\G)^{**}$ is nuclear in $\LOQmod_1$.
\end{enumerate}
\end{thm}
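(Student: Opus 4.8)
The plan is to establish the cycle of implications $(1)\Rightarrow(4)\Rightarrow(6)\Rightarrow(1)$, together with the parallel chain $(1)\Rightarrow(5)\Rightarrow(7)\Rightarrow(1)$ and the equivalences $(1)\Leftrightarrow(2)$, $(1)\Leftrightarrow(3)$. Several of these arrows are already essentially in hand. The implication $(1)\Rightarrow(4)$ is immediate from Proposition \ref{p:nuc} applied to $A=\LOQ$, since co-amenability gives a contractive (indeed, by \cite{BT}, bounded, which one can renormalize to contractive) approximate identity for $\LOQ$, so $\RUC=\la\LOQ\star\LIQ\ra$ is $\LOQ$-nuclear in $\modLOQ_1$; similarly $(1)\Rightarrow(5)$ and $(1)\Rightarrow(2)$, $(1)\Rightarrow(3)$. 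The implications $(4)\Rightarrow(6)$ and $(5)\Rightarrow(7)$ follow by restricting the approximate factorization of $\id_{\RUC}$ (resp. $\id_{\LUC}$) to $C_0(\G)$ and composing with the corestriction $C_0(\G)^{**}\twoheadrightarrow$ (the appropriate containing module), using the nesting $C_0(\G)\subseteq\RUC\subseteq M(C_0(\G))\subseteq C_0(\G)^{**}$ from \cite{Runde} as in the proof of Proposition \ref{p:Wepamen}; nuclearity of a morphism is stable under pre- and post-composition with morphisms. Likewise $(2)\Rightarrow(4)$ and $(3)\Rightarrow(5)$ are routine: semi-discreteness of $\LIQ$ restricts to weak nuclearity of the relevant inclusion, and one intersects with $\RUC$ (resp. $\LUC$), noting $\RUC$ is weak*-dense in $\LIQ$ so the point-weak* approximating net restricts appropriately and lands back in $\RUC$ by faithfulness/module considerations.

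The substantive direction is to close the loop, i.e. to prove $(6)\Rightarrow(1)$ (and, symmetrically, $(7)\Rightarrow(1)$). The key point is that $\LOQ$-nuclearity of the inclusion $C_0(\G)\hookrightarrow C_0(\G)^{**}$ forces both co-amenability-type conclusions \emph{and} brings in the left version of Proposition \ref{p:nucWEP}. Explicitly, from nuclearity of $i_{C_0(\G)}$ one gets, via Proposition \ref{p:nucWEP}, that $C_0(\G)$ has the $\LOQ$-WEP in $\modLOQ_1$, which is condition (2) of Proposition \ref{p:Wepamen}; that proposition then yields amenability of $\h{\G}$. So at this stage we have $\h{\G}$ amenable. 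To upgrade ``$\h{\G}$ amenable'' to ``$\G$ co-amenable'' is precisely where nuclearity (rather than mere WEP) must be exploited, since in general these are not equivalent. The idea is that the approximating maps $r_\alpha:C_0(\G)\rightarrow M_{n_\alpha}(\LOQ^{*})=M_{n_\alpha}(\LIQ)$ and $s_\alpha:M_{n_\alpha}(\LIQ)\rightarrow C_0(\G)^{**}$, being morphisms in $\modLOQ_1$, interact with the fundamental unitary $W$; by Lemma \ref{l:nuc} each of the finitely many matrix components of $r_\alpha$ and $s_\alpha$ is governed by a module map out of $\LOQ^{*}=\LIQ$, and composing the factorization with a fixed state and a fixed element of $C_0(\G)$ one extracts, in the limit, a net in $\LOQ$ witnessing an approximate identity (i.e. co-amenability via \cite{BT}). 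Alternatively—and perhaps more cleanly—one argues that $\LOQ$-nuclearity of $i_{C_0(\G)}$ implies the trivial module $\C$ (equivalently the co-unit $\LOQ\rightarrow\C$) is ``nuclearly approximated'', which is a reformulation of a bounded approximate identity for $\LOQ$.

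Concretely, for $(6)\Rightarrow(1)$ I would proceed as follows. First, Proposition \ref{p:nucWEP} gives $C_0(\G)$ has $\LOQ$-WEP, hence by Proposition \ref{p:Wepamen} that $\h{\G}$ is amenable, hence $\LIQH$ carries a left invariant mean; dually this produces (as in the end of the proof of Proposition \ref{p:Wepamen}, running the $R$-twist argument in reverse) the datum needed on the $\G$-side. Second, and this is the crux, I would use the factorization $\id_{C_0(\G)}\approx s_\alpha\circ r_\alpha$ through $M_{n_\alpha}(\LIQ)$ directly: evaluate at the identity-like behaviour on $\lm(\LOQH)\subseteq C_0(\G)$ and pair with the fundamental unitary to manufacture elements $f_\alpha\in\LOQ$ with $\|f_\alpha\|\le 1$ and $f_\alpha\star x\to x$ for $x$ in a dense subset of $C_0(\G)$; co-amenability then follows from \cite[Theorem 3.1]{BT}. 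The parallel statement $(7)\Rightarrow(1)$ is obtained by the same argument with left/right modules interchanged (and $V$, $\rho$ in place of $W$, $\lm$). I expect the main obstacle to be the second step: extracting a genuine \emph{contractive} approximate identity for $\LOQ$ out of the matricial factorization. The matrix amplification $M_{n_\alpha}(\LOQ^*)$ is harmless categorically but obstructs naive norm estimates, so one must carefully use that $r_\alpha,s_\alpha$ are \emph{module} maps (Lemma \ref{l:nuc}) and that the relevant slices of $W$ lie in $M(C_0(\G))$, rather than attempting a direct computation with the matrix blocks; controlling the norms through this reduction is the delicate point. Everything else is bookkeeping with the already-established Propositions \ref{p:nuc}, \ref{p:nucWEP}, \ref{p:semiinj}, \ref{p:Wepamen} and the containment chain from \cite{Runde,BT}.
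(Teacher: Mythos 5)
Your overall architecture is close to the paper's, and several arrows are handled exactly as there: $(1)\Rightarrow(2),(3),(4),(5)$ via Proposition \ref{p:nuc}, $(4)\Rightarrow(6)$ by restricting along $C_0(\G)\subseteq\RUC\subseteq M(C_0(\G))\subseteq C_0(\G)^{**}$, and the first half of $(6)\Rightarrow(1)$ (nuclearity $\Rightarrow$ $\LOQ$-WEP $\Rightarrow$ amenability of $\h{\G}$ via Propositions \ref{p:nucWEP} and \ref{p:Wepamen}). However, two steps have genuine gaps. First, $(2)\Rightarrow(4)$ (and $(3)\Rightarrow(5)$) is not routine and, as you describe it, fails: semi-discreteness of $\LIQ$ only gives $s_\al\circ r_\al\to\id$ in the point-\emph{weak*} topology (and through factorizations by \emph{normal} maps), whereas nuclearity of $\RUC$ demands point-\emph{norm} convergence. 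Restricting a point-weak* approximating net to the weak*-dense subspace $\RUC$ yields nothing in norm, there is no reason the maps $s_\al$ take values in $\RUC$, and a Mazur-type convexity upgrade is unavailable because weak* convergence in $\LIQ$ is strictly weaker than weak convergence in the Banach space $\RUC$. The paper avoids this entirely by proving $(2)\Rightarrow(1)$ directly: Proposition \ref{p:semiinj} gives injectivity of $\LIQ$ in $\modLOQ_1$, hence amenability of $\h{\G}$ by \cite[Theorem 5.1]{C}; the multiplier representation then identifies $s_\al\circ r_\al$ with convolution by $f_\al\in\LOQ$, $\norm{f_\al}\leq 1$, with $f_\al\star x\to x$ weak*, and the convexity argument is run in the predual $\LOQ$, where it does apply.

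Second, in $(6)\Rightarrow(1)$ you correctly flag the norm control as the delicate point but do not resolve it, and that is exactly where the substance lies. After Lemma \ref{l:nuc} reduces the matrix entries of $r_\al$ and $s_\al$ to right multipliers (note the factorization is through $M_{n_\al}(\LIQ\oplus_\infty\C)$, not $M_{n_\al}(\LIQ)$), one obtains $s_\al\circ r_\al(x)=\nu_\al\star x$ for some $\nu_\al\in M(\G)$; but the needed inequality $\norm{\nu_\al}_{M(\G)}\leq\norm{s_\al\circ r_\al}_{cb}\leq 1$ is not automatic, since the contractive map $M(\G)\to\McbQr$ goes the wrong way. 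The paper gets it from the \emph{isometric} identification $\McbQr\cong C_0^u(\G)^*$, which holds precisely because $\h{\G}$ has already been shown amenable (\cite[Theorem 7.2]{C2} combined with $\McbQl\cong\McbQr$), together with $\mc{CB}_{\LOQ}(C_0(\G),\LIQ)\cong\McbQr$ from \cite[Proposition 4.1]{JNR}. Your alternative suggestion that nuclearity of $i_{C_0(\G)}$ ``nuclearly approximates'' the trivial module is too vague to substitute for this quantitative input. With these two repairs — replacing $(2)\Rightarrow(4)$ by a direct proof of $(2)\Rightarrow(1)$, and inserting the isometric multiplier identification — your outline coincides with the paper's proof.
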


\begin{proof} $(1)\Rightarrow(2)$ and $(1)\Rightarrow(3)$ follow from Proposition \ref{p:nuc}. 

$(2)\Rightarrow (1)$: Suppose that $\LIQ$ is semi-discrete in $\modLOQ_1$. Then there exist diagrams of normal morphisms in $\modLOQ_1$
\begin{equation*}
\begin{tikzcd}
                     &M_{n_{\alpha}}(\LIQ\oplus_\infty\C)\arrow[dr, "s_\alpha"]\\
\LIQ \arrow[ru, "r_\alpha"] \arrow[rr, "\id"] & &\LIQ,
\end{tikzcd}
\end{equation*}
which approximately commute in the point-weak* topology. Also, by (the right module version of) Proposition \ref{p:semiinj}, $\LIQ$ is injective in $\modLOQ_1$, so that $\h{\G}$ is amenable \cite[Theorem 5.1]{C}. In turn, amenability of $\h{\G}$ implies $\McbQl\cong C_0^u(\G)^*$ canonically and completely isometrically  \cite[Theorem 7.2]{C2}. Combined with the canonical isometric isomorphism $\McbQl\cong\McbQr$ between left and right multipliers (see, e.g., \cite[4.19]{HNR2}), we have $\McbQr\cong C_0^u(\G)^*$ isometrically. Note that
$$r_\alpha\in\mc{CB}^{\sigma}_{\LOQ}(\LIQ,M_{n_{\alpha}}(\LIQ\oplus_\infty\C))=M_{n_{\alpha}}(\mc{CB}^{\sigma}_{\LOQ}(\LIQ,\LIQ\oplus_\infty\C)),$$
so we may write $r_\alpha=[r^\alpha_{i,j}]$, with each $r^\alpha_{i,j}\in \mc{CB}^{\sigma}_{\LOQ}(\LIQ,\LIQ\oplus_\infty\C)$. Let $p_1$ and $p_2$ be the canonical projections from $\LIQ\oplus_\infty\C$ onto its first and second summands. By the isomorphism (\ref{e:Theta}), for each $i,j$ we have
$$p_1\circ r^\alpha_{i,j}\in\mc{CB}^{\sigma}_{\LOQ}(\LIQ,\LIQ)\cong\McbQr\cong C_0^u(\G)^*,$$
(where we used the fact that $p_1$ is a normal morphism) and $p_2\circ r^\alpha_{i,j}\in(\LIQ)_*=\LOQ$, so there exist $\mu_{i,j}^\alpha\in C_0^u(\G)^*$ and $f_{i,j}^\alpha\in\LOQ$ for which
$$r_{i,j}^\alpha(x)=(\Theta^r(\mu_{i,j}^\alpha)(x),\la x,f_{i,j}^\alpha\ra), \ \ \ x\in\LIQ.$$
The $\LOQ$-module property implies that
\begin{align*}(\Theta^r(\mu_{i,j}^\alpha)(x)\star g,\la f_{i,j}^\alpha\star x,g\ra)&=(\Theta^r(\mu_{i,j}^\alpha)(x\star g),\la x\star g,f^\alpha_{i,j}\ra)\\
&=r_{i,j}^\alpha(x\star g)\\
&=r_{i,j}^\alpha(x)\star g\\
&=(\Theta^r(\mu_{i,j}^\alpha)(x),\la x,f_{i,j}^\alpha\ra)\star g\\
&=(\Theta^r(\mu_{i,j}^\alpha)(x)\star g,\la \Theta^r(\mu_{i,j}^\alpha)(x),g\ra)
\end{align*}
for all $g\in\LOQ$. Hence, 
$$\Theta^r(\mu_{i,j}^\alpha)(x)=f_{i,j}^\alpha\star x=\Theta^r(f_{i,j}^\al)(x), \ \ \ x\in\LIQ,$$
which forces $\mu_{i,j}^\alpha=f_{i,j}^\alpha\in \LOQ$ by injectivity of $\Theta^r$. In summary,
$$r_\alpha(x)=[(f_{i,j}^\alpha\star x,\la f_{i,j}^\al,x\ra)],$$
with $f_{i,j}^\alpha\in\LOQ$.

Next, observe that $s_\alpha$ satisfies 
$$s_\alpha([y_{i,j}])=\sum_{i,j=1}^{n_\alpha}s^{\al}_{i,j}(y_{i,j}), \ \ \ [y_{i,j}]\in M_{n_{\alpha}}(\LIQ\oplus_\infty\C),$$
for some collection of morphisms $s^\al_{i,j}:\LIQ\oplus_\infty\C\rightarrow\LIQ$. Since $\LIQ$ is faithful, Lemma \ref{l:nuc} implies that $s^\al_{i,j}=\psi^\al_{i,j}\circ p_1$ for normal morphisms $\psi^\al_{i,j}\in\Hom(\LIQ)$. Thus, $\psi^\al_{i,j}=\Theta^r(\nu_{i,j}^\alpha)$ for some $\nu_{i,j}^\al\in C_0^u(\G)^*$. The composition $s_\al\circ r_\al$ then satisfies
$$s_\al\circ r_\al(x)=\sum_{i,j=1}^{n_\al} \Theta^r(\nu_{i,j}^\alpha)(f_{i,j}^\al\star x)=\sum_{i,j=1}^{n_\al} (\nu_{i,j}^\al\star f_{i,j}^\al)\star x=f_\al\star x, \ \ \ x\in \LIQ$$
where $f_\al=\sum_{i,j=1}^{n_\al}\nu_{i,j}^\al\star f_{i,j}^\al\in \LOQ$ as $\LOQ$ is an ideal in $C_0^u(\G)^*$. Moreover, the isometry $\McbQr\cong C_0^u(\G)^*$ implies
$$\norm{f_{\al}}_{\LOQ}=\norm{\Theta^r(f_\alpha)}_{cb}=\norm{s_\al\circ r_\al}_{cb}\leq 1,$$
for all $\alpha$. Since $f_\al\star x\rightarrow x$ weak* for all $x\in \LIQ$, the standard convexity argument yields a bounded right approximate identity for $\LOQ$, which entails the co-amenability of $\G$.

$(3)\Rightarrow(1)$ follows similarly using left multipliers.

$(1)\Rightarrow(4)$ and $(1)\Rightarrow (5)$ follow from Proposition \ref{p:nuc}.

$(4)\Rightarrow (1)$ is a $C^*$-analogue of $(2)\Rightarrow(1)$, using the $\LOQ$-WEP of $C_0(\G)$ in lieu of $\LOQ$-injectivity of $\LIQ$ to utilize the multiplier representation. If $\RUC$ is nuclear in $\modLOQ_1$ then there exist diagrams of morphisms in $\modLOQ_1$
\begin{equation*}
\begin{tikzcd}
                     &M_{n_{\alpha}}(\LIQ\oplus_\infty\C)\arrow[dr, "s_\alpha"]\\
\RUC \arrow[ru, "r_\alpha"] \arrow[rr, "\id"] & &\RUC,
\end{tikzcd}
\end{equation*}
which approximately commute in the point-norm topology. Since 
$$C_0(\G)\subseteq\RUC\subseteq M(C_0(\G))\subseteq C_0(\G)^{**}$$
(see \cite{Runde}), restricting the morphisms $r_\alpha$ to $C_0(\G)$ shows that the inclusion $C_0(\G)\hookrightarrow C_0(\G)^{**}$ is nuclear. Thus, $C_0(\G)$ has the $\LOQ$-WEP by Proposition \ref{p:nucWEP} and therefore $\h{\G}$ is amenable by Proposition \ref{p:Wepamen}. As above, combining \cite[Theorem 7.2]{C2} with the canonical isometry $\McbQl\cong\McbQr$ we see that $\McbQr\cong C_0^u(\G)^*$ isometrically. Note that
$$r_\alpha|_{C_0(\G)}\in\mc{CB}_{\LOQ}(C_0(\G),M_{n_{\alpha}}(\LIQ\oplus_\infty\C))=M_{n_{\alpha}}(\mc{CB}_{\LOQ}(C_0(\G),\LIQ\oplus_\infty\C)),$$
so we may write $r_\alpha=[r^\alpha_{i,j}]$, with each $r^\alpha_{i,j}\in \mc{CB}_{\LOQ}(C_0(\G),\LIQ\oplus_\infty\C)$. Letting $p_1$ and $p_2$ be the canonical projections from $\LIQ\oplus_\infty\C$ onto its first and second summands, for each $i,j$ it follows (as above) that
$$p_1\circ r^\alpha_{i,j}\in\mc{CB}_{\LOQ}(C_0(\G),\LIQ)\cong\McbQr\cong C_0^u(\G)^*,$$
where the first equality uses \cite[Proposition 4.1]{JNR}. Also, $p_2\circ r^\alpha_{i,j}|_{C_0(\G)}\in C_0(\G)^*=M(\G)$, so there exist $\mu_{i,j}^\alpha\in C_0^u(\G)^*$ and $\nu_{i,j}^\alpha\in M(\G)$ for which
$$r_{i,j}^\alpha(x)=(\Theta^r(\mu_{i,j}^\alpha)(x),\la x,\nu_{i,j}^\alpha\ra), \ \ \ x\in C_0(\G).$$
As in the proof of $(2)\Rightarrow(1)$, the $\LOQ$-module property implies that
$$(\Theta^r(\mu_{i,j}^\alpha)(x)\star g,\la \nu_{i,j}^\alpha\star x,g\ra)
=(\Theta^r(\mu_{i,j}^\alpha)(x)\star g,\la \Theta^r(\mu_{i,j}^\alpha)(x),g\ra)$$
for all $g\in\LOQ$, which forces $\Theta^r(\mu_{i,j}^\alpha)(x)=\nu_{i,j}^\alpha\star x$ for all $x\in C_0(\G)$ and therefore $\mu_{i,j}^\alpha=\nu_{i,j}^\alpha\in M(\G)$ by injectivity of $\Theta^r$. In summary,
$$r_\alpha(x)=[(\nu_{i,j}^\alpha\star x,\la \nu_{i,j}^\al,x\ra)],$$
with $\nu_{i,j}^\alpha\in M(\G)$.

Next, observe that $s_\alpha$ satisfies 
$$s_\alpha([y_{i,j}])=\sum_{i,j=1}^{n_\alpha}s^{\al}_{i,j}(y_{i,j}), \ \ \ [y_{i,j}]\in M_{n_{\alpha}}(\LIQ\oplus_\infty\C),$$
for some collection of morphisms $s^\al_{i,j}:\LIQ\oplus_\infty\C\rightarrow\LIQ$. Since $\LIQ$ is faithful, Lemma \ref{l:nuc} implies that $s^\al_{i,j}=\psi^\al_{i,j}\circ p_1$ for morphisms $\psi^\al_{i,j}\in\Hom(\LIQ)$. Thus, $\psi^\al_{i,j}|_{C_0(\G)}=\Theta^r(\mu_{i,j}^\alpha)$ for some $\mu_{i,j}^\al\in C_0^u(\G)^*$. The composition $s_\al\circ r_\al$ then satisfies
$$s_\al\circ r_\al(x)=\sum_{i,j=1}^{n_\al} \Theta^r(\mu_{i,j}^\alpha)(\nu_{i,j}^\al\star x)=\sum_{i,j=1}^{n_\al} (\mu_{i,j}^\al\star \nu_{i,j}^\al)\star x=\nu_\al\star x, \ \ \ x\in C_0(\G),$$
where $\nu_\al=\sum_{i,j=1}^{n_\al}\mu_{i,j}^\al\star \nu_{i,j}^\al\in M(\G)$. Moreover,
$$\norm{\nu_{\al}}_{M(\G)}=\norm{\Theta^r(\nu_\alpha)}_{cb}=\norm{s_\al\circ r_\al}_{cb}\leq 1,$$
for all $\alpha$. Since $\nu_\al\star x\rightarrow x$ in norm for all $x\in C_0(\G)$, the net $(\nu_\alpha)$ clusters to a right identity for $M(\G)$, which entails the co-amenability of $\G$ (see the end of the proof of \cite[Theorem 5.12]{C} for details).

$(5)\Rightarrow(1)$ is proved similarly as $(4)\Rightarrow (1)$, using left multipliers.

$(1)\Rightarrow(6)$ follows from the implication $(1)\Rightarrow (4)$ and the fact that $(4)\Rightarrow(6)$, as noted above.

$(6)\Rightarrow(1)$: If $C_0(\G)\hookrightarrow C_0(\G)^{**}$ is nuclear in $\modLOQ_1$ then there exist diagrams of morphisms in $\modLOQ_1$
\begin{equation*}
\begin{tikzcd}
                     &M_{n_{\alpha}}(\LIQ\oplus_\infty\C)\arrow[dr, "s_\alpha"]\\
C_0(\G) \arrow[ru, "r_\alpha"] \arrow[rr, "\id"] & &C_0(\G)^{**},
\end{tikzcd}
\end{equation*}
that approximately commute in the point norm topology. Then $C_0(\G)$ has the $\LOQ$-WEP by Proposition \ref{p:nucWEP} and therefore $\h{\G}$ is amenable by Proposition \ref{p:Wepamen}. Composing $s_\alpha$ with the canonical morphism $\pi:C_0(\G)^{**}\twoheadrightarrow\LIQ$, the resulting diagrams
\begin{equation*}
\begin{tikzcd}
                     &M_{n_{\alpha}}(\LIQ\oplus_\infty\C)\arrow[dr, "\pi\circ s_\alpha"]\\
C_0(\G) \arrow[ru, "r_\alpha"] \arrow[rr, "\id"] & &\LIQ,
\end{tikzcd}
\end{equation*}
approximately commute in the point norm topology. An argument similar to the proof of $(4)\Rightarrow (1)$ establishes the co-amenability of $\G$.

Finally, the equivalence $(1)\Leftrightarrow(7)$ is similar to $(1)\Leftrightarrow(6)$.

\end{proof}

\begin{remark} Since $\h{\G}$ is amenable if and only if $\LIQ$ is injective in $\modLOQ_1$ \cite[Theorem 5.1]{C}, Theorem \ref{t:coamenequiv} shows that the long-standing open problem on the equivalence between co-amenability of a locally compact quantum group $\G$ and amenability of $\h{\G}$ \cite{Voi} is the problem of equivalence between injectivity and semi-discreteness of $\LIQ$ in $\mathbf{mod}\hskip2pt\LOQ_1$.
\end{remark}

\begin{remark} It is natural to wonder whether $\LOQ$-nuclearity of $C_0(\G)$ is related to co-amenability of $\G$. When $\G$ is compact, meaning $C_0(\G)$ is unital, this is indeed the case as $\LUC=\RUC=C_0(\G)$. 

If $\G$ is not compact and $C_0(\G)$ is $\LOQ$-nuclear, then $\h{\G}$ is amenable (by Proposition \ref{p:Wepamen}) so by \cite[Proposition 5.5]{C} any $\vphi\in\Hom(\LIQ,\LIQ)$ decomposes into a sum of completely positive morphisms. However, there is no non-zero completely positive $\LOQ$-morphism from $\LIQ$ into $C_0(\G)$, since the image of $1$ in $C_0(\G)$ would necessarily be a fixed point for the $\LOQ$-action on $\LIQ$, forcing it to lie in $\C1$. It follows from Lemma \ref{l:nuc} that there is no non-zero morphism from any $M_n(\LIQ\oplus_\infty \C)$ into $C_0(\G)$, which contradicts the $\LOQ$-nuclearity of $C_0(\G)$. Thus, $C_0(\G)$ is not $\LOQ$-nuclear for non-compact $\G$.
\end{remark}
%\begin{remark} Injectivity of $\LIQ$ in $\modLOQ_1$ implies injectivity in $\Op_1$. This follows from the inclusion $\LIQ\subseteq\BLTQ$ into an $\LOQ$-module which is injective as an operator space. One might wonder if $\LOQ$-nuclearity of $\LUC$ implies nuclearity as a $C^*$-algebra. This is false: take any compact group $G$ which is not virtually abelian (e.g. $SU(2)$). Then $\mathrm{UCB}(\h{G})=\mathrm{LUC}(\G_s)$ is $A(G)$-nuclear by amenability of $G$. However, compactness implies that $\mathrm{UCB}(\h{G})=VN(G)$, so nuclearity would force $VN(G)$ to be subhomogeneous, that is, $G$ would be vitrually abelian \cite{Moore}.
%\end{remark}

\begin{remark} There are notions of relative nuclearity/semi-discreteness for inclusions of $C^*$-/von Neumann algebras using the language of correspondences \cite{AD,Has,Popa}. These notions may be viewed as analogues of nuclearity and semi-discreteness in categories of Hilbert modules over the respective algebras. For instance, if $B\subseteq A$ is a unital inclusion of $C^*$-algebras with conditional expectation $A\rightarrow B$, then the pair $(B,A)$ is strongly relatively nuclear in the sense of \cite{Has} if, roughly speaking, the identity map on $A$ can be approximated by certain finite sums of $B$-module maps which factor through $M_n(B)$ in a manner which utilizes the canonical Hilbert $B$-module structure on $A$ induced from the conditional expectation. Viewing $A\in B\hskip2pt\mathbf{mod}$, this module notion of nuclearity is different from that in Definition \ref{d:nuc}, which takes place in a different category, and in which the identity approximately factors through $M_n(B^*)$. For a concrete distinction, the pair $(A,A)$ is always strongly relatively nuclear for any unital $C^*$-algebra $A$. If a $C^*$-algebra $A$ is nuclear in $\Amod_1$ then it necessarily has Lance's weak expectation property by Proposition \ref{p:nucWEP} and \cite[Corollary 3.10]{BC2}. Similar remarks apply to relative semi-discreteness in the sense of \cite{AD}.
\end{remark}

\section{Equivalence of injectivity and semi-discreteness for crossed products}\label{s:dyn}

In this section we establish the equivalence between $A(G)$-injectivity of $G\bar{\ltimes} M$, $A(G)$-semi-discreteness of $G\bar{\ltimes} M$, and amenability of $W^*$-dynamical systems $(M,G,\alpha)$ with $M$ injective. The proof relies on a recent Herz-Schur multiplier characterization of amenable actions \cite[Theorem 3.13]{BC} together with a generalized construction from (the proof of) \cite[Theorem 3.5]{CT} to produce a specific $A(G)$-module left inverse to the dual co-action which lies in the point weak* closure of \textit{normal $A(G)$-module maps}. We begin with the necessary tools from dynamical systems. 

%We therefore begin with the necessary tools from Hilbert modules associated to dynamical systems, adopting the convention of Lance in \cite{Lance}: unless otherwise stated, all Hilbert $C^*$-modules are right modules and inner products are conjugate linear in the first variable.

A $W^*$-dynamical system $(M,G,\alpha)$ consists of a von Neumann algebra $M$ endowed with a homomorphism $\alpha:G\rightarrow\mathrm{Aut}(M)$ of a locally compact group $G$ such that for each $x\in M$, the map $G\ni s\rightarrow \alpha_s(x)\in M$ is weak* continuous. We let $M_c$ denote the unital $C^*$-subalgebra consisting of those $x\in M$ for which $s\mapsto \alpha_s(x)$ is norm continuous. By \cite[Lemma 7.5.1]{Ped}, $M_c$ is weak* dense in $M$.

Every $W^*$-dynamical system induces a normal $G$-equivariant injective $*$-homomorphism $\alpha:M\rightarrow\LI\oten M$ via
$$\alpha(x)(s)=\alpha_{s^{-1}}(x), \ \ \ x\in M, \ s\in G,$$
and a corresponding right $\LO$-module structure on $M$ \cite[18.6]{Str}. The crossed product of $M$ by $G$, denoted $G\bar{\ltimes}M$, is the von Neumann subalgebra of $\BLT\oten M$ generated by $\alpha(M)$ and $VN(G)\ten 1$. 

The system $(M,G,\alpha)$ admits a dual co-action 
$$\wh{\alpha}:G\bar{\ltimes}M\rightarrow VN(G)\oten (G\bar{\ltimes}M)$$
of $VN(G)$ on the crossed product, given by
\begin{equation}\label{e:coaction}\h{\alpha}(X)=(\h{W}^*\ten 1)(1\ten X)(\h{W}\ten 1), \ \ \ X\in G\bar{\ltimes}M,\end{equation}
where $\h{W}$ is the left fundamental unitary of $VN(G)$. On the generators we have $\h{\alpha}(\h{x}\ten 1)=(\h{W}^*(1\ten\hat{x})\h{W})\ten 1$, $\hat{x}\in VN(G)$ and $\h{\alpha}(\alpha(x))=1\ten\alpha(x)$, $x\in M$. This co-action yields a canonical right operator $A(G)$-module structure on the crossed product via
$$X\cdot u=(u\ten\id)\h{\alpha}(X), \ \ \ X\in G\bar{\ltimes}M, \ u\in A(G).$$

A $C^*$-dynamical system $(A,G,\alpha)$ consists of a $C^*$-algebra endowed with a homomorphism $\alpha:G\rightarrow\mathrm{Aut}( A)$ of a locally compact group $G$ such that for each $a\in A$, the map $G\ni s\mapsto\alpha_s(a)\in A$ is norm continuous. 

A covariant representation $(\pi, \sigma)$ of $( A,G,\alpha)$ consists of a representation $\pi: A\rightarrow\BH$ and a unitary representation $\sigma:G\rightarrow\BH$ such that $\pi(\alpha_s(a))=\sigma_s\pi(a)\sigma_{s^{-1}}$ for all $a \in A$, $s\in G$. Given a covariant representation $(\pi,\sigma)$, we let
$$(\pi \times \sigma)(f) = \int_G \pi(f(t)) \sigma_t \, dt, \ \ \ f\in C_c(G, A).$$
The full crossed product $G\ltimes_f A$ is the completion of $C_c(G, A)$ in the norm
$$\|f \| = \sup_{(\pi, \sigma)} \| (\pi \times \sigma)(f)\|$$
where the $\sup$ is taken over  all covariant representations $(\pi, \sigma)$ of $(A,G,\alpha)$.

Let $A\subseteq\mc{B}(H)$ be a faithful non-degenerate representation of $ A$. Then $(\alpha,\lm\ten 1)$ is a covariant representation on $L^2(G,H)$, where 
$$\alpha(a)\xi(t)=\alpha_{t^{-1}}(a)\xi(t), \ \ \ (\lm\ten 1)(s)\xi(t)=\xi(s^{-1}t), \ \ \ \xi\in L^2(G,H).$$
The reduced crossed product $G\ltimes A$ is defined to be the norm closure of $(\alpha\times(\lm\ten 1))(C_c(G,A))$. This definition is independent of the faithful non-degenerate representation $A\subseteq\mc{B}(H)$. We often abbreviate $\alpha\times(\lm\ten 1)$ as $\alpha\times\lm$. 

Analogous to the group setting, dual spaces of crossed products can be identified with certain $ A^*$-valued functions on $G$. We review aspects of this theory below and refer the reader to \cite[Chapters 7.6, 7.7]{Ped} for details. 

For each $C^*$-dynamical system $( A,G,\alpha)$ there is a universal covariant representation $(\pi,\sigma)$ such that 
$$G\ltimes_f A\subseteq C^*(\pi( A)\cup \sigma(G))\subseteq M(G\ltimes_f A).$$
Each functional $\vphi\in (G\ltimes_f A)^*$ then defines a function $u:G\rightarrow A^*$ by
\begin{equation} \label{eqn: FS relation} \la u(s),a\ra=\vphi(\pi(a)\sigma_s), \ \ \ a\in A, \ s\in G.\end{equation}
Let $B(G\ltimes_f A)$ denote the resulting space of $ A^*$-valued functions on $G$. An element $u\in B(G\ltimes_f A)$ is \textit{positive definite} if it arises from a positive linear functional $\vphi$ as above. We let $A(G\ltimes_f A)$ denote the subspace of $B(G\ltimes_f A)$ whose associated functionals $\vphi$ are of the form
$$\vphi(x)=\sum_{n=1}^\infty\la\xi_n, (\alpha\times\lm)(x)\eta_n\ra, \ \ \ x\in G\ltimes_f  A,$$
for sequences $(\xi_n)$ and $(\eta_n)$ in $L^2(G,H)$ with $\sum_{n=1}^\infty \norm{\xi_n}^2<\infty$ and $\sum_{n=1}^\infty\norm{\eta_n}^2<\infty$. Then $A(G\ltimes_f A)$ is a norm closed subspace of $(G\ltimes_f A)^*$ which can be identified with $((G\ltimes A)'')_*$.

Let $(A,G,\alpha)$ be a $C^*$-dynamical system. We let $L^2(G,A)$ be the right Hilbert $A$-module given by the completion of $C_c(G,A)$ under $\norm{\xi}=\norm{\la\xi,\xi\ra}_A^{1/2}$, where
$$\la\xi,\zeta\ra=\int_G\xi(s)^*\zeta(s) \ ds, \ \ \ \xi\cdot a(s)=\xi(s)a, \ \ \ \xi,\zeta\in C_c(G,A), \ a\in A.$$
We let $\lm_s\ten\alpha_t\in \mc{B}(L^2(G,A))$ denote the isometry
$$(\lm_s\ten\alpha_t)\xi(r)=\alpha_t(\xi(s^{-1}r)), \ \ \ \xi\in C_c(G,A), \ s,t\in G.$$
By left invariance of the Haar measure and continuity of the action it follows that 
$$\la(\lm_s\ten\alpha_t)\xi,(\lm_s\ten\alpha_t)\zeta\ra=\alpha_t(\la\xi,\zeta\ra), \ \ \ \xi,\zeta\in L^2(G,A), \ s,t\in G.$$

\begin{prop}\label{p:posdef} Let $(M,G,\alpha)$ be a $W^*$-dynamical system, and let $\xi\in C_c(G,Z(M)_c)$. The function $h:G\times G\ni(s,t)\mapsto \la\xi,(\lm_s\ten\alpha_t)\xi\ra$ defines a normal completely positive $A(G)$-module map $\Phi_h:VN(G)\oten( G\bar{\ltimes} M)\rightarrow G\bar{\ltimes} M$ satisfying $\norm{\Phi_h}_{cb}=\norm{\la\xi,\xi\ra}$ and 
$$(\Phi_h)_*:A(G\ltimes_f M_c)\ni u\mapsto (1\ten u)\cdot h\in A((G\times G)\ltimes_f (\C\ten M_c)).$$
\end{prop}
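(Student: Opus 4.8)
The plan is to realise $\Phi_h$ as an explicit Stinespring-type dilation built directly from $\xi$, which makes its normality, complete positivity and norm transparent, and then to verify separately that it takes values in $G\bar{\ltimes} M$, that it is an $A(G)$-module map, and that its pre-adjoint has the asserted form. I would first fix a standard form $M\subseteq\mc{B}(H)$ together with the canonical (strongly continuous) unitary implementation $w:G\to\mc{U}(H)$ of $\alpha$, so that $\alpha_s=\Ad(w_s)$, $G\bar{\ltimes} M\subseteq\mc{B}(L^2(G,H))$ and $VN(G)\oten(G\bar{\ltimes} M)\subseteq\mc{B}(L^2(G)\ten L^2(G,H))$. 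Writing $\mathbf{w}\in\mc{B}(L^2(G,H))$ for the pointwise unitary $(\mathbf{w}\zeta)(s)=w_s\zeta(s)$, one has $\alpha(z)=\mathbf{w}^*(1\ten z)\mathbf{w}$ for $z\in M$.

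Next I would introduce $S_\xi:L^2(G,H)\to L^2(G)\ten L^2(G,H)$, $(S_\xi\zeta)(s_1,s_2)=\xi(s_1)\zeta(s_2)$, which is bounded with $S_\xi^*S_\xi=1\ten\la\xi,\xi\ra$, and set
$$V:=(1_{L^2(G)}\ten\mathbf{w}^*)\,S_\xi\,\mathbf{w},\qquad \Phi_h(Y):=V^*YV\ \ \big(Y\in VN(G)\oten(G\bar{\ltimes} M)\big).$$
Then $\Phi_h$ is completely positive and normal, being a conjugation by a fixed bounded operator, and $V^*V=\mathbf{w}^*(1\ten\la\xi,\xi\ra)\mathbf{w}=\alpha(\la\xi,\xi\ra)$, so $\norm{\Phi_h}_{cb}=\norm{\Phi_h(1)}=\norm{\alpha(\la\xi,\xi\ra)}=\norm{\la\xi,\xi\ra}$ as $\alpha$ is isometric. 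To see that $\Phi_h$ maps into $G\bar{\ltimes} M=(G\bar{\ltimes} M)''$, it is enough to check $VZ'=(1\ten Z')V$ for every $Z'\in(G\bar{\ltimes} M)'$; conjugating by $\mathbf{w}$, this reduces to $S_\xi W'=(1\ten W')S_\xi$ for $W'$ in $(\mathbf{w}(G\bar{\ltimes} M)\mathbf{w}^*)'$, which holds because $\mathbf{w}(G\bar{\ltimes} M)\mathbf{w}^*\supseteq\mathbf{w}\alpha(M)\mathbf{w}^*=1\ten M$ forces this commutant into $\mc{B}(L^2(G))\oten M'$, whose elements commute with the operators $1\ten\xi(s_1)\in1\ten M$ occurring in $S_\xi$. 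A short computation with the above formulas then gives, for $t\in G$ and $Z\in G\bar{\ltimes} M$,
$$\Phi_h(Z\text{ conjugated})=\int_G\alpha\big(\xi(r)^*\big)\,Z\,\alpha\big(\xi(t^{-1}r)\big)\,dr\quad\text{for }Y=\lm(t)\ten Z,$$
a norm-convergent Bochner integral since $\xi\in C_c(G,Z(M)_c)$.

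It then remains to treat the $A(G)$-module property and the pre-adjoint. Here $A(G)$ acts on $G\bar{\ltimes} M$ by $X\cdot u=(u\ten\id)\h{\alpha}(X)$ and on $VN(G)\oten(G\bar{\ltimes} M)$ through the dual co-action on the second leg, $Y\cdot u=(\id\ten u\ten\id)(\id\ten\h{\alpha})(Y)$. Since $\Phi_h$ and both module actions are normal and $\mathrm{span}\{\lm(t)\ten Z:t\in G,\ Z\in G\bar{\ltimes} M\}$ is weak$^*$-dense in $VN(G)\oten(G\bar{\ltimes} M)$, it suffices to check $\Phi_h(Y\cdot u)=\Phi_h(Y)\cdot u$ on such $Y$; using $\h{\alpha}(\alpha(a))=1\ten\alpha(a)$ and the slice identity $(u\ten\id)\big((1\ten A)B(1\ten C)\big)=A\,(u\ten\id)(B)\,C$, both sides equal $\int_G\alpha(\xi(r)^*)\,(Z\cdot u)\,\alpha(\xi(t^{-1}r))\,dr$. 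For the pre-adjoint, recall (as discussed above) that $A(G\ltimes_f M_c)\cong(G\bar{\ltimes} M)_*$ and $A((G\times G)\ltimes_f(\C\ten M_c))\cong(VN(G)\oten(G\bar{\ltimes} M))_*$, because $M_c$ is weak$^*$-dense in $M$ and hence the double commutants of the reduced crossed products are $G\bar{\ltimes} M$ and $VN(G)\oten(G\bar{\ltimes} M)$; thus $(\Phi_h)_*$ is simply the pre-adjoint of $\Phi_h$. Pairing $(\Phi_h)_*(u)$ with the elements $\lm(s)\ten\alpha(a)(\lm(t)\ten1)$ (which via (\ref{eqn: FS relation}) recover $(\Phi_h)_*(u)$ as an $M_c^*$-valued function on $G\times G$), and using the crossed-product relation $(\lm(t)\ten1)\alpha(b)=\alpha(\alpha_t(b))(\lm(t)\ten1)$ together with the centrality of $\xi$ (to pull $a$ out of the $r$-integral), one finds
$$\Phi_h\big(\lm(s)\ten\alpha(a)(\lm(t)\ten1)\big)=\alpha\big(a\,h(s,t)\big)\,(\lm(t)\ten1),$$
whence $\la(\Phi_h)_*(u)(s,t),a\ra=\la u,\alpha(a\,h(s,t))(\lm(t)\ten1)\ra=\la u(t),a\,h(s,t)\ra$; that is, $(\Phi_h)_*(u)$ is the $M_c^*$-valued function $(s,t)\mapsto h(s,t)\cdot u(t)$ on $G\times G$, which is the meaning of $(1\ten u)\cdot h$.

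The step I expect to be the main obstacle is pinning down the operator $V$ and verifying its two structural properties — that $\Phi_h=V^*(\cdot)V$ genuinely maps into $G\bar{\ltimes} M$ (the commutant computation) and that $V^*V=\alpha(\la\xi,\xi\ra)$ (which controls the norm) — together with matching $V^*YV$ against the coefficient function $\la\xi,(\lm_s\ten\alpha_t)\xi\ra$; the bookkeeping across the three Hilbert-space legs and the two group actions (the genuine $\alpha$-action and the co-acting one) is where the care lies, and the centrality of $\xi$ is used exactly in the penultimate display, to move $a$ past the inner product of $\xi$ with its translate.
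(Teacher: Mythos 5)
Your proof is correct, but it takes a genuinely different route from the paper's. The paper never leaves the Fourier--Stieltjes picture: it verifies by a direct Schur-product-type computation with the Hilbert-module inner product (using the centrality of $\xi$ exactly where you do) that $(1\ten u)\cdot h$ is positive definite whenever $u\in B(G\ltimes_f M_c)^+$ is, invokes \cite[Proposition 7.6.8]{Ped} and its matrix amplifications to obtain a completely positive map $u\mapsto(1\ten u)\cdot h$ on the preduals, checks that compact supports are preserved so the map restricts to $A(G\ltimes_f M_c)\rightarrow A((G\times G)\ltimes_f(\C\ten M_c))$, and then \emph{defines} $\Phi_h$ as the adjoint of this map --- so normality and the pre-adjoint formula are built in, and the norm is read off from $\Phi_h(1)=\alpha(h(e,e))$. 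You instead exhibit $\Phi_h=V^*(\cdot)V$ for an explicit $V$, which makes normality, complete positivity and $\norm{\Phi_h}_{cb}=\norm{V^*V}=\norm{\la\xi,\xi\ra}$ immediate, at the cost of (i) invoking the canonical strongly continuous unitary implementation of $\alpha$ (available in standard form; alternatively one can bypass $\mathbf{w}$ entirely by defining $(V\zeta)(s_1,s_2)=\alpha_{s_2^{-1}}(\xi(s_1))\zeta(s_2)$ directly), and (ii) the commutant computation showing the range lies in $(G\bar{\ltimes}M)''=G\bar{\ltimes}M$; both steps are sound, and your identity $S_\xi^*(\lm(t)\ten W)S_\xi=\int_G(1\ten\xi(r)^*)W(1\ten\xi(t^{-1}r))\,dr$ does reproduce the paper's $\Phi_h$ on the generators $\lm(s)\ten\alpha(a)(\lm(t)\ten 1)$, as one sees by comparing with $\la(1\ten u)\cdot h(s,t),a\ra=\la u(t),h(s,t)a\ra$. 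One point in your favour worth stressing: the $A(G)$-module structure on $VN(G)\oten(G\bar{\ltimes}M)$ for which $\Phi_h$ is a module map is, as you specify, the one acting through the second (crossed-product) leg --- this is forced by the formula $(\Phi_h)_*(u)=(1\ten u)\cdot h$, in which $u$ enters through the second variable --- so your generator-level verification via $\h{\alpha}(\alpha(b)Z\alpha(c))=(1\ten\alpha(b))\h{\alpha}(Z)(1\ten\alpha(c))$ is checking exactly the right statement.
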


\begin{proof} We first construct a map at the level of the Fourier--Stieltjes space $B(G\ltimes_f M_c)$. To that end, fix $u\in B(G\ltimes_f M_c)^+$. We show that $(1\ten u)\cdot h\in B((G\times G)\ltimes_f (\C\ten M_c))^+$, where $(\C\ten M_c,G\times G,\tr\ten\alpha)$ is the tensor product system with the trivial action on $\C$, and the operation $\cdot$ is the pointwise action of $M_c$ on its dual. Given $(s_1,t_1),...,(s_n,t_n)\in G\times G$ and $1\ten x_1,...,1\ten x_n$ in $\C\ten M_c$, we have
\begin{align*}&\sum_{j,k=1}^n\la ((1\ten u)\cdot h)(s_j^{-1}s_k,t_j^{-1}t_k),(\tr\ten\alpha)_{(s_j^{-1},t_j^{-1})}(1\ten x_j^*x_k)\ra\\
&=\sum_{j,k=1}^n\la  u(t_j^{-1}t_k)\cdot h(s_j^{-1}s_k,t_j^{-1}t_k),\alpha_{t_j^{-1}}(x_j^*x_k)\ra\\
&=\sum_{j,k=1}^n\la  u(t_j^{-1}t_k),h(s_j^{-1}s_k,t_j^{-1}t_k)\alpha_{t_j^{-1}}(x_j^*x_k)\ra\\
&=\sum_{j,k=1}^n\la  u(t_j^{-1}t_k),\la\xi,\lm_{s_j^{-1}s_k}\ten\alpha_{t_j^{-1}t_k}\xi\ra\alpha_{t_j^{-1}}(x_j^*x_k)\ra\\
&=\sum_{j,k=1}^n\la  u(t_j^{-1}t_k),\alpha_{t_j^{-1}}(\la\lm_{s_j}\ten\alpha_{t_j}\xi,\lm_{s_k}\ten\alpha_{t_k}\xi\ra x_j^*x_k)\ra.
\end{align*}
Since $\xi$ takes values in $Z(M)_c$, it follows that
$$\la\lm_{s_j}\ten\alpha_{t_j}\xi,\lm_{s_k}\ten\alpha_{t_k}\xi\ra x_j^*x_k=\int_G(\lm_{s_j}\ten\alpha_{t_j}\xi(r))^*x_j^*x_k(\lm_{s_k}\ten\alpha_{t_k}\xi)(r) \ dr.$$
Hence,
\begin{align*}&\sum_{j,k=1}^n\la ((1\ten u)\cdot h)(s_j^{-1}s_k,t_j^{-1}t_k),(\tr\ten\alpha)_{(s_j^{-1},t_j^{-1})}(1\ten x_j^*x_k)\ra\\
&=\int_G\sum_{j,k=1}^n\la  u(t_j^{-1}t_k),\alpha_{t_j^{-1}}((\lm_{s_j}\ten\alpha_{t_j}\xi(r))^*x_j^*x_k(\lm_{s_k}\ten\alpha_{t_k}\xi)(r))\ra\\
&=\int_G\sum_{j,k=1}^n\vphi_u\bigg(\alpha_{t_j^{-1}}((\lm_{s_j}\ten\alpha_{t_j}\xi(r))^*x_j^*x_k(\lm_{s_k}\ten\alpha_{t_k}\xi)(r))\sigma(t_j^{-1})\sigma(t_k)\bigg)\\
&=\int_G\vphi_u\bigg(\bigg(\sum_{j=1}^kx_j(\lm_{s_j}\ten\alpha_{t_j}\xi(r))\sigma(t_j)\bigg)^*\bigg(\sum_{k=1}^kx_k(\lm_{s_k}\ten\alpha_{t_k}\xi(r))\sigma(t_k)\bigg)\bigg)\\
&\geq 0.
\end{align*}
It follows from \cite[Proposition 7.6.8]{Ped} (applied to the $C^*$-dynamical system $(\C\ten M_c,G\times G,\tr\ten\alpha)$) that $(1\ten  u)\cdot h\in B((G\times G)\ltimes_f (\C\ten M_c))^+$. In particular, we obtain a well defined linear map 
$$h:B(G\ltimes_f M_c)\ni u\mapsto (1\ten  u)\cdot h\in B((G\times G)\ltimes_f (\C\ten M_c))$$
through the Jordan decomposition. Since $(M_n(\C)\ten M_c,G,\id_{M_n}\ten\alpha)$ and $(M_n(\C)\ten \C\ten M_c,G\times G,\id_{M_n}\ten\tr\ten\alpha)$ are $C^*$-dynamical systems satisfying $M_n(\C)\ten(G\ltimes_f M_c)\cong G\ltimes_f(M_n(\C)\ten M_c)$ and 
$$M_n(\C)\ten((G\times G)\ltimes_f (\C\ten M_c))\cong (G\times G)\ltimes_f(M_n(\C)\ten \C\ten M_c)$$
canonically (by \cite[Lemma 2.75]{W}), and since \cite[Proposition 7.6.8]{Ped} applies to any $C^*$-dynamical system, the matricial analogue of the above argument together with the previous identifications show that the linear map
$$h:B(G\ltimes_f M_c)\ni u\mapsto (1\ten  u)\cdot h\in B((G\times G)\ltimes_f (\C\ten M_c))$$
is completely positive. Moreover, since the left marginal of $h$ is compactly supported, if $ u$ is compactly supported, then so is $(1\ten  u)\cdot h$. Since compactly supported elements of $B((G\times G)\ltimes_f (\C\ten M_c))^+$ lie in $A((G\times G)\ltimes_f (\C\ten M_c))^+$ \cite[Lemma 7.7.6]{Ped}, it follows that $h$ induces a completely positive map
$$h:A(G\ltimes_f M_c)\ni  u\mapsto (1\ten  u)\cdot h\in A((G\times G)\ltimes_f (\C\ten M_c)).$$
Since $A((G\times G)\ltimes_f (\C\ten M_c))^*\cong ((G\times G)\ltimes(\C\ten M_c))^{''}\cong VN(G)\oten (G\bar{\ltimes} M)$, we obtain a completely positive $A(G)$-module map
$$\Phi_h:=h^*:VN(G)\oten (G\bar{\ltimes} M)\rightarrow  G\bar{\ltimes} M,$$
where the module structure on the domain is on the left leg. Moreover,
$$\la\Phi_h(1),u\ra=\la1\ten1,(1\ten u)\cdot h\ra=\la1,u(e)h(e,e)\ra=\la h(e,e),u(e)\ra=\la\alpha(h(e,e)),u\ra$$
for all $u\in A(G\ltimes_f M_c)$, so it follows that
$$\norm{\Phi_h}_{cb}=\norm{\Phi_h(1)}=\norm{\alpha(h(e,e))}=\norm{h(e,e)}=\norm{\la\xi,\xi\ra}.$$

%Note that for any $\Phi\in A(G\ltimes X)$,
%$$\mathrm{Res}|_{\Delta_G}(h_i\cdot(1\ten\Phi))= (S_{h_i|_{\Delta_G}})_*(\Phi),$$
%where $S_{h_i|_{\Delta_G}}$ is the Herz-Schur multiplier of the dynamical system $(C_0(X),G,\alpha)$ associated %to the positive type function $h_i|_{\Delta_G}:G\rightarrow C_0(X)$ (reference). For any $T\in G\ltimes %C_0(X)^{**}$, we therefore have
%$$\la h_i^*(\h{\alpha}(T)),\Phi\ra=\la T,\mathrm{Res}|_{\Delta_G}(h_i\cdot(1\ten\Phi)\ra=\la S_{h_i|%_{\Delta_G}}(T),\Phi\ra\rightarrow\la T,\Phi\ra.$$

\end{proof}

\begin{thm} Let $(M,G,\alpha)$ be a $W^*$-dynamical system with $M$ injective (as a von Neumann algebra). Then the following are equivalent.
\begin{enumerate}
\item $(M,G,\alpha)$ is amenable;
\item $G\bar{\ltimes} M$ is $A(G)$-semi-discrete;
\item $G\bar{\ltimes} M$ is $A(G)$-injective.
\end{enumerate}
\end{thm}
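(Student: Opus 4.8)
The plan is to prove $(1)\Rightarrow(2)\Rightarrow(3)\Rightarrow(1)$. The implication $(2)\Rightarrow(3)$ is immediate: $G\bar{\ltimes}M$ is a von Neumann algebra, hence a dual operator $A(G)$-module, so this is the right-module version of Proposition~\ref{p:semiinj}. For $(3)\Rightarrow(1)$ I would test $A(G)$-injectivity against the dual co-action. The map $\h{\alpha}\colon G\bar{\ltimes}M\to\LG\oten(G\bar{\ltimes}M)$ is a normal injective $*$-homomorphism, hence a completely isometric morphism of right operator $A(G)$-modules once $\LG\oten(G\bar{\ltimes}M)$ is given the module structure induced by the canonical $A(G)$-action on the first ($\LG$) leg via the coproduct $\Gam_s$; that $\h{\alpha}$ intertwines these actions is the co-action identity $(\Gam_s\ten\id)\circ\h{\alpha}=(\id\ten\h{\alpha})\circ\h{\alpha}$. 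By $A(G)$-injectivity the identity morphism extends along $\h{\alpha}$ to a morphism $P\colon\LG\oten(G\bar{\ltimes}M)\to G\bar{\ltimes}M$ with $\norm{P}_{cb}\le 1$ and $P\circ\h{\alpha}=\id$; since $P(1)=P(\h{\alpha}(1))=1$ and $\norm{P}_{cb}=1$, $P$ is unital completely positive, and $\h{\alpha}\circ P$ is an $A(G)$-equivariant conditional expectation onto $\h{\alpha}(G\bar{\ltimes}M)$. The existence of such a co-action-equivariant left inverse to $\h{\alpha}$ characterizes amenability of $(M,G,\alpha)$ (a result of this type is established in \cite{CT}, resting on Anantharaman--Delaroche's equivalent formulations of amenability \cite{AD}), so $(M,G,\alpha)$ is amenable.

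The substance of the theorem is $(1)\Rightarrow(2)$. By the Herz--Schur multiplier characterization of amenable actions \cite{BC} (together with the standard Kolmogorov-type decomposition of completely positive Herz--Schur multipliers), amenability of $(M,G,\alpha)$ supplies a net $(\xi_i)$ in $C_c(G,Z(M)_c)$ with $\norm{\la\xi_i,\xi_i\ra}\le 1$ such that the normal completely positive $A(G)$-module maps $\Phi_{h_i}\circ\h{\alpha}\colon G\bar{\ltimes}M\to G\bar{\ltimes}M$ of Proposition~\ref{p:posdef}, where $h_i(s,t)=\la\xi_i,(\lm_s\ten\alpha_t)\xi_i\ra$, converge to $\id_{G\bar{\ltimes}M}$ in the point-weak$^*$ topology; the $Z(M)$-valuedness of the $\xi_i$ is precisely what renders Proposition~\ref{p:posdef} applicable. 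Next, approximating each $\xi_i$ in the inductive-limit topology by a simple function $\xi=\sum_{k=1}^{n}\mathds{1}_{E_k}\ten z_k$ (with $E_k\subseteq G$ of finite measure and $z_k\in Z(M)_c$) and invoking continuity of $\xi\mapsto\Phi_h$, I may assume each $h_i$ arises from a finite-rank $\xi_i$ of this form. For simple $\xi$ I would then factor $\Phi_h\circ\h{\alpha}$ through the finitely co-generated co-free module $M_{n}(A(G)_+^*)=\mc{CB}(A(G)_+,M_{n})$ as a composition of normal $A(G)$-module maps
$$G\bar{\ltimes}M\ \xrightarrow{\ r_i\ }\ M_{n_i}(A(G)_+^*)\ \xrightarrow{\ s_i\ }\ G\bar{\ltimes}M,$$
where $r_i$ records the coordinates of $\h{\alpha}(X)$ against the indicator pieces $\mathds{1}_{E_k}$ --- equivalently a normal completely bounded \emph{linear} map $G\bar{\ltimes}M\to M_{n_i}$, which determines $r_i$ through the co-free adjunction $\Hom(G\bar{\ltimes}M,\mc{CB}(A(G)_+,M_{n_i}))\cong\mc{CB}(G\bar{\ltimes}M,M_{n_i})$ --- and $s_i$ is assembled from the central elements $z_k$, the restrictions $\Phi_h|_{\LG\ten 1}$, and the co-amplified $A(G)$-action, passed through the projection $M_{n_i}(\LG\oplus_\infty\C)\to M_{n_i}(\LG)$ in the spirit of Lemma~\ref{l:nuc}. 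After normalizing so that $\norm{\Phi_{h_i}\circ\h{\alpha}}_{cb}\le\norm{\la\xi_i,\xi_i\ra}\le 1$, the diagrams $(r_i,s_i)$ are admissible witnesses whose compositions converge point-weak$^*$ to $\id_{G\bar{\ltimes}M}$, so $G\bar{\ltimes}M$ is $A(G)$-semi-discrete.

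I expect the factorization of $\Phi_h\circ\h{\alpha}$ for simple $\xi$ to be the main obstacle: one must realize $r_i$ and especially $s_i$ as genuine \emph{normal $A(G)$-module} maps rather than merely completely bounded linear maps, and it is in exhibiting $s_i$ as a morphism \emph{into} $G\bar{\ltimes}M$ that the injectivity of $M$ is used --- it makes $G\bar{\ltimes}M$ an injective von Neumann algebra (by \cite{AD}), which supplies the extension needed to land back inside the crossed product. One must also check that the inductive-limit approximation of the $\xi_i$ respects normality, complete positivity, and point-weak$^*$ convergence simultaneously. The unifying thread behind both halves is that the amenability data produces an $A(G)$-module left inverse to the dual co-action $\h{\alpha}$ lying in the point-weak$^*$ closure of the normal module maps $\Phi_{h_i}$, and it is precisely these normal approximants that carry the finitely co-generated factorizations required for $A(G)$-semi-discreteness.
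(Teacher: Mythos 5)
Your outline gets the architecture right ($(2)\Rightarrow(3)$ via Proposition \ref{p:semiinj}, the net $(\xi_i)$ from \cite{BC} and the maps $\Phi_{h_i}$ of Proposition \ref{p:posdef} with $\Phi_{h_i}\circ\h{\alpha}=S_i\to\id$ point-weak*), but the crucial step of $(1)\Rightarrow(2)$ --- actually factoring something through a finitely co-generated co-free module $M_n(A(G)_+^*)$ --- is left as an acknowledged ``main obstacle,'' and the route you propose for it is not the one that works. Approximating $\xi_i$ by a simple function $\sum_k \mathds{1}_{E_k}\ten z_k$ does not make $S_i=\Phi_{h_i}\circ\h{\alpha}$ factor through $M_{n_i}(A(G)_+^*)$ in any evident way: the resulting Herz--Schur symbol involves the functions $s\mapsto|E_k\cap sE_l|$, which carry genuinely infinite-dimensional $s$-dependence, so ``recording coordinates against the indicator pieces'' does not produce a normal cb map into $M_{n_i}$ whose co-free adjoint recomposes to $S_i$. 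You have also misplaced where injectivity of $M$ enters: it is not used to extend $s_i$ back into the crossed product, but rather, via \cite[Proposition 3.12]{ADI} and Connes' theorem, to make $G\bar{\ltimes}M$ a semi-discrete von Neumann algebra, i.e.\ to supply diagrams $r_j:G\bar{\ltimes}M\to M_{n_j}$, $s_j:M_{n_j}\to G\bar{\ltimes}M$ with $s_j\circ r_j\to\id$ point-weak*. The paper's construction then amplifies these by $\id_{VN(G)\oplus\C 1}$ to get module maps through $M_{n_j}(VN(G)\oplus_\infty\C 1)=M_{n_j}(A(G)_+^*)$ and sandwiches them between $\Delta_+$ and $\Phi_{h_i}\circ(p_1\ten\id)$; since $(p_1\ten\id)\circ\Delta_+=\h{\alpha}$ and $\Phi_{h_i}\circ\h{\alpha}=S_i$, an iterated limit (first in $j$, then in $i$) gives the semi-discreteness witnesses. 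This is the idea missing from your argument, and it removes any need for the simple-function approximation.

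Two smaller points. In $(3)\Rightarrow(1)$ your sketch produces an $\h{\alpha}$-equivariant conditional expectation and then appeals to \cite{CT}/\cite{AD}; the implication actually used is \cite[Theorem 5.2]{BC2}, and it requires injectivity of $M$ as a hypothesis, which your sketch does not invoke at this step. Finally, note that the $\xi_i$ coming from \cite[Theorem 3.13]{BC} are already in $C_c(G,Z(M)_c)$ with $\la\xi_i,\xi_i\ra=1$, so no further approximation or normalization of the $\xi_i$ is needed before applying Proposition \ref{p:posdef}.
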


\begin{proof} $(1)\Rightarrow (2)$: By the proof of \cite[Theorem 3.13]{BC}, there exists a net $(\xi_i)\in C_c(G,Z(M)_c)$ whose corresponding Herz-Schur multipliers $h_i(s)=\la\xi_i,(\lm_s\ten\alpha_s)\xi_i\ra$ satisfy $h_i(e)=\la\xi_i,\xi_i\ra=1$ for all $i$ and $S_i\rightarrow\id_{G\bar{\ltimes}M}$ point weak*, where $S_i$ are the normal completely positive $A(G)$-module maps on $G\bar{\ltimes}M$ determined by
$$S_i((\alpha\times\lm)(f))=\int_G \alpha(h(s)f(s))(\lm_s\ten 1) \ ds, \ \ \ f\in C_c(G,M_c).$$
(Note the slightly different notation from \cite{BC}.)

Let $h_i$ also denote the function
$$G\times G\ni(s,t)\mapsto \la\xi_i,(\lm_s\ten\alpha_t)\xi_i\ra\in Z(M)_c,$$
and let $\Phi_{h_i}:VN(G)\oten( G\bar{\ltimes} M)\rightarrow G\bar{\ltimes} M$ be the associated completely positive $A(G)$-module map from Proposition \ref{p:posdef}. Since $\norm{\la\xi_i,\xi_i\ra}\leq 1$, each $\Phi_{h_i}$ is completely contractive. We claim that $\Phi_{h_i}\circ\wh{\alpha}=S_i$, where $\wh{\alpha}:G\bar{\ltimes}M\rightarrow VN(G)\oten( G\bar{\ltimes} M)$ is the dual co-action. To this end, let $f\in C_c(G,M_c)$. Then for any $u\in A(G\ltimes_f M_c)$, we have
\begin{align*}\la \Phi_{h_i}(\wh{\alpha}((\alpha\times\lm)(f))),u\ra&=\la \wh{\alpha}((\alpha\times\lm)(f)),((1\ten u)\cdot h_i)\ra\\
&=\int_G\la (1\ten \alpha(f(s)))(\lm_s\ten\lm_s\ten 1) \ ds,(1\ten u)\cdot h_i\ra\\
&=\int_G\la f(s),u(s)\cdot h_i(s,s)\ra \ ds\\
&=\int_G\la h_i(s)f(s),u(s)\ra \ ds\\
&=\int_G\la \alpha(h_i(s)f(s))(\lm_s\ten 1),u\ra \ ds\\
&=\la S_i((\alpha\times\lm)(f)),u\ra.
\end{align*}
Normality then establishes the claim.

Now, since $M$ is injective and $(M,G,\alpha)$ is amenable, $G\bar{\ltimes} M$ is an injective von Neumann algebra \cite[Proposition 3.12]{ADI}. Pick diagrams
\begin{equation*}
\begin{tikzcd}
                     &M_{n_j} \arrow[dr, "s_j"]\\
G\bar{\ltimes} M \arrow[ru, "r_j"] \arrow[rr, "\id"] & &G\bar{\ltimes} M
\end{tikzcd}
\end{equation*}
which approximately commute in the point-weak* topology. We then obtain diagrams
\begin{equation*}
\begin{tikzcd}
(VN(G)\oplus\C1)\oten (G\bar{\ltimes} M) \arrow[r, "\id\ten r_j"]&M_{n_j}(VN(G)\oplus\C1) \arrow[r, "\id\ten s_j"] &(VN(G)\oplus\C1)\oten (G\bar{\ltimes} M)\arrow[d, "\Phi_{h_i}\circ(p_1\ten \id)"]\\
G\bar{\ltimes} M \arrow[u, "\Delta_+"] \arrow[rr, "\id"] & &G\bar{\ltimes} M
\end{tikzcd}
\end{equation*}
of morphisms in $\mathbf{mod}\hskip2pt A(G)_1$. Since $(p_1\ten\id)\circ\Delta_+=\wh{\alpha}$ (as is easily verified), $\Phi_{h_i}\circ\wh{\alpha}=S_i$ for each $i$, and $S_i\rightarrow\id_{G\bar{\ltimes}M}$ point weak*, it follows that the diagrams approximately commute in the point-weak* topology after the appropriate iterated limit (first in $j$ then in $i$). Hence, $G\bar{\ltimes} M$ is $A(G)$-semi-discrete.

$(2)\Rightarrow(3)$ follows immediately from Proposition \ref{p:semiinj}.

$(3)\Rightarrow (1)$: By \cite[Theorem 5.2]{BC2}, $A(G)$-injectivity of $G\bar{\ltimes}M$ together with injectivity of $M$ implies that $(M,G,\alpha)$ is amenable.
\end{proof}

\section{Co-exact modules}\label{s:coexact}

Any finite-dimensional normed space $E$ is isomorphic to a subspace of some $\ell^\infty_n$, as well as a quotient of some $\ell^1_m$. Moreover, for every $\ep>0$, there exists a subspace $S$ of some $\ell^\infty_n$ and a quotient $Q$ of $\ell^1_m$ such that $d(E,S)<1+\ep$ and $d(E,Q)<1+\ep$, where $d$ is the Banach--Mazur distance. The operator space analogues of these properties are false, and their failure is measured by the notions of \textit{exactness} \cite{Pisier} and \textit{co-exactness} \cite[\S4.5]{Webster}. 

A finite-dimensional operator space $E$ is \textit{$\lm$-exact} if for every $\ep>0$, there exists $n\in\N$ and a subspace $S\subseteq M_n$ such that $d_{cb}(E,S)<\lm+\ep$, where $d_{cb}$ is the completely bounded Banach--Mazur distance. For example, $\max \ell^1_3$ and $T_3$ are not 1-exact. Dually, a finite-dimensional operator space $E$ is \textit{$\lm$-co-exact} if for every $\ep>0$, there exists $n\in\N$ and a quotient $Q$ of $T_n$ such that $d_{cb}(E,Q)<\lm+\ep$. Clearly, $E$ is $\lm$-co-exact if and only if $E^*$ is $\lm$-exact. It is known that $\lm$-co-exactness is equivalent to the $\lm$-OLLP of \cite{Ozawa} for finite-dimensional operator spaces (see \cite[Theorem 2.5]{Ozawa}).

Pisier showed that a finite-dimensional operator space $E$ is $\lm$-exact if and only if for any family $(X_i)_{i\in I}$ of operator spaces and any free ultrafilter $\mc{U}$ on $I$, 
$$E\iten \prod_{i\in I} X_i/\mc{U}\cong_\lm\prod_{i\in I} (E\iten X_i)/\mc{U}$$
\cite[Proposition 6]{Pisier}. Dually, it was shown by Dong that a finite-dimensional operator space $E$ is 1-co-exact if and only if for any family $(X_i)_{i\in I}$ of operator spaces and any free ultrafilter $\mc{U}$ on $I$, 
\begin{equation}\label{e:Dong}E\pten \prod_{i\in I} X_i/\mc{U}\cong\prod_{i\in I} (E\pten X_i)/\mc{U}\end{equation}
completely isometrically \cite[Theorem 2.2]{Dong2}. The goal of this section is to show that a similar phenomena to (\ref{e:Dong}) persists at the level of operator modules. We remark that Dong's argument from \cite{Dong2}, which factors through Pisier's characterization of exactness and results from \cite{Ozawa}, does not readily generalize to the module setting.

\begin{defn} Let $A$ be a completely contractive Banach algebra, $E\in\Amod$ and $\lm\geq 1$. $E$ is \textit{$\lm$-co-exact} if for every $\ep>0$ there exists $n\in\N$ and a complete quotient $Q$ of $T_n(A_+)$ with $d_{cb}(Q,E)<\lm+\ep$. 
\end{defn}

\begin{remark} There is an analogous notion of $\lm$-exact operator modules using submodules of finitely co-generated co-free modules of the form $M_n(A_+^*)$, $n\in\N$. A detailed investigation of this (and related) notion(s) will appear in forthcoming work. See the outlook section for more details.
\end{remark}

In what follows we adopt the notation from \cite[\S10.3]{ER} surrounding ultraproducts. By \cite[Proposition 10.3.2]{ER} it follows that for any family $(X_i)_{i\in I}$ in $\modA$, and any free ultrafilter $\mc{U}$ on $I$, the ultraproduct $\prod_{i\in I} X_i/\mc{U}\in\modA$ via 
$$\pi_{\mc{U}}((x_i))\cdot a=\pi_{\mc{U}}((x_i\cdot a)), \ \ \ (x_i)\in\prod_{i\in I} X_i, \ a\in A.$$
Given $Y\in\Amod$, the canonical map
$$\bigg(\prod_{i\in I} X_i/\mc{U}\bigg)\pten_A Y\ni \pi_{\mc{U}}((x_i))\ten_A y\mapsto\pi_{\mc{U}}(x_i\ten_A y)\in \prod_{i\in I} (X_i\pten_A Y)/\mc{U}$$
is completely contractive by the universal property of $\pten_A$. 

In preparation for the lemma below, note that any finite-dimensional operator space $F$ is a complete quotient of $T_\infty:=T_{\N}$ (see notation from section \ref{ss:op}). Indeed, as $F$ is separable there exists a Banach space quotient map $\ell^1\quo F$. Equipping $\ell^1$ with its max operator space structure, this map becomes a complete quotient map $\max\ell^1\quo F$. Composing this with the canonical conditional expectation $T_\infty\quo\max\ell^1$ gives the desired mapping.

\begin{lem}\label{l:diso} Let $F$ be a $d$-dimensional operator space, and let $q:T_\infty\quo F$ be a complete quotient map. For any $0<\ep<1/2$, there exists $n_0\in\N$ such that for all $n\geq n_0$,
$$d_{cb}(F,T_n/\mathrm{Ker}(q_n))\leq d^2\frac{(1+\ep)}{(1-\ep)},$$
where $q_n=q|_{T_n}:T_n\rightarrow F$.
\end{lem}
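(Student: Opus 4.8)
The plan is to approximate the surjection $q:T_\infty \twoheadrightarrow F$ by its restrictions $q_n = q|_{T_n}$ and to quantify how well $T_n/\mathrm{Ker}(q_n)$ approximates $F$ once $n$ is large. The key point is that $q$ being a complete quotient map means that for the unit ball of $M_m(F)$ we can find preimages of norm $< 1 + \ep'$ in $M_m(T_\infty)$, and such preimages, being in $M_m(T_\infty)$, are essentially supported on a finite block $T_n$ up to small error. Since $F$ is finite-dimensional (dimension $d$), I only need to control finitely many "test vectors'' — indeed it suffices to handle a single well-chosen element or a basis — and then a compactness/finite-dimensionality argument upgrades pointwise control to control of $\|\cdot\|_{cb}$ with the stated loss.

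First I would fix a linear isomorphism realizing $F$ and recall the standard estimate that any $d$-dimensional operator space satisfies $d_{cb}(F, \max F) \leq d$ and likewise $d_{cb}(F,\min F)\le d$ (via \cite[\S3.3]{ER}); more to the point, I want to use that $q_n: T_n \to F$ is surjective for $n$ large (the image of $q_n$ is an increasing sequence of subspaces of the finite-dimensional $F$ exhausting $q(T_\infty) = F$, hence stabilizes at $F$ for $n \geq n_1$). So for $n \geq n_1$ the induced map $\widetilde{q_n}: T_n/\mathrm{Ker}(q_n) \to F$ is a completely bounded bijection, and since $q_n$ is a complete contraction, $\|\widetilde{q_n}\|_{cb} \leq 1$. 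Thus $d_{cb}(F, T_n/\mathrm{Ker}(q_n)) \leq \|\widetilde{q_n}^{-1}\|_{cb}$, and everything reduces to bounding $\|\widetilde{q_n}^{-1}\|_{cb}$.

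To bound $\|\widetilde{q_n}^{-1}\|_{cb}$, I would take $[x_{k,l}] \in M_m(F)$ with $\|[x_{k,l}]\| < 1$. Because $q$ is a complete quotient map, there is $[\rho_{k,l}] \in M_m(T_\infty)$ with $q_m([\rho_{k,l}]) = [x_{k,l}]$ and $\|[\rho_{k,l}]\| < 1 + \ep$. Since each $\rho_{k,l} \in T_\infty = \mc{T}(\ell^2)$ is trace class, for $n$ large (depending a priori on $[\rho_{k,l}]$) the compression $P_n [\rho_{k,l}] P_n =: [\rho_{k,l}^{(n)}] \in M_m(T_n)$ satisfies $\|[\rho_{k,l}] - [\rho_{k,l}^{(n)}]\|_1 < \ep$, hence $\|[\rho_{k,l}^{(n)}]\| < 1 + \ep$ as well, and $q_n([\rho_{k,l}^{(n)}])$ differs from $[x_{k,l}]$ by something of norm $< C\ep$ where $C = \|q\|_{cb}\cdot(\text{geometry of }F) \le d^2$ roughly. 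This gives, for the corrected preimage, $\|\widetilde{q_n}^{-1}([x_{k,l}])\| \leq \frac{1+\ep}{1-\ep}$ up to the dimensional factor — the factor $d^2$ entering because passing from "approximate preimage'' to "exact preimage'' in $T_n/\mathrm{Ker}(q_n)$ costs a factor bounded by $d_{cb}(F,\max F)^2 \le d^2$ (one factor for each of the two $d$-dimensional spaces being matched, or equivalently via the inverse mapping theorem applied on the finite-dimensional $F$). The main obstacle — and the reason for the uniform $n_0$ — is that the cutoff $n$ needed in the trace-class approximation depends on $[\rho_{k,l}]$, which ranges over an infinite (indeed all $m$) family; I would resolve this by a compactness argument: it suffices to verify the bound on a finite $\ep$-net of the (compact) unit ball of $M_d(F)$ and to note, via the module/operator-space structure, that $\|\widetilde{q_n}^{-1}\|_{cb}$ for a $d$-dimensional space is already attained (up to $\ep$) at the level $m = d$ — using $\mc{CB}(F, T_n/\mathrm{Ker}(q_n))$-norms are computed against $M_d$ since $\dim F = d$ (see \cite[\S2.2]{ER}, or the identification of $cb$-norms on $d$-dimensional spaces). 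Choosing $n_0$ large enough to handle every element of this finite net simultaneously yields the claimed uniform estimate $d_{cb}(F, T_n/\mathrm{Ker}(q_n)) \leq d^2 \frac{1+\ep}{1-\ep}$ for all $n \geq n_0$.
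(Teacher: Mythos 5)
Your overall strategy (restrict $q$ to $T_n$, note that $q_n$ is eventually surjective, and bound $\norm{\widetilde{q_n}^{-1}}_{cb}$) matches the paper's, and the reduction $d_{cb}(F,T_n/\mathrm{Ker}(q_n))\leq\norm{\widetilde{q_n}^{-1}}_{cb}$ is correct. But the step where you actually bound $\norm{\widetilde{q_n}^{-1}}_{cb}$ has a genuine gap. You correctly identify the obstruction --- the cutoff $n$ in the compression depends on the lift $[\rho_{k,l}]$, which ranges over all matrix levels $m$ --- but your proposed resolution rests on the claim that the $cb$-norm of a map out of a $d$-dimensional operator space is computed at matrix level $m=d$. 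That is not a theorem: Smith's lemma determines $cb$-norms at level $n$ for maps \emph{into} $M_n$, not for maps \emph{from} $d$-dimensional domains. Likewise, the accounting of the factor $d^2$ via ``one factor for each of the two $d$-dimensional spaces'' or ``the inverse mapping theorem'' is not quantitative; the inverse mapping theorem gives no norm control, so as written the constant is not established.

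The repair --- and what the paper does --- is to obtain the two factors of $d$ from two different sources, working only at the Banach-space level until the very end. Choose a normalized Auerbach basis $x_1,\dots,x_d$ of $F$, lift each $x_k$ to $y_k\in T_\infty$ with $\norm{y_k}<1+\ep$, and choose a single $n_0$ with $\norm{P_ny_kP_n-y_k}<\ep/d$ for all $k$ and all $n\ge n_0$; since only $d$ elements need compressing, the uniformity you were worried about is free and no net or compactness argument is needed. The map $p_n(x_k)=P_ny_kP_n$ then satisfies $\norm{p_n}\le d(1+\ep)$ (this is where the first factor of $d$ and the Auerbach basis enter) and $\norm{q\circ p_n-\id_F}\le\ep$, whence $q\circ p_n$ is bijective; writing $\widetilde{q_n}^{-1}(x)=p_n((q\circ p_n)^{-1}x)+\mathrm{Ker}(q_n)$ and estimating $\norm{(q\circ p_n)^{-1}}\le 1/(1-\ep)$ gives $\norm{\widetilde{q_n}^{-1}}\le d(1+\ep)/(1-\ep)$. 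The second factor of $d$ comes from automatic complete boundedness of finite-rank maps, $\norm{\psi}_{cb}\le d\norm{\psi}$ for $\psi$ of rank $d$ (\cite[Corollary 2.2.4]{ER}); this is the correct substitute for your level-$d$ claim and is exactly the result in \cite[\S 2.2]{ER} that you gesture at. With these replacements your outline closes up and coincides with the paper's proof.
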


\begin{proof} Let $\{x_1,...,x_d\}$ be a normalized Auerbach basis of $F$ with dual basis $\{x_1^*,...,x_d^*\}$, meaning $\la x_i^*,x_j\ra=\delta_{i,j}$, $1\leq i,j\leq d$, and $\norm{x_i}=\norm{x_i^*}=1$, $1\leq i\leq d$. Given $0<\ep<1/2$, pick $y_1,...,y_d\in (T_\infty)_{\norm{\cdot}<1+\ep}$ for which $q(y_k)=x_k$, $k=1,..,d$. There exists $n_0\in\N$ such that $\norm{P_ny_kP_n-y_k}<\ep/d$ for all $n\geq n_0$, where $P_n(\cdot)P_n$ is the natural compression onto the $n^{th}$ block. Define $p_n:F\rightarrow T_n$ by $p_n(x_k)=P_ny_kP_n$. Then for any $x=\sum_{k=1}^d \la x_k^*,x\ra x_k\in F$, we have
$$\norm{p_n(x)}\leq\sum_{k=1}^d|\la x_k^*,x\ra|\norm{y_k}\leq(1+\ep)d\norm{x},$$
implying $\norm{p_n}\leq d(1+\ep)$. Moreover, 
$$\norm{q(p_n(x))-x}\leq\sum_{k=1}^d|\la x_k^*,x\ra|\norm{q(p_n(x_k))-x_k}\leq\norm{x}\sum_{k=1}^d\norm{P_ny_kP_n-y_k}<\ep\norm{x},$$
Then, as in the proof of \cite[Proposition 5.3]{ER3}, since $\ep<1/2$, 
$$\norm{x}-\norm{q(p_n(x))}\leq\norm{x-q(p_n(x))}<\frac{1}{2}\norm{x} \ \ \Rightarrow \ \ \norm{q(p_n(x))}\geq\frac{1}{2}\norm{x}.$$
Hence, $q\circ p_n:F\rightarrow F$ is injective and therefore bijective by finite-dimensionality of $F$. It follows that $q_n=q|_{T_n}:T_n\rightarrow F$ is surjective, and therefore $\widetilde{q_n}:T_n/\mathrm{Ker}(q_n)\simeq F$. Moreover, for each $x\in F_{\norm{\cdot}=1}$ there exists a unique $y_x\in F$ satisfying $\widetilde{q_n}^{-1}(x)=p_n(y_x)+\mathrm{Ker}(q_n)$. Then
$$\norm{y_x}-1=\norm{y_x}-\norm{x}\leq\norm{y_x-x}=\norm{y_x-q_n(p_n(y_x))}<\ep\norm{y_x},$$
so that $\norm{y_x}<1/(1-\ep)$. Thus,
$$\norm{\widetilde{q_n}^{-1}(x)}=\norm{p_n(y_x)+\mathrm{Ker}(q_n)}\leq\norm{p_n(y_x)}\leq d\frac{(1+\ep)}{(1-\ep)},$$
implying $\norm{\widetilde{q_n}^{-1}}\leq d(1+\ep)/(1-\ep)$. By automatic complete boundedness in finite dimensions \cite[Corollary 2.2.4]{ER} we have $\norm{\widetilde{q_n}^{-1}}_{cb}\leq d^2(1+\ep)/(1-\ep)$.
\end{proof}

Let $I$ be a set and $\mc{U}$ be a free ultrafilter on $I$. It is well-known that the ultrapower functor $\Ban_1\ni X\mapsto X^{\mc{U}}\in\Ban_1$ is exact \cite[Lemma 2.2.g]{CG}. We require a generalization of this fact in the operator module setting. Recall that a morphism $\vphi:X\rightarrow Y$ is \textit{$\lm$-strict} if $\vphi(X)$ is closed and the induced map $\widetilde{\vphi}:X/\mathrm{Ker}(\vphi)\rightarrow \vphi(X)$ satisfies $\norm{\widetilde{\vphi}^{-1}}_{cb}\leq\lm$.

\begin{lem}\label{l:ultra} Let $A$ be a completely contractive Banach algebra. Let $I$ be a set and $\mc{U}$ be a free ultrafilter on $I$. Suppose that for each $i\in I$, the sequence 
\begin{equation*}
\begin{tikzcd}
X_i \arrow[r, "\psi_i"] &Y_i \arrow[r, "\vphi_i"] &Z_i,
\end{tikzcd}
\end{equation*} 
is exact in $\Amod$ with $(\psi_i)$, and $(\vphi_i)$ uniformly completely bounded, $\psi_i$ a $\lm$-strict morphism and $\vphi_i$ a $\mu$-strict morphism for each $i$. Then the ultraproduct sequence
\begin{equation*}
\begin{tikzcd}
\prod_{i\in I}X_i/\mc{U} \arrow[r, "(\psi_i)_\mc{U} "] &\prod_{i\in I}Y_i/\mc{U} \arrow[r, "(\vphi_i)_\mc{U} "] &\prod_{i\in I}Z_i/\mc{U},
\end{tikzcd}
\end{equation*} 
is exact.
\end{lem}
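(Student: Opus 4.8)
The plan is to establish the two inclusions that constitute exactness of the ultraproduct sequence, using the $\lambda$- and $\mu$-strictness hypotheses to produce \emph{uniformly bounded} lifts along $\mc U$. One direction is immediate: since $\vphi_i\circ\psi_i=0$ for every $i\in I$, functoriality of the ultraproduct construction yields $(\vphi_i)_{\mc U}\circ(\psi_i)_{\mc U}=(\vphi_i\circ\psi_i)_{\mc U}=0$, so $\mathrm{Im}((\psi_i)_{\mc U})\subseteq\mathrm{Ker}((\vphi_i)_{\mc U})$. For the nontrivial inclusion I would take $y=\pi_{\mc U}((y_i))\in\mathrm{Ker}((\vphi_i)_{\mc U})$, which by definition of the ultraproduct norm means $\lim_{\mc U}\norm{\vphi_i(y_i)}=0$.

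Since each $\vphi_i$ is $\mu$-strict, the induced isomorphism $\widetilde{\vphi_i}\colon Y_i/\mathrm{Ker}(\vphi_i)\to\vphi_i(Y_i)$ gives $\operatorname{dist}(y_i,\mathrm{Ker}(\vphi_i))=\norm{y_i+\mathrm{Ker}(\vphi_i)}=\norm{\widetilde{\vphi_i}^{-1}(\vphi_i(y_i))}\le\mu\norm{\vphi_i(y_i)}$, so this distance tends to $0$ along $\mc U$. I would then choose, for each $i$, a vector $k_i\in\mathrm{Ker}(\vphi_i)$ with $\norm{y_i-k_i}\le 2\mu\norm{\vphi_i(y_i)}$ (taking $k_i=y_i$ when $y_i\in\mathrm{Ker}(\vphi_i)$, which is closed). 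The family $(k_i)$ stays bounded, since $\norm{k_i}\le\bigl(1+2\mu\sup_i\norm{\vphi_i}_{cb}\bigr)\norm{y_i}$ and $(y_i)$ already lies in the $\ell^\infty$-direct product; moreover $\pi_{\mc U}((k_i))=\pi_{\mc U}((y_i))=y$. Now exactness of the $i$-th sequence gives $\mathrm{Ker}(\vphi_i)=\psi_i(X_i)$, so every $k_i$ admits a genuine preimage, and $\lambda$-strictness of $\psi_i$ bounds $\operatorname{dist}(x_i',\mathrm{Ker}(\psi_i))\le\lambda\norm{k_i}$ over all preimages $x_i'$ of $k_i$; hence one may pick $x_i\in X_i$ with $\psi_i(x_i)=k_i$ and $\norm{x_i}\le\lambda\norm{k_i}+1$. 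The family $(x_i)$ is then bounded, so it defines $x=\pi_{\mc U}((x_i))\in\prod_{i\in I}X_i/\mc U$, and $(\psi_i)_{\mc U}(x)=\pi_{\mc U}((\psi_i(x_i)))=\pi_{\mc U}((k_i))=y$. This yields $\mathrm{Ker}((\vphi_i)_{\mc U})\subseteq\mathrm{Im}((\psi_i)_{\mc U})$, completing the argument.

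The only point needing care — the ``main obstacle'' in an otherwise routine argument — is ensuring that the corrected family $(k_i)$ and the lifts $(x_i)$ remain inside the $\ell^\infty$-direct products $\prod_{i\in I}Y_i$ and $\prod_{i\in I}X_i$; this is precisely where the \emph{uniform} complete boundedness of $(\vphi_i)$ and the uniformity of the constants $\lambda,\mu$ are used. I would note that no operator-space-specific ingredient (e.g.\ local reflexivity) is required, since ``exact'' here refers only to the underlying linear structure; the operator module structure on the ultraproduct and the fact that $(\psi_i)_{\mc U},(\vphi_i)_{\mc U}$ are morphisms in $\Amod$ come directly from \cite[Proposition 10.3.2]{ER}. (A routine $\varepsilon$-variant of the last step — approximating $k_i$ by $\psi_i(x_i)$ to within a null sequence along $\mc U$ — shows the same conclusion persists if ``exact'' is weakened to ``topologically exact'', but we only need the stated version.)
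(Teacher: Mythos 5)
Your proof is correct and follows essentially the same strategy as the paper's: use $\mu$-strictness to replace $y_i$ by a nearby element $k_i\in\mathrm{Ker}(\vphi_i)$, then use exactness of the $i$-th sequence together with $\lambda$-strictness of $\psi_i$ to lift $k_i$ with uniform norm control. The one (welcome) refinement is that by calibrating $\norm{y_i-k_i}$ against $\norm{\vphi_i(y_i)}$, which is a null family along $\mc{U}$, you produce an exact preimage of $\pi_{\mc{U}}((y_i))$ in a single pass; the paper instead fixes $\delta>0$, approximates to within $\delta$, and lets $\delta\to 0$, which on its face only places $\pi_{\mc{U}}((y_i))$ in the closure of $\mathrm{Im}((\psi_i)_{\mc{U}})$ and tacitly uses that this image is closed (itself a consequence of the uniform $\lambda$-strictness). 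Your version sidesteps that extra step.
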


\begin{proof} 
%Since $(\psi_i)$ and $(\vphi_i)$ are uniformly bounded, with each $\psi_i$ a complete $\lm$-embedding and each $\vphi_i$ a complete $\mu$-quotient map, it follows from the proof of \cite[Proposition 10.3.2]{ER} that $(\psi_i)_\mc{U}$ is a complete $\lm$-embedding and $(\vphi_i)_\mc{U}$ is a complete $\mu$-quotient map for some. 
If $\pi_\mc{U}(y_i)\in\mathrm{Ker}((\vphi_i)_\mc{U})$, then $\lim_{i\mapsto\mc{U}}\norm{\vphi_i(y_i)}=0$, so for each $\delta>0$, there is some $S\in\mc{U}$ such that
$$\norm{\vphi_{i}(y_{i})}<\frac{\delta}{\mu}, \ \ \ i\in S.$$
Since each $\vphi_i$ is a $\mu$-strict morphism
$$\norm{y_{i}+\mathrm{Ker}(\vphi_{i})}\leq\mu\norm{\vphi_{i}(y_{i})}<\delta, \ \ \ \ i\in S.$$
Pick $k_{i}\in\mathrm{Ker}(\vphi_{i})$ for which $\norm{y_{i}-k_{i}}<\delta$. Since 
$$\mathrm{Ker}(\vphi_{i})=\mathrm{Im}(\psi_{i})\cong_\lm X_{i}/\mathrm{Ker}(\psi_{i}),$$
there exists $x_{i}\in X_{i}$ for which $\psi_{i}(x_{i})=k_{i}$ and
$$\norm{x_{i}+\mathrm{Ker}(\psi_{i})}\leq\lm\norm{k_{i}}<\lm(\delta+\sup_i\norm{y_i}), \ \ \ i\in S.$$
Thus, setting $x_i=x_{i}+h_{i}$ for suitable elements $h_{i}\in\mathrm{Ker}(\psi_{i})$, and setting $x_i=0$ whenever $i\notin S$, we obtain a bounded family $(x_i)\in\prod_i X_i$ for which
$$\norm{(\psi_i)_{\mc{U}}(\pi_{\mc{U}}((x_i)))-\pi_{\mc{U}}((y_i))}<\delta.$$
Since $\delta>0$ was arbitrary, it follows that $\pi_{\mc{U}}((y_i))\in\mathrm{Im}((\psi_i)_\mc{U})$.
\end{proof}

\begin{defn}\label{d:fg} Let $A$ be a completely contractive Banach algebra. A module $E\in\Amod$ is \textit{finitely generated} if there exists a complete quotient morphism $A_+\pten F\twoheadrightarrow E$ for some finite-dimensional operator space $F$.
\end{defn}

\begin{remark} A module $M$ over a unital ring $R$ is finitely generated if there is an epimorphism $R^m\twoheadrightarrow M$ from a finitely generated free $R$-module. 

The proof of Proposition \ref{p:refp} shows that for any finitely generated $E\in\Amod$ (in the sense of Definition \ref{d:fg}), there exists $\lm\geq 1$, $n\in\N$ and a complete $\lm$-quotient morphism $T_n(A_+)\twoheadrightarrow_\lm E$. Thus, any finitely generated module in $\Amod$ is (cb isomorphic to) a quotient of a finitely generated matricially free module. 
\end{remark}

\begin{thm}\label{t:ul} Let $A$ be a completely contractive Banach algebra, $E\in\Amod$ be finitely generated, and $\lm\geq1$. Consider the following conditions: 
\begin{enumerate}
\item for any family $(X_i)_{i\in I}$ in $\modA$, and any free ultrafilter $\mc{U}$ on $I$, the canonical map
$$\Phi_E:\bigg(\prod_{i\in I} X_i/\mc{U}\bigg)\pten_A E\rightarrow\prod_{i\in I} (X_i\pten_A E)/\mc{U}$$
is a complete isomorphism with $\norm{\Phi_E^{-1}}_{cb}\leq\lm$;
\item for any family $(X_i)_{i\in I}$ in $\Amod$, and any free ultrafilter $\mc{U}$ on $I$, the canonical map
$$\Delta:\prod_{i\in I} \Hom(E,X_i)/\mc{U}\rightarrow \Hom(E,\prod_{i\in I} X_i/\mc{U})$$
is a complete $\lm$-embedding;
\item $E\in\Amod$ is $\lm$-co-exact.
\end{enumerate}
Then $(1)\Rightarrow(2)\Rightarrow (3)$. If, in addition, $E$ is projective, the conditions are equivalent.
\end{thm}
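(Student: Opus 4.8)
The structure will be a cycle of implications $(1)\Rightarrow(2)\Rightarrow(3)$ unconditionally, and $(3)\Rightarrow(1)$ under the projectivity hypothesis. For $(1)\Rightarrow(2)$ the idea is to dualise: given a family $(X_i)_{i\in I}$ in $\Amod$, apply hypothesis $(1)$ to the family of dual modules $(X_i^*)_{i\in I}$ in $\modA$. Using the duality $\mc{CB}(Z,Y^*)_A\cong(Z\pten_A Y)^*$ (from section~\ref{ss:catnotions}) together with the standard identification of the dual of an ultraproduct as an ``ultraproduct-like'' object (more precisely, the canonical complete contraction $\prod_i X_i^*/\mc{U}\hookrightarrow(\prod_i X_i/\mc{U})^*$), one translates the statement that $\Phi_E$ is a complete isomorphism with $\norm{\Phi_E^{-1}}_{cb}\le\lm$ into the statement that $\Delta$ is a complete $\lm$-embedding. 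The one subtlety here is keeping straight which leg carries the module action; since $E\in\Amod$ and the $X_i\in\Amod$, we need $\Hom(E,X_i)={_A}\mc{CB}(E,X_i)\cong N_i^\perp\subseteq (X_i^*\pten_A E)^*$, and the ultraproduct bookkeeping then gives $(2)$.

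For $(2)\Rightarrow(3)$: since $E$ is finitely generated, Proposition~\ref{p:refp} (and the remark following Definition~\ref{d:fg}) gives $\mu\ge1$, $n\in\N$ and a complete $\mu$-quotient morphism $q:A_+\pten T_n\twoheadrightarrow E$; equivalently $T_n(A_+)\twoheadrightarrow_\mu E$. I want to produce, for every $\ep>0$, a quotient $Q$ of some $T_N(A_+)$ with $d_{cb}(Q,E)<\lm+\ep$. The strategy follows the operator-space template: realise $E$ as a quotient of the ``matricially free'' module $A_+\pten T_\infty$ via some complete quotient map $p:A_+\pten T_\infty\twoheadrightarrow E$ (which exists, combining Lemma~\ref{l:refp}-type reasoning with the observation preceding Lemma~\ref{l:diso} that every finite-dimensional operator space is a complete quotient of $T_\infty$), and consider the truncations $p_N:=p|_{A_+\pten T_N}$ and the induced maps $\widetilde{p_N}:(A_+\pten T_N)/\mathrm{Ker}(p_N)\to E$. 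Hypothesis $(2)$, applied with $X_i=A_+\pten T_i$ and $E$ as above, forces the $\widetilde{p_N}^{-1}$ to be uniformly completely bounded along $\mc{U}$ by $\lm$ (up to $\ep$); this is the module analogue of the argument in \cite{Dong2} via \cite[Proposition~10.3.2]{ER}, where $\Delta$ being a complete $\lm$-embedding measures exactly how well $\Hom(E,\cdot)$ commutes with ultraproducts, and the defect is controlled by the completely bounded Banach--Mazur distance of $E$ to the $(A_+\pten T_N)/\mathrm{Ker}(p_N)$. Then $Q:=(A_+\pten T_N)/\mathrm{Ker}(p_N)=T_N(A_+)/\mathrm{Ker}(p_N)$ works for $N$ chosen in a suitable element of $\mc{U}$.

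For $(3)\Rightarrow(1)$ under projectivity: suppose $E$ is $\lm$-co-exact, so for each $\ep>0$ there is $N$ and a complete quotient map $T_N(A_+)\twoheadrightarrow Q$ with $d_{cb}(Q,E)<\lm+\ep$. First handle the ``matricially free'' building block: for $F=A_+\pten T_N$ one checks directly that $\Phi_F$ is a complete isometric isomorphism, using $(A_+\pten T_N)\pten_A Y\cong T_N(Y)$ (via the identifications in section~\ref{ss:op} and equation~(\ref{e:A_+})), the fact that $T_N$ is finite-dimensional, and that $M_N(\cdot)$ — hence $T_N(\cdot)$ — commutes with ultraproducts isometrically (\cite[Proposition~10.3।]{ER}-type statements). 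Next, since $E$ is $\lm$-co-exact and \emph{projective}, for $\ep>0$ pick $q:F=A_+\pten T_N\twoheadrightarrow E$ a complete $(\lm+\ep)$-quotient morphism; by $\lm'$-projectivity of $E$ with $\lm'$ close to $1$ — wait, rather: projectivity of $E$ lets us split $q$, i.e.\ choose $s:E\to F$ with $q\circ s=\id_E$ and $\norm{s}_{cb}$ small — combined with co-exactness we get $F$ with $q\circ s=\id_E$, $\norm{q}_{cb}\le 1$, $\norm{s}_{cb}\le\lm+2\ep$. Applying $\pten_A(\prod_i X_i/\mc{U})$ and $\pten_A X_i$ respectively, the maps $q\otimes\id$ and $s\otimes\id$ fit into a naturality square with $\Phi_E$ and $\Phi_F$; since $\Phi_F$ is an isometric isomorphism and the vertical maps are split by a morphism of cb-norm $\le\lm+2\ep$, a diagram chase (using also that $\pten_A$ is functorial and that $\Phi$ is natural in $E$) yields that $\Phi_E$ is a complete isomorphism with $\norm{\Phi_E^{-1}}_{cb}\le\lm+2\ep$; letting $\ep\to0$ and using that the ultraproduct of the fixed data doesn't depend on $\ep$ gives $\norm{\Phi_E^{-1}}_{cb}\le\lm$. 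Here Lemma~\ref{l:ultra} (exactness of ultraproducts of strict sequences) is what guarantees the relevant kernels and ranges behave well so that the diagram chase actually closes.

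\textbf{Main obstacle.} The hardest point is $(2)\Rightarrow(3)$, specifically extracting a \emph{quantitative} distance bound $d_{cb}(Q,E)<\lm+\ep$ from the \emph{qualitative} statement that $\Delta$ is a complete $\lm$-embedding. In the operator space case Dong routes through Pisier's exactness characterisation and Ozawa's OLLP machinery, which as the authors note does not transfer; so one must argue directly, presumably by testing $\Delta$ on the specific family $X_i=A_+\pten T_i$ with the connecting structure coming from truncations of a fixed quotient $A_+\pten T_\infty\twoheadrightarrow E$, and reading off from the failure of $\Delta$ to be an isometry on $\mc{U}$ a uniform lower bound on $\norm{\widetilde{p_N}^{-1}}_{cb}$ along $\mc{U}$ — essentially the module version of Lemma~\ref{l:diso}, but run in reverse. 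Getting the constants to come out as exactly $\lm$ (rather than some power or multiple of $\lm$) will require care with the order of the ultralimit and the $\ep$-quantifiers, and this is where I expect most of the technical work to concentrate.
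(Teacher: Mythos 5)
Your overall architecture is the same as the paper's: $(1)\Rightarrow(2)$ by dualizing and applying $(1)$ to the family $(X_i^*)$, $(2)\Rightarrow(3)$ by testing $\Delta$ against quotients of $T_n(A_+)$ coming from a fixed presentation $A_+\pten T_\infty\twoheadrightarrow E$, and $(3)\Rightarrow(1)$ via a naturality square comparing $\Phi_E$ with $\Phi_{T_N(A_+)}$. Two of the three steps, however, have genuine gaps as written.

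In $(2)\Rightarrow(3)$ you propose to apply hypothesis $(2)$ with $X_i=A_+\pten T_i$, but the morphisms you need to control are the inverses $\vphi_n^{-1}=\widetilde{p_n}^{-1}$, which live in $\Hom(E,Q_n)$ for the quotients $Q_n=(A_+\pten T_n)/\mathrm{Ker}(p_n)$, not in $\Hom(E,A_+\pten T_n)$; without projectivity there is no way to lift them, so the test family must be $(Q_n)$ itself. Moreover, to form the element $\pi_{\mc{U}}((\vphi_n^{-1}))$ of the ultraproduct of Hom-spaces one first needs an a priori uniform bound on $\norm{\vphi_n^{-1}}_{cb}$ along $\mc{U}$; this is precisely what Lemma \ref{l:diso} supplies (a bound of order $d^2$ for large $n$), and it is absent from your outline. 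Finally, the embedding property of $\Delta$ only yields $\lim_{n\to\mc{U}}\norm{\vphi_n^{-1}}_{cb}\leq\lm$ after one proves that $\Delta(\pi_{\mc{U}}((\vphi_n^{-1})))=(\vphi_n)_{\mc{U}}^{-1}\circ\Delta$ (with $\Delta:E\to\prod_n E/\mc{U}$ the diagonal embedding) is completely contractive; the paper does this by lifting $x\in E_{\norm{\cdot}<1}$ to $y\in(A_+\pten T_\infty)_{\norm{\cdot}<1}$ and compressing by $1\ten P_n$. This computation is the heart of the implication and is missing; your phrase about extracting ``a uniform lower bound on $\norm{\widetilde{p_N}^{-1}}_{cb}$'' also has the inequality pointing the wrong way --- the embedding property gives an \emph{upper} bound.

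In $(3)\Rightarrow(1)$ your norm bookkeeping does not close: projectivity of $E$ means $\mu$-projectivity for some $\mu\geq1$, so a section $s$ of the complete $(\lm+\ep)$-quotient $q:T_N(A_+)\twoheadrightarrow E$ can only be taken with $\norm{s}_{cb}<\mu(\lm+\ep)+\ep$, and the chain $\norm{u}\leq\norm{s}_{cb}\,\norm{\Phi_E(u)}$ then gives $\norm{\Phi_E^{-1}}_{cb}\leq\mu\lm$, not $\lm$; the claimed bound $\norm{s}_{cb}\leq\lm+2\ep$ is unjustified unless $E$ is $1$-projective. The repair --- and this is what the paper does --- is to use the splitting (equivalently the cb projection $\psi=\id-s\circ q$ onto $\mathrm{Ker}(q)$, fed into Lemma \ref{l:ultra}) only to prove that $\Phi_E$ is \emph{injective}, and to obtain the quantitative bound separately from the fact that $\Phi_E$ is a complete $(\lm+\ep)$-quotient map, which it inherits from the bottom row of the naturality square because each $\id_{X_i}\ten q$ is a $(\lm+\ep)$-complete quotient and ultraproducts preserve complete quotient constants. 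With those corrections your outline matches the published argument.
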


\begin{proof} $(1)\Rightarrow(2)$: Let $I$ be a set, $(X_i)$ be a family in $\Amod$ and $\mc{U}$ be a free ultrafilter on $I$. By assumption, the canonical map 
$$\Phi_E:\bigg(\prod_{i\in I} X_i^*/\mc{U}\bigg)\pten_A E\rightarrow\prod_{i\in I}(X_i^*\pten_A E)/\mc{U}$$
is a complete isomorphism with $\norm{\Phi_E^{-1}}_{cb}\leq\lm$. Let 
$$T:\Hom\bigg(E,\prod_{i\in I}X_i/\mc{U}\bigg)\rightarrow\bigg(\prod_{i\in I}(X_i^*\pten_A E)/\mc{U}\bigg)^*$$
denote the following composition
\begin{align*}\Hom\bigg(E,\prod_{i\in I}X_i/\mc{U}\bigg)&\hookrightarrow\Hom\bigg(E,\prod_{i\in I}X_i^{**}/\mc{U}\bigg)\\
&\hookrightarrow\Hom\bigg(E,\bigg(\prod_{i\in I}X_i^*/\mc{U}\bigg)^*\bigg)\\
&\cong\bigg(\bigg(\prod_{i\in I}X_i^*/\mc{U}\bigg)\pten_A E\bigg)^*\\
&\xrightarrow{(\Phi_E^{-1})^*}\bigg(\prod_{i\in I}(X_i^*\pten_A E)/\mc{U}\bigg)^*.
\end{align*}
Then $\norm{T}_{cb}\leq\lm$.

By \cite[Proposition 10.3.2]{ER} and \cite[Corollary 10.3.4]{ER}, the composition
$$I:\prod_{i\in I}\Hom(E,X_i)/\mc{U}\hookrightarrow \prod_{i\in I}\Hom(E,X_i^{**})/\mc{U}= \prod_{i\in I}(X_i^*\pten_A E)^*/\mc{U}\hookrightarrow \bigg(\prod_{i\in I}(X_i^*\pten_A E)/\mc{U}\bigg)^*$$
is a complete isometry. The following diagram is easily seen to commute
\begin{equation*}
\begin{tikzcd}
\prod_{i\in I}\Hom(E,X_i)/\mc{U}\arrow[r, hook, "I"]\arrow[d, "\Delta"] & \bigg(\prod_{i\in I}(X_i^*\pten_A E)/\mc{U}\bigg)^*\arrow[d, equal]\\
\Hom\bigg(E,\prod_{i\in I}X_i/\mc{U}\bigg)\arrow[r, "T"] & \bigg(\prod_{i\in I}(X_i^*\pten_A E)/\mc{U}\bigg)^*,
\end{tikzcd}
\end{equation*}
where $\Delta$ denotes (right) composition with the canonical embedding $\Delta:E\hookrightarrow E^{\mc{U}}=\prod_{i\in I}E/\mc{U}$ (see \cite[Lemma 10.3.1]{ER}). Then for any $[(\vphi^{k,l}_i)_\mc{U}]\in M_n\bigg(\prod_{i\in I}\Hom(E,X_i)/\mc{U}\bigg)$, we have
$$\norm{[(\vphi^{k,l}_i)_\mc{U}]}=\norm{[I((\vphi^{k,l}_i)_\mc{U})]}=\norm{[T(\Delta((\vphi^{k,l}_i)_\mc{U}))]}\leq\lm\norm{[\Delta((\vphi^{k,l}_i)_\mc{U})]}.$$
Hence, $\Delta$ is a complete $\lm$-embedding.

$(2)\Rightarrow(3)$: Since $E$ finitely generated there is a finite-dimensional operator space $F$ and a complete quotient morphism $\vphi:A_+\pten F\twoheadrightarrow E$. Pick a complete quotient map $q:T_\infty\quo F$, and let $q_n:=q|_{T_n}:T_n\rightarrow F$. Fix some $0<\ep<1/2$. By Lemma \ref{l:diso}, there exists $n_0\in\N$ such that $\norm{\widetilde{q_n}^{-1}}_{cb}\leq d^2(1+\ep)/(1-\ep)\leq 3d^2$ for all $n\geq n_0$. 

Let $K_n=\mathrm{Ker}(\vphi\circ(\id_{A_+}\ten q_n))$ and $Q_n=(A_+\pten T_n)/K_n$. Since $K_n=(\id_{A_+}\ten q_n)^{-1}(\mathrm{Ker}(\vphi))$, the standard argument shows that
$$\vphi\circ(\id_{A_+}\ten q_n):A_+\pten T_n\rightarrow E$$ 
induces a complete isomorphism $\vphi_n:Q_n\cong E$ with $\norm{\vphi_n^{-1}}_{cb}\leq 3d^2$ for all $n\geq n_0$. 

Put $\vphi_n=0$ (and $\vphi_n^{-1}=0$) for all $n<n_0$, and let $\mc{U}$ be a free ultrafilter on $\N$. Since the $\vphi_n$ have uniformly completely bounded inverses asymptotically,
$$(\vphi_n)_{\mc{U}}:\prod_{n\in\N}Q_n/\mc{U}\cong\prod_{n\in\N}E/\mc{U}$$ 
is a complete isomorphism. By $(2)$, we have
$$\lim_{n\mapsto\mc{U}}\norm{\vphi_n^{-1}}_{cb}=\norm{\pi_{\mc{U}}((\vphi_n^{-1}))}\leq\lm\norm{\Delta((\vphi_n)^{-1}_\mc{U})}_{cb}=\lm\norm{(\vphi_n)_{\mc{U}}^{-1}\circ\Delta}_{cb}.$$ 
However,
$$(\vphi_n)_{\mc{U}}^{-1}\circ\Delta:E\rightarrow\prod_{n\in\N}Q_n/\mc{U}$$
is a contraction: given $x\in E_{\norm{\cdot}<1}$, pick $y\in (A_+\pten T_\infty)_{\norm{\cdot}<1}$ for which
$$x=\vphi((\id_{A_+}\ten q)(y))=\lim_n\vphi((\id_{A_+}\ten q_n)(\Ad(1_{A_+}\ten P_n)(y)))=\lim_n\vphi_n((\Ad(1_{A_+}\ten P_n)(y)).$$
Then $\pi_{\mc{U}}((\Ad(1_{A_+}\ten P_n)(y)+K_n))$ lies in the unit ball of $\prod_{n\in\N}Q_n/\mc{U}$ and 
$$\Delta(x)=(\vphi_n)_{\mc{U}}(\pi_{\mc{U}}((\Ad(1_{A_+}\ten P_n)(y)+K_n))).$$
It follows that $(\vphi_n)_{\mc{U}}^{-1}(\Delta(x))=\pi_{\mc{U}}((\Ad(1_{A_+}\ten P_n)(y)+K_n))$ has norm less than 1. A similar argument shows that $(\vphi_n)_{\mc{U}}^{-1}\circ\Delta$ is completely contractive. Thus, for every $\ep>0$
$$\lim_{n\mapsto\mc{U}}\norm{\vphi_n^{-1}}_{cb}<\lm+\ep,$$
so there exists $n\in\N$ for which $\norm{\vphi_n^{-1}}_{cb}<\lm+\ep$, and $E$ is $\lm$-co-exact.

$(3)\Rightarrow(1)$: Now, suppose that $E$ is $\lm$-co-exact and projective. Then by an argument similar to Example \ref{ex:fp2}, for every $\ep>0$ there is an exact sequence in $\Amod$
\begin{equation*}
\begin{tikzcd}
T_n(A_+) \arrow[r, "\psi"] &T_n(A_+) \arrow[r, two heads, "\vphi"] &E,
\end{tikzcd}
\end{equation*}
for which $\psi$ is a completely bounded projection onto $\mathrm{Ker}(\vphi)$ and $\vphi$ is a complete $(\lm+\ep)$-quotient map. 

Observe that for any $n\in\N$,
\begin{align*}\bigg(\prod_{i\in I} X_i/\mc{U}\bigg)\pten_A (A_+\pten T_n)&=\bigg(\bigg(\prod_{i\in I} X_i/\mc{U}\bigg)\pten_A A_+\bigg)\pten T_n\\
&\cong\bigg(\prod_{i\in I} X_i/\mc{U}\bigg)\pten T_n\\
&\cong\prod_{i\in I} (X_i\pten T_n)/\mc{U} \ \ \ \ \textnormal{(by \cite[Lemma 10.3.8]{ER})}\\
&\cong\prod_{i\in I} (X_i\pten_A (A_+\pten T_n))/\mc{U}.
\end{align*}
We therefore obtain the commutative diagram
\begin{equation}\label{d:diag}
\begin{tikzcd}
& \bigg(\prod_{i\in I} X_i/\mc{U}\bigg)\pten_A T_n(A_+)\arrow[r, "\id\ten\psi"]\arrow[d, equal] & \bigg(\prod_{i\in I} X_i/\mc{U}\bigg)\pten_A T_n(A_+)\arrow[r, two heads, "\id\ten\vphi"]\arrow[d, equal] & \bigg(\prod_{i\in I} X_i/\mc{U}\bigg)\pten_A E\arrow[d, "\Phi"]\\
& \prod_{i\in I} (X_i\pten_A T_n(A_+))/\mc{U}\arrow[r] & \prod_{i\in I} (X_i\pten_A T_n(A_+))/\mc{U}\arrow[r,two heads] &\prod_{i\in I} (X_i\pten_A E)/\mc{U}.
\end{tikzcd}
\end{equation}
Since $(\id_{X_i}\ten\vphi)$ is a $(\lm+\ep)$-complete quotient map for each $i$, it follows from the proof of \cite[Proposition 10.3.2]{ER} that the second arrow in the bottom row is also a $(\lm+\ep)$-complete quotient map. Hence, so too is $\Phi$. Showing that $\Phi$ is injective will prove that
$$d_{cb}\bigg(\bigg(\prod_{i\in I} X_i/\mc{U}\bigg)\pten_A E,\prod_{i\in I} (X_i\pten_A E)/\mc{U}\bigg)\leq\lm+\ep.$$
Since $\ep>0$ was arbitrary, the claim would follow. To that end, Lemma \ref{l:rightexact} implies the sequence
\begin{equation*}
\begin{tikzcd}
X_i\pten_A T_n(A_+) \arrow[r, "\id_{X_i}\ten_A\psi"] &X_i\pten_A T_n(A_+) \arrow[r, two heads, "\id\ten_{X_i}\ten_A\vphi"] & X_i\pten_A E,
\end{tikzcd}
\end{equation*}
is topologically exact for each $i$. Moreover, as $\psi$ is a completely bounded projection, the sequence is exact as $(\id_{X_i}\ten_A\psi)$ is a completely bounded projection onto its closed range. Moreover, as a completely bounded projection,  $(\id_{X_i}\ten_A\psi)$ is a 1-strict morphism for each $i$. It then follows from Lemma \ref{l:ultra} that the bottom row of (\ref{d:diag}) is exact. Since the top row is topologically exact (by Lemma \ref{l:rightexact}),  the same diagram chasing from the proof of Proposition \ref{p:fp} entails the injectivity of $\Phi$. 

\end{proof}

\begin{remark} Inspection of the proof shows that $(3)\Rightarrow(1)$ is valid for any finitely presented module $E\in\Amod$ for which there is an exact sequence
\begin{equation*}
\begin{tikzcd}
T_m(A_+) \arrow[r, "\psi"] &T_n(A_+) \arrow[r, two heads, "\vphi"] &E,
\end{tikzcd}
\end{equation*}
in $\Amod$ with $\vphi$ a complete $\lm$-quotient map and $\psi$ ``stably $\mu$-strict'' in the sense that the amplified morphism $\id_X\ten_A\psi:X\pten_A T_m(A_+)\rightarrow X\pten_A T_n(A_+)$ is $\mu$-strict for any $X\in\modA$. Projectivity is a natural assumption that ensures this feature.
\end{remark}

\section{Outlook}

Several natural lines of investigation are suggested by this work. First and foremost, the dual notion of topological finite co-presentation, its relation to operator module analogues of (weak*) exactness, and connection to exact quantum groups will appear in forthcoming work. 

With the dual notions of finite (co)-presentation at hand, which are suitable analogues of finite dimensionality, one can formulate operator module analogues of more general finite dimensional approximation properties and pursue a Grothendieck type programme in the category of operator modules.

\section*{Acknowledgements}

The original idea which led to this project was formulated during the Fields Institute Thematic Program on Abstract Harmonic Analysis, Banach and Operator Algebras in 2014. The author is grateful for the institute for its hospitality during the program. The author would like to thank Yemon Choi, Matthias Neufang and Nico Spronk for helpful conversations at various points throughout the project. We are also grateful to the anonymous referee whose suggestions improved the presentation of the paper. This work was partially supported by the NSERC Discovery Grant RGPIN-2017-06275.

\end{spacing}

\end{document}